\documentclass[11pt,reqno]{amsart}

\usepackage[T1]{fontenc}
\usepackage[utf8]{inputenc}
\usepackage{lmodern}
\usepackage{microtype}
\usepackage{amsmath,amssymb,mathtools}
\usepackage{mathrsfs}
\usepackage{enumitem}
\usepackage[margin=1.12in]{geometry}
\usepackage{xcolor}
\usepackage[
  colorlinks=true,
  linkcolor=blue!55!black,
  citecolor=green!35!black,
  urlcolor=blue!60!black,
  pdfencoding=auto
]{hyperref}
\usepackage[nameinlink,capitalize,noabbrev]{cleveref}

\allowdisplaybreaks
\numberwithin{equation}{section}

\newtheorem{theorem}{Theorem}[section]
\newtheorem{proposition}[theorem]{Proposition}
\newtheorem{lemma}[theorem]{Lemma}
\newtheorem{corollary}[theorem]{Corollary}

\theoremstyle{definition}
\newtheorem{definition}[theorem]{Definition}
\theoremstyle{remark}
\newtheorem{remark}[theorem]{Remark}

\title[Convexity for Hessian equations]
{A Brunn--Minkowski inequality and Convexity for the 2-Hessian eigenvalue in convex domains}

\author{Jiahuan Li}
\address{Department of Mathematics, University of Science and Technology
of China, Hefei 230026, Anhui Province, China}
\email{jiahuan@mail.ustc.edu.cn}

\author{Xi-Nan Ma}
\address{Department of Mathematics, University of Science and Technology
of China, Hefei 230026, Anhui Province, China}
\email{xinan@ustc.edu.cn}

\author{Guohuan Qiu}
\address{State Key Laboratory of Mathematical Sciences, Academy of Mathematics and Systems Science, Chinese Academy of Sciences, Beijing 100190, China}
\address{Institute of Mathematics, Academy of Mathematics and Systems
Science, Chinese Academy of Sciences, No.\ 55 Zhongguancun East Road,
Beijing 100190, China}
\email{qiugh@amss.ac.cn}

\author{Paolo Salani}
\address{Dipartimento di Matematica e Informatica ``U.\ Dini'',
Universit\`a degli Studi di Firenze, 50134 Firenze, Italy}
\email{paolo.salani@unifi.it}

\hypersetup{
  pdftitle={Convexity for Hessian Equations: Brunn--Minkowski
  Inequalities for sigma2 and a Four-Dimensional sigma3 Counterexample},
  pdfauthor={Jiahuan Li, Xi-Nan Ma, Guohuan Qiu, Paolo Salani},
  pdfsubject={Convexity transforms, Hessian equations, and
  Brunn--Minkowski inequalities},
  pdfkeywords={Hessian equations, Brunn--Minkowski inequality,
  constant rank, nonconvex sublevel sets}
}

\begin{document}

\begin{abstract}
We prove the strict log-concavity of the positive first eigenfunction \(-u\) of the \(2\)-Hessian equation and the strict $1/2$-convexity of the solution for the corresponding torsion problem in smooth bounded uniformly convex
domains in $\mathbb{R}^{n}$. As applications, we establish
the associated Brunn--Minkowski inequalities. We also show that
this transformed-convexity phenomenon fails for \(3\)-Hessian equations by
constructing, in dimension four, a smooth uniformly convex domain whose
admissible zero-boundary solution has a nonconvex sublevel set.
\end{abstract}

\subjclass[2020]{Primary 35J60, 35B06; Secondary 35B50, 35P30, 52A20}

\keywords{Hessian equation, admissible solution, convexity transform,
Hessian eigenvalue, Brunn--Minkowski inequality, convex envelope,
constant-rank theorem, nonconvex sublevel set}

\maketitle

\section{Introduction}\label{sec:introduction}

In 1976, in a legendary paper \cite{brascamp1976extensions}, Brascamp and Lieb established the log-concavity of the fundamental solution of diffusion equation with convex potential in bounded convex domain in $\mathbb{R}^{n}$. This in particular implies the log-concavity of the first Dirichlet eigenfunction of Laplace equation in convex domains, and also the Brunn--Minkowski inequality for the first eigenvalue, that is,
\begin{equation}\label{eq:bm-laplace}
\lambda\bigl((1-t)K_{0}+tK_{1}\bigr)^{-1/2}\geq (1-t)\lambda(K_{0})^{-1/2}+t\lambda(K_{1})^{-1/2},
\end{equation}
where $t\in[0,1]$ and $K_{0}$, $K_{1}$ are nonempty convex bodies in $\mathbb{R}^{n}$. In fact, this inequality holds for all compact connected domains having sufficiently regular boundary. As for the classical Brunn-Minkowski inequality, the equality case of \eqref{eq:bm-laplace} has its own interest, and Jerison \cite{jerison1996direct} pointed out that it is related to uniqueness of the solution for the Minkowski problem about $\lambda$. In \cite{colesanti2005brunn}, Colesanti provides a new proof of \eqref{eq:bm-laplace} for convex bodies, and proves that equality holds if and only if $K_{0}$ is homothetic to $K_{1}$. Simultaneously, Salani \cite{salani2005brunn} proved the Brunn-Minkowski inequality for the Dirichlet eigenvalue of the Monge-Amp\`ere operator and showed that equality again holds if and only if the involved convex sets are homothetic.

Laplacian and Monge-Amp\`ere operators are the extremal cases (corresponding to $k=1$ and $k=n$) of a class of operators known as {\em Hessian operators}: for $k=1,\ldots,n$, and a $C^{2}$ function $u$, the $k$-th Hessian operator $\sigma_{k}(D^{2}u)$ is the $k$-th elementary symmetric function of the eigenvalues of the Hessian matrix $D^2u$ of $u$. More explicitly:
$$
\sigma_k(D^2u)=\sum_{1\leq i_1<\dots<i_k\leq n}\lambda_{i_1}\cdots\lambda_{i_k}\,,
$$
where $\lambda_1,\dots,\lambda_n$ are the eigenvalues of $D^2u$. Notice that $\sigma_k(D^2u)$ can be also defined as the sum of all principal $k\times k$ minors of $D^2u$.
The operator $\sigma_k$, for $k>1$, is fully nonlinear and it is elliptic only when restricted to a suitable class of admissible functions, called {\em $k$-convex functions}:
a function $u\in C^2(\Omega)$ is said {\em $k$-convex} in an open set $\Omega\subset\mathbb{R}^n$ if $\sigma_j(D^2u)\geq 0$ in $\Omega$ for $j=1,\dots,k$.
Notice that $n$-convexity is equivalent to usual convexity in the class of $C^2$ functions, and it implies $k$-convexity for any $k=1,\dots,n$. Similarly to usual convexity, $k$-convexity, for $k=1,\dots, n-1$, can be defined also for $C^2$ sets in $\mathbb{R}^n$ using the symmetric functions of the principal curvatures. For sets, $(n-1)$-convexity coincides with usual convexity. Noticeably, every regular level set of a $k$-convex function is $(k-1)$-convex, with the natural orientation.

Hessian operators give rise to Hessian equations. Following the seminal work of Caffarelli, Nirenberg, and Spruck \cite{caffarelli1985dirichlet}, Dirichlet problems for Hessian equations have been extensively studied. On the geometric side, Guan and Ma \cite{guan2003christoffel} established a full-rank theorem and an existence result for strictly convex solutions of the Christoffel--Minkowski Hessian equation on the sphere. Caffarelli, Guan, and Ma \cite{caffarelli2007constant} subsequently developed a constant-rank theorem for a broad class of fully nonlinear elliptic equations, providing an important tool for the study of convexity. For Neumann boundary conditions, Ma and Qiu \cite{ma2019neumann} derived global a priori estimates up to second order and proved the existence of classical solutions in smooth uniformly convex domains, thereby giving an affirmative answer to a conjecture of Trudinger.
In particular, given $k\in\{2,\dots,n-1\}$ and a smooth uniformly $(k-1)$-convex domain $\Omega\subset\mathbb{R}^n$, the eigenvalue problems for the Laplacian and Monge-Amp\'ere operators are generalized as follows:
\begin{equation}\label{eq:eigenvalue-problem}
\begin{cases}
\sigma_{k}(D^{2}u)=\lambda_k(\Omega)(-u)^{k},\quad u<0 & \text{in }\Omega,\\
u=0 & \text{on }\partial \Omega\,.
\end{cases}
\end{equation}
Wang \cite{wang1994class} proved that, for $1<k<n$, there exists a unique positive eigenvalue $\lambda_k(\Omega)$ for which this problem is solvable, and that the corresponding negative $k$-convex solution $u\in C^{\infty}(\Omega)\cap C^{1,1}(\overline{\Omega})$ is unique up to multiplication by a positive factor.
Equivalently, one may define
\begin{equation}\label{eq:lambda-def}
\lambda_k(\Omega)=
\inf_{0\neq \varphi\in\Phi_0^k(\Omega)}
\frac{-\displaystyle\int_{\Omega}\varphi\,\sigma_{k}(D^{2}\varphi)\,dx}
{\displaystyle\int_{\Omega}|\varphi|^{k+1}\,dx},
\end{equation}
where
\[
\Phi_0^k(\Omega)=\left\{\varphi\in C^{2}(\Omega)\cap C(\overline{\Omega}):
\varphi=0\ \text{on }\partial\Omega,
\ \varphi\ \text{is $k$-convex in }\Omega\right\}.
\]
Clearly, this functional $\lambda_k(\Omega)$ is homogeneous of order $-2k$.

As we said, Laplace equation and Monge-Amp\'ere equation are just particular cases of $k$-Hessian equations, so it is natural to ask if results similar to the ones described at the beginning hold for all the $k$-Hessian equations. In particular, the questions are whether for $n\geq 3$ and $k=2,\dots,(n-1)$ the eigenvalue $\lambda_k$ satisfies a Brunn-Minkowski inequality, and whether $-u$ is log-concave (equivalently, whether $-\log(-u)$ is convex) or shares any other convexity property when $\Omega$ is convex.

A positive answer to these two questions has been given in \cite{ma2008convexity, liumaxu2010cc} and later in \cite{salani2012cc}, but only for $k=2$ and $n=3$. Here we treat the case $k=2$ for $n>3$, proving the following two theorems (for simplicity, we write $\lambda$ in place of $\lambda_2$ in the rest of the paper).

\begin{theorem}\label{thm:strict-convex}
Let $n\geq4$. Assume $K$ is a bounded uniformly convex smooth domain in $\mathbb{R}^{n}$ and let $u\in C^{\infty}(K)\cap C^{1,1}(\overline{K})$ be the unique (up to multiplication by a positive factor) admissible solution of
\begin{equation}\label{eq:s2-eigen}
\begin{cases}
\sigma_{2}(D^{2}u)=\lambda(K)(-u)^{2},\quad u<0 & \text{in }K,\\
u=0 & \text{on }\partial K\,.
\end{cases}
\end{equation}
Then $v=-\log(-u)$ is strictly convex; more precisely,
\[
D^2v>0\qquad\text{in }K.
\]
\end{theorem}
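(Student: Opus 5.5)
We argue by the continuity (deformation) method combined with a constant-rank theorem, the standard route for such convexity results. Write $u=-e^{-v}$. Since $D^2u=e^{-v}(D^2v-Dv\otimes Dv)$, the equation \eqref{eq:s2-eigen} becomes
\[
\sigma_2\bigl(D^2v-Dv\otimes Dv\bigr)=\lambda(K),
\]
with $v\to+\infty$ at $\partial K$. We then proceed in three steps.

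\textbf{Step 1: convexity near the boundary.} Since $K$ is uniformly convex and $u\in C^{1,1}(\overline K)$ is smooth inside with $|Du|>0$ near $\partial K$ (Hopf lemma), we show that $D^2v>0$ in a neighbourhood of $\partial K$. Indeed,
\[
D^2v=\frac{D^2u}{-u}+\frac{Du\otimes Du}{u^2}.
\]
The term $Du\otimes Du/u^2$ is of order $d^{-2}$ in the normal direction, where $d$ is the distance to $\partial K$. In tangential directions, $-u_{\tau\tau}\approx |Du|\,\kappa_\tau>0$ by uniform convexity, divided by $-u\sim d$. The mixed terms are of order $d^{-1}$ and are dominated, because their square is controlled by the product of the normal ($d^{-2}$) and tangential ($d^{-1}$) diagonal entries up to lower-order terms. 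For this to work, the second derivatives of $u$ must have boundary limits; we therefore use the $C^{1,1}$ bound and, if necessary, higher regularity near $\partial K$ (smoothness up to the boundary of the eigenfunction, or an approximation argument).

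\textbf{Step 2: deformation.} Join $K$ to a ball by the uniformly convex smooth domains $K_t=(1-t)B+tK$. For a ball, $u$ is radial, and $v$ is explicitly strictly convex by ODE analysis (or because the $n=3$ style computation applies radially). Wang's results give uniqueness and simplicity of the eigenfunction, so $u_t$ depends continuously on $t$ in $C^\infty_{loc}$, and Step 1 holds uniformly in $t$. Hence, if strict convexity fails for some $t$, there is a first time $t_0$ at which $D^2v_{t_0}\ge0$ in $K_{t_0}$, with a degenerate direction at some interior point.

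\textbf{Step 3: constant rank.} The crucial step, and the main obstacle, is a constant rank theorem for convex solutions of $F(D^2v,Dv)=\sigma_2(D^2v-Dv\otimes Dv)=\lambda$. We would set $\phi=\sigma_{l+1}(D^2v)$, with $l$ the minimal rank, near a point of minimal rank, and show the differential inequality
\[
F^{ij}\phi_{ij}\le C\bigl(\phi+|D\phi|\bigr)
\]
modulo the gradient terms; the strong maximum principle then forces the rank to be constant, which contradicts the full rank near the boundary. For general $F$, the Caffarelli--Guan--Ma criterion requires the structure condition that $(A,p)\mapsto F(A^{-1},p)$ be locally convex in $A$ (or a suitable variant with $p$). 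This condition is exactly what needs checking here: $\sigma_2^{1/2}$ composed with inversion, after the rank-one shift by $Dv\otimes Dv$, is not obviously of the right convexity type. The difficulty is sharper for $n\ge4$, where more negative eigenvalues of $D^2u$ can occur, and it is precisely where the $n=3$ arguments stopped. We would attempt a direct computation in coordinates in which $D^2v$ is diagonal with $v_{11}=\dots=v_{ll}=0$ at the point. We would exploit $v_{ij}\ge0$, $v_{ii}\ge v_i^2$-type relations forced by admissibility, and the identity $\sigma_2(A)=\tfrac12\bigl((\mathrm{tr}A)^2-|A|^2\bigr)$ to control the bad quadratic terms $v_{ij\alpha}$ in the third derivatives. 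If necessary, we would work instead with $\phi$ defined via the minimal eigenvalue in a viscosity sense.
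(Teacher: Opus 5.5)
There is a genuine gap, and it is where you locate ``the main obstacle'': Step~3 is announced but not carried out, and it is the core of the argument. In the Bian--Guan level-set form of the constant rank theorem, which is what applies here, the condition to verify is that for every fixed $p\in\mathbb{R}^n$ and $\mu>0$ the inverse sublevel set
\[
\{A\in S^n_{++}:\ \sigma_2(A^{-1}-p\otimes p)\le\mu\}
\]
is convex. The coordinate computation you describe is only a plan. You do not say how the third-order terms would be absorbed, and in the Caffarelli--Guan--Ma and Bian--Guan analysis it is exactly a convexity condition of this type that controls them.

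The missing idea is as follows. For $p=|p|\alpha\neq0$ and $Q_\alpha=I-\alpha\otimes\alpha$ one has $\sigma_2(A^{-1}-p\otimes p)=\sigma_2(A^{-1})-|p|^2\operatorname{Tr}(Q_\alpha A^{-1})$. Hence the set is $\{H\le|p|^2\}$ with
\[
H(A)=\frac{\sigma_2(A^{-1})}{\operatorname{Tr}(Q_\alpha A^{-1})}-\frac{\mu}{\operatorname{Tr}(Q_\alpha A^{-1})}.
\]
The second term is convex because $1/\operatorname{Tr}(Q_\alpha A^{-1})$ is concave (Proposition~\ref{prop:matrix-convex}). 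The first term equals $\sigma_{n-2}(A)/\operatorname{Tr}(Q_\alpha\operatorname{adj}A)=D_Rg(A)/g(A)$, where $g=D_{Q_\alpha}\det$ is hyperbolic with cone containing $S^n_{++}$ and $R=\tfrac12(I+\alpha\otimes\alpha)>0$. G{\aa}rding's theory (convexity of $D_ep/p$ on the hyperbolicity cone) makes it convex (Theorem~\ref{thm:main}). The case $p=0$ follows by intersecting the sets for $p_\varepsilon=\varepsilon\beta$ as $\varepsilon\downarrow0$. Without this lemma, or a genuine substitute, your constant rank step does not close, and neither does anything else in the proposal.

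Steps~1--2 are also not in usable form. Step~1 needs boundary limits of $D^2u$, uniformly in $t$ for the deformation. Only $u\in C^{1,1}(\overline K)$ is available, and the equation degenerates at $\partial K$ because its right-hand side $\lambda u^2$ vanishes there. There is also a sign slip in your heuristic: the tangential term is $u_{\tau\tau}\approx|Du|\kappa_\tau>0$, not $-u_{\tau\tau}$.

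The paper avoids deformation and boundary asymptotics altogether. It shows that the convex envelope $w$ of $v$ satisfies $\sigma_2(D^2w-Dw\otimes Dw)\le\lambda$ in the admissible viscosity sense. At a contact point $x_0=\sum\theta_\alpha x_\alpha$ one has $Dv(x_\alpha)=p$, and the test Hessian is bounded by the (regularized) harmonic mean of the $D^2v(x_\alpha)$, so the same inverse-sublevel convexity applies. Then $-e^{-w}$ is an admissible supersolution at the principal eigenvalue, and Rayleigh-quotient rigidity forces it to be a multiple of $u$. Hence $w=v$ and $D^2v\ge0$. Finally, the constant rank theorem together with $Dv(K)=\mathbb{R}^n$ excludes every deficient rank; the surjectivity uses only $v\to+\infty$ at $\partial K$.

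So the one indispensable ingredient is the convexity lemma above. The envelope route then needs nothing at the boundary beyond blow-up. Your deformation route would additionally require a boundary analysis that you have not supplied.
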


\begin{theorem}\label{thm:bm}
Let $n\geq4$, and $t\in[0,1]$. Suppose $K_{0},K_{1}$ are bounded uniformly convex smooth domains in $\mathbb{R}^{n}$. Then the functional $\lambda$ satisfies the inequality
\begin{equation}\label{eq:bm-s2}
\lambda\bigl((1-t)K_{0}+tK_{1}\bigr)^{-1/4}\geq (1-t)\lambda(K_{0})^{-1/4}+t\lambda(K_{1})^{-1/4}.
\end{equation}
Moreover, if $0<t<1$, equality holds if and only if $K_{0}$ and $K_1$ are homothetic.
\end{theorem}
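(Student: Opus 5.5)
Theorem~\ref{thm:bm} follows from Theorem~\ref{thm:strict-convex} by the standard route of Brascamp--Lieb, Borell and Colesanti, adapted to \(\sigma_2\) as in Salani's proof for the Monge--Amp\`ere operator. We fix \(t\in(0,1)\) and write \(K_t=(1-t)K_0+tK_1\). This is again a bounded uniformly convex smooth domain, since its support function is \((1-t)h_{K_0}+th_{K_1}\).

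\textbf{Step 1 (test function).} Let \(u_0,u_1\) be the eigenfunctions on \(K_0,K_1\). Set \(v_i=-\log(-u_i)\); these are convex by Theorem~\ref{thm:strict-convex}. Normalize them suitably, using the homogeneity \(\lambda(sK)=s^{-4}\lambda(K)\). The natural normalization is by scaling: replace \(K_i\) by \(K_i/\lambda(K_i)^{-1/4}\), which reduces to \(\lambda(K_0)=\lambda(K_1)=1\) after adjusting \(t\). Then define on \(K_t\) the sup-convolution
\[
w(z)=\sup\Bigl\{\,-\exp\bigl(-[(1-t)v_0(x)+tv_1(y)]\bigr)^{\!-1}\ \text{(appropriately)}\;:\; z=(1-t)x+ty\Bigr\},
\]
more precisely \(-w(z)=\sup\{(-u_0(x))^{1-t}(-u_1(y))^{t}\}\). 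In other words, \(-\log(-w)\) is the infimal convolution of \(v_0,v_1\) with weights \(1-t,t\).

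\textbf{Step 2 (subsolution property).} This is the core computation and the main obstacle. At an interior point \(z\), the sup is attained at interior \(x,y\), and Lagrange multipliers give \(Du_0(x)/u_0(x)=Du_1(y)/u_1(y)\), i.e. \(Dv_0(x)=Dv_1(y)=Dv_t(z)\). Twice-differentiability comes from the strict convexity \(D^2v_i>0\); this is exactly why strictness in Theorem~\ref{thm:strict-convex} is needed. One then has
\[
(D^2 v_t(z))^{-1}=(1-t)(D^2v_0(x))^{-1}+t(D^2v_1(y))^{-1}.
\]
Writing the equation in terms of \(v\), \(D^2u=e^{-v}(D^2v-Dv\otimes Dv)\), it becomes
\[
\sigma_2(D^2v-Dv\otimes Dv)=\lambda.
\]
Using the concavity properties of \(\sigma_2^{1/2}\) (together with the harmonic-mean formula above and the common gradient \(p\)), we need an inequality of the form
\[
F(A_t,p)\ge \min\{F(A_0,p),F(A_1,p)\},
\qquad F(A,p)=\sigma_2(A-p\otimes p)^{1/2},
\]
where \(A_t^{-1}=(1-t)A_0^{-1}+tA_1^{-1}\). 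I would try first to prove that \(A\mapsto \sigma_2(A-p\otimes p)\), viewed as a function of \(A^{-1}\), is quasi-concave on the admissible cone, via the Hessian-of-\(\sigma_2\) formula. The hard part is checking this under the constraint \(D^2v-Dv\otimes Dv\in\Gamma_2\). This yields \(\sigma_2(D^2w)\ge (-w)^2\) in the viscosity sense, with \(w=0\) on \(\partial K_t\).

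\textbf{Step 3 (comparison and equality).} By the variational characterization \eqref{eq:lambda-def} (or a Wang-type comparison for subsolutions), \(\lambda(K_t)\le 1\). Undoing the scaling gives \eqref{eq:bm-s2}. For equality, it forces \(w\) to be a multiple of \(u_t\), hence smooth. Tracing the equalities in the concavity inequality and in the harmonic-mean formula, and using \(D^2v_i>0\), we get \(D^2v_0(x)=D^2v_1(y)\) along corresponding points. It follows that \(u_1\) is a translate/dilate of \(u_0\), so \(K_0\) and \(K_1\) are homothetic. The converse direction is immediate from homogeneity.
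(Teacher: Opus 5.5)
\textbf{There is a genuine gap in Step~2: the inequality you aim for goes the wrong way, and it is false.}

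Your Step~1 and the harmonic-mean identity do agree with the paper's construction. At $p=0$, however, your inequality reads $\sigma_2(H_t)\ge\min\{\sigma_2(H_0),\sigma_2(H_1)\}$ whenever $H_t^{-1}=(1-t)H_0^{-1}+tH_1^{-1}$. For a counterexample, take $t=\tfrac12$, $M>1$, $H_0=\operatorname{diag}(M,M^{-1},1,\dots,1)$ and $H_1=\operatorname{diag}(M^{-1},M,1,\dots,1)$. Then $H_{1/2}=\operatorname{diag}(c,c,1,\dots,1)$ with $c=2M/(M^2+1)<1$, so
$\sigma_2(H_{1/2})=c^2+2(n-2)c+\binom{n-2}{2}<1+(n-2)(M+M^{-1})+\binom{n-2}{2}=\sigma_2(H_0)=\sigma_2(H_1)$.
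A common rescaling normalizes $\sigma_2(H_0)=\sigma_2(H_1)=1$.

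The harmonic mean decreases $\sigma_2$. Concavity of $\sigma_2^{1/2}$ cannot reverse this, because it only controls arithmetic means, and these dominate $H_t$. What is true, and what the proof needs, is quasi-\emph{convexity} of $B\mapsto\sigma_2(B^{-1}-p\otimes p)$ on $S^n_{++}$. That is, the sets $\{A\in S^n_{++}:\sigma_2(A^{-1}-p\otimes p)\le\mu\}$ are convex, so $\sigma_2(H_t-p\otimes p)\le\max_i\sigma_2(H_i-p\otimes p)$.

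For $p=0$ this is the convexity of $A\mapsto\sigma_2(A^{-1})^{1/2}$. For $p\neq0$ it is the new ingredient of the paper. With $\alpha=p/|p|$ and $Q_\alpha=I-\alpha\otimes\alpha$, one has $\sigma_2(A^{-1}-p\otimes p)=\sigma_2(A^{-1})-|p|^2\operatorname{Tr}(Q_\alpha A^{-1})$. One then needs two facts: convexity of $\Phi_\alpha(A)=\sigma_2(A^{-1})/\operatorname{Tr}(Q_\alpha A^{-1})$, and concavity of $1/\operatorname{Tr}(Q_\alpha A^{-1})$. The first is Theorem~\ref{thm:main}, proved by writing $\Phi_\alpha=D_Rg/g$ with $g=D_{Q_\alpha}\det$ hyperbolic. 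Nothing in your proposal supplies this, and it is exactly where the structure of $\sigma_2$ is used.

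\textbf{The wrong direction also breaks the later steps.} Your conclusion $\sigma_2(D^2w)\ge(-w)^2$ would make the Rayleigh quotient of $w$ at least $1$. That gives no upper bound on $\lambda(K_t)$; a positive-supersolution argument would, if anything, give $\lambda(K_t)\ge1$. The correct inequality gives $\sigma_2(D^2w)\le(-w)^2$ pointwise. But $w$ need not be $2$-convex, so it is not admissible in \eqref{eq:lambda-def}. The paper therefore reads the inequality in the admissible-test-function viscosity sense. It then replaces $w$ by the $2$-admissible solution $\varphi\le w$ of $\sigma_2(D^2\varphi)=(-w)^2$, $\varphi=0$ on $\partial K_t$ (Lemma~\ref{lem:ce-replacement}). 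The rigidity part of that lemma is also what shows, in the equality case, that $w$ is a multiple of the eigenfunction.

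Finally, your equality argument has no strict inequality to trace. In the paper, set $A_i=D^2v_i^*(\rho)$, $A=(1-t)A_0+tA_1$, $Q=I-\rho\otimes\rho/|\rho|^2$, $\Phi_Q(B)=\sigma_2(B^{-1})/\operatorname{Tr}(QB^{-1})$ and $f_Q(B)=1/\operatorname{Tr}(QB^{-1})$. The chain
$\Phi_Q(A)\le(1-t)\Phi_Q(A_0)+t\Phi_Q(A_1)=|\rho|^2+(1-t)f_Q(A_0)+tf_Q(A_1)\le|\rho|^2+f_Q(A)=\Phi_Q(A)$
then closes up. It is the \emph{strict} convexity of $\Phi_Q$ that forces $A_0=A_1$. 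That strictness is established for $n\ge4$ via completeness of $D_Q\det$, which fails for $n=3$. From $A_0=A_1$ one gets $D^2v_0^*=D^2v_1^*$, hence $K_0=K_1+a$ after normalization. Your sketch never identifies where the hypothesis $n\ge4$ enters.
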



Our method follows the lines of \cite{colesanti2005brunn, ma2008convexity, liumaxu2010cc, salani2012cc}, and it is based on a crucial algebraic property, see Theorem \ref{thm:main}, which establishes the convexity of a functional on the cone of positive definite symmetric matrices.
\medskip

We also obtain the following theorem, which generalizes to $n\geq 4$ the corresponding result from  \cite{ma2008convexity, liumaxu2010cc, salani2012cc}.
\begin{theorem}\label{thm:mainssss}
Suppose $u\in C^{\infty}(\overline K)$ is the admissible solution of \begin{equation}\label{eq:main-s2}
\sigma_{2}(D^{2}u)=1\quad\text{in }K
\qquad
u=0\quad\text{on }\partial K\,,
\end{equation}
where $K$ is a bounded uniformly convex smooth domain in $\mathbb{R}^{n}$, $n\geq4$. Then
\[
v:=-(-u)^{1/2}
\]
is strictly convex, and the power $1/2$ is sharp.
\end{theorem}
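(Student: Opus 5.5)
The plan is the deformation (continuity) method combined with a constant rank theorem, as in \cite{ma2008convexity, liumaxu2010cc, salani2012cc}. The algebraic input is Theorem \ref{thm:main}, the convexity of a suitable functional on the cone of positive definite matrices.

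\textbf{Step 1: transformed equation.} Set $w=-u>0$ and $v=-w^{1/2}$, so $u=-v^2$ with $v<0$. Then
\[
D^2u=-2\bigl(vD^2v+Dv\otimes Dv\bigr).
\]
Equation \eqref{eq:main-s2} becomes a fully nonlinear equation for $v$, namely
\[
\sigma_2\bigl(-vD^2v-Dv\otimes Dv\bigr)=\tfrac14 .
\]
Write it as $F(D^2v,Dv,v)=1$ and solve it for $D^2v$ in the form
\[
G(D^2v)=h(v,Dv),
\]
with $G$ a concave, elliptic function of $D^2v$ on the positive cone. The exponent $1/2$ is the natural one: near $\partial K$ one has $w\sim d$ (distance to the boundary), so $v\sim -d^{1/2}$. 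The key structural requirement for a constant rank theorem in the style of Caffarelli--Guan--Ma \cite{caffarelli2007constant} is that the map $A\mapsto G(A^{-1})$ (or a related expression involving $v$ and $Dv$) is locally convex on positive definite matrices. I would extract this from Theorem \ref{thm:main}, after checking that the lower-order terms enter with the right sign because $v<0$.

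\textbf{Step 2: constant rank theorem.} Suppose $D^2v\ge0$ in $K$. Let $l$ be the minimal rank of $D^2v$, attained at some point, and take
\[
\phi=\sigma_{l+1}(D^2v)+\frac{\sigma_{l+2}(D^2v)}{\sigma_{l+1}(D^2v)}
\]
near that point. One derives $\Delta_F\phi\le C(\phi+|D\phi|)$, where $\Delta_F$ is the linearized operator. The good negative terms come from the convexity in Step 1 together with the gradient terms. The strong maximum principle then forces $\phi\equiv0$, so the rank is constant. A constant rank $l<n$ is impossible: near the boundary $v$ is strictly convex, which follows from uniform convexity of $K$ and $v\sim -d^{1/2}$ (directly, $D^2v\approx \frac14 d^{-3/2}\nabla d\otimes\nabla d$ plus tangential curvature terms of order $d^{-1/2}$). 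Hence $D^2v>0$.

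\textbf{Step 3: deformation.} Connect $K$ to a ball $K_0$ by a smooth family $K_t=(1-t)K_0+tK$ of uniformly convex domains. On the ball the solution is the radial quadratic $u=c(|x|^2-R^2)$, so $v=-\sqrt c\,(R^2-|x|^2)^{1/2}$, which is strictly convex. The solutions $u_t$ depend continuously on $t$ in $C^2_{\rm loc}$, with uniform boundary behavior. The boundary asymptotics give uniform strict convexity of $v_t$ in a collar of fixed width. Hence the set of $t$ for which $D^2v_t>0$ is open. It is also closed: a limit satisfies $D^2v\ge0$, and Step 2 upgrades this to $D^2v>0$. So the property holds at $t=1$.

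\textbf{Step 4: sharpness of $1/2$.} For exponents $\alpha>1/2$, I would exhibit a domain where $-(-u)^\alpha$ fails to be convex. This fails near flat boundary pieces: take a thin slab-like convex domain, or compare with the one-dimensional profile, using $w\sim d$ along a direction tangent to a nearly flat boundary piece, where concavity of $w^\alpha$ is lost. Alternatively, use a cylinder approximation $K=B^{n-1}\times(-L,L)$ with $L$ large, where the solution is asymptotically a function of fewer variables. Then test the power in the flat direction, showing that $1/2$ is the best exponent for which the transformed function stays convex in the limit.

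\textbf{Main obstacle.} The hardest step is Step 2: verifying that the structural condition (inverse convexity) actually holds for the transformed operator, including the $Dv$ and $v$ terms, for all $n\ge4$. This is exactly where Theorem \ref{thm:main} must be applied precisely, and where I would spend most effort, computing in coordinates where $D^2v$ is diagonal and using $\sigma_1(D^2u)>0$ to control the first-order terms.
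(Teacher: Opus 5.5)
Your global route is a legitimate alternative to the convex-envelope argument the paper defers to: continuity along $K_t=(1-t)K_0+tK$, openness from the boundary collar where $v\sim -d^{1/2}$ is strictly convex, and closedness from constant rank. The genuine gap is the step the paper actually proves for this theorem, namely the structural hypothesis behind Theorem \ref{lem:constant-rank}. You leave it open and partly misformulate it.

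The equation cannot be written as $G(D^2v)=h(v,Dv)$. With $r=D^2v$, $p=Dv$, $z=v$ it reads $z^2\sigma_2(r)+z\operatorname{Tr}(P(p)r)=\tfrac14$, where $P(p)=|p|^2I-p\otimes p$, so gradient and Hessian are coupled. Because the operator depends on $z$, the Caffarelli--Guan--Ma/Bian--Guan theorem needs convexity of $\{(A,z):F(A^{-1},p,z)\le 0\}$ \emph{jointly} in $(A,z)$ for each fixed $p$. Convexity in $A$ with $(v,Dv)$ frozen is not enough.

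Nor does $v<0$ make the lower-order term harmless. With $s=-z>0$ the condition is $s^2\sigma_2(A^{-1})-s|p|^2\operatorname{Tr}(Q_\alpha A^{-1})\le\tfrac14$. Here $-\operatorname{Tr}(Q_\alpha A^{-1})$ is concave in $A$, so the gradient term enters with the unfavourable sign.

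The missing idea is to divide by that trace and scale $z$ out. Writing $A=sB$, the condition becomes $z$-free:
\[
\frac{\sigma_2(B^{-1})-\frac14}{\operatorname{Tr}(Q_\alpha B^{-1})}\le |p|^2,\qquad \alpha=\frac{p}{|p|},\quad Q_\alpha=I-\alpha\otimes\alpha .
\]
The left side is convex by Theorem \ref{thm:main} \emph{together with} the concavity of $B\mapsto 1/\operatorname{Tr}(Q_\alpha B^{-1})$ (Proposition \ref{prop:matrix-convex}), which is needed to absorb the constant $-\tfrac14$. The set $\mathcal K_p$ of such $B$ is therefore convex. The set $\{(sB,-s): s>0,\ B\in\mathcal K_p\}$ is then convex as a cone over a convex set. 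The case $p=0$ follows from $\mathcal K_0=\bigcap_{\varepsilon>0}\mathcal K_{\varepsilon\beta}$. Without this, the good terms you expect in $\Delta_F\phi\le C(\phi+|D\phi|)$ are not available.

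Step 4 would also not work as written; here the paper itself only refers to the literature. None of your three mechanisms sees the threshold $1/2$:
\begin{itemize}
\item A one-dimensional profile is unavailable, since $\sigma_2$ vanishes on rank-one Hessians.
\item The asymptotics $w\sim d$ give $d^\alpha$, which is concave for every $\alpha<1$.
\item In a long cylinder the limit $c(|x'|^2-R^2)$ has $(R^2-|x'|^2)^\alpha$ concave for all $\alpha\le 1$, and it is constant in the flat direction.
\end{itemize}

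What does see it is the scaling invariance $u\mapsto \lambda^{-2}u(\lambda x)$ at a conical point. In the narrow cone $\{x_n>a|x'|\}$ with $a^2>2/(n-2)$, the function $u_0=\frac B2(a^2|x'|^2-x_n^2)$ is admissible with $\sigma_2(D^2u_0)=1$ for a suitable $B>0$. Then $(-u_0)^\alpha$ is homogeneous of degree $2\alpha>1$, hence strictly convex along rays from the vertex, so $-(-u_0)^\alpha$ is not convex.

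To reach the class in the theorem, blow up the solution of a truncated narrow cone at its vertex, using $u_0$ as a lower barrier; this gives a negative $2$-homogeneous limit. Approximating the truncated cone by smooth uniformly convex domains then transfers the failure of convexity to that class for every $\alpha>1/2$.
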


Of course, we can also obtain a Brunn-Minkowski inequality for the corresponding {\em $2$-torsional rigidity}
$$
\tau_2(\Omega)=\left(\int_\Omega u\,dx\right)^2\,,
$$
where $u$ is the solution of \eqref{eq:main-s2}.
\begin{theorem}\label{thm:bmtau}
Let $n\geq4$, and $t\in[0,1]$. Suppose $K_{0},K_{1}$ are bounded uniformly convex smooth domains in $\mathbb{R}^{n}$. Then
\begin{equation}\label{eq:bm-tau}
\tau_2\bigl((1-t)K_{0}+tK_{1}\bigr)^{1/(2n+4)}\geq (1-t)\tau_2(K_{0})^{1/(2n+4)}+t\tau_2(K_{1})^{1/(2n+4)}.
\end{equation}
Moreover, if $0<t<1$, equality holds if and only if $K_{0}$ and $K_1$ are homothetic.
\end{theorem}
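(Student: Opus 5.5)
The plan follows Colesanti's and Salani's scheme for the eigenvalue inequality of Theorem~\ref{thm:bm}, now using Theorem~\ref{thm:mainssss} in place of Theorem~\ref{thm:strict-convex}. Let $u_i$ be the admissible solutions of \eqref{eq:main-s2} in $K_i$, $i=0,1$, and set $v_i=-(-u_i)^{1/2}$; these are convex by Theorem~\ref{thm:mainssss}. Put $K_t=(1-t)K_0+tK_1$ and define the $1/2$-convolution
\[
w(x)=-\Bigl(\sup\bigl\{(1-t)(-u_0(x_0))^{1/2}+t(-u_1(x_1))^{1/2}:\ x=(1-t)x_0+tx_1,\ x_i\in \overline K_i\bigr\}\Bigr)^{2}.
\]
Then $w<0$ in $K_t$ and $w=0$ on $\partial K_t$, and $-(-w)^{1/2}$ is convex, being the infimal convolution of convex functions.

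The main step is a comparison principle: $\sigma_2(D^2w)\ge 1$ in the viscosity sense, with $w$ $2$-convex. At an interior point where the sup is attained at $(x_0,x_1)$, Lagrange multipliers give $D(-u_0)^{1/2}(x_0)=D(-u_1)^{1/2}(x_1)$. Writing $\phi=(-u)^{1/2}$, so that $u=-\phi^2$, the equation reads $F(\phi,D\phi,D^2\phi)=\sigma_2(2\phi D^2\phi+2D\phi\otimes D\phi)=1$. The required inequality then reduces to a concavity property: the map $(\phi,A)\mapsto\sigma_2(2\phi A+2p\otimes p)^{1/2}$, with $p$ fixed and $-A\ge0$, behaves concavely under the Minkowski-type combination of the Hessians of the $v_i$. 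This is exactly the matrix convexity statement of Theorem~\ref{thm:main}, specialised to the torsion transform. I expect this verification to be the main obstacle: one must check that the harmonic-type combination of the Hessians $D^2v_i$, which are strictly convex, combines with the homogeneity of $\sigma_2^{1/2}$ to give $\sigma_2(D^2w)\ge1$. I would first try to redo the proof of Theorem~\ref{thm:main} with the exponent $1/2$ in place of $\log$.

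Once the subsolution property is established, the comparison principle for $\sigma_2$ gives $w\ge u_t$, where $u_t$ is the solution of \eqref{eq:main-s2} in $K_t$. Hence $-u_t\ge -w$, and integrating yields $\int_{K_t}(-u_t)\ge\int_{K_t}(-w)$. The set $\{-w\ge s^2\}$ contains $(1-t)\{-u_0\ge s^2\}+t\{-u_1\ge s^2\}$. Applying the classical Brunn--Minkowski inequality to these superlevel sets and integrating over $s$, as in Borell's argument for $\alpha$-concave functions, gives the Borell--Brascamp--Lieb inequality with index $\alpha=1/2$:
\[
\int(-w)\ge\Bigl((1-t)\bigl(\textstyle\int(-u_0)\bigr)^{\gamma}+t\bigl(\textstyle\int(-u_1)\bigr)^{\gamma}\Bigr)^{1/\gamma},\qquad \gamma=\frac{\alpha}{1+n\alpha}=\frac1{n+2}.
\]
Since $\tau_2=(\int u)^2$, we have $\tau_2^{1/(2n+4)}=(\int(-u))^{1/(n+2)}$, and this is \eqref{eq:bm-tau}.

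For the equality case with $0<t<1$, equality forces equality in the Borell--Brascamp--Lieb step, so the superlevel sets of $-u_0$ and $-u_1$ are homothetic, up to translation, with a common ratio; in particular $K_0$ and $K_1$ are homothetic. Conversely, if $K_1=rK_0+a$, then the scaling $u_1(x)=r^2u_0((x-a)/r)$ gives $\int(-u_1)=r^{n+2}\int(-u_0)$, and both sides of \eqref{eq:bm-tau} coincide. For the forward direction I also want the strict convexity in Theorem~\ref{thm:mainssss}: it gives uniqueness of the maximizing pair $(x_0,x_1)$ and lets us conclude that $w=u_t$, following Colesanti's argument.
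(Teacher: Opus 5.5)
Your architecture is the standard infimal-convolution route the paper invokes, and the homogeneity bookkeeping is right: $-(-w)^{1/2}$ is the infimal convolution $\tilde v$ of $v_0,v_1$, comparison gives $u_t\le w$, and Borell--Brascamp--Lieb with index $1/2$ gives the exponent $1/(n+2)$ for $\int(-u)$, hence $1/(2n+4)$ for $\tau_2$.

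\textbf{The gap: the central step is backwards.} You propose to prove $\sigma_2(D^2w)\ge1$ and then say comparison gives $w\ge u_t$. Comparison runs the other way. If $\sigma_2(D^2w)\ge1=\sigma_2(D^2u_t)$ and $w=u_t=0$ on $\partial K_t$, then $w\le u_t$ (the argument of Lemma~\ref{lem:counter-comparison} with $\sigma_2$ in place of $\sigma_3$). That yields $\int(-w)\ge\int(-u_t)$, which is useless for \eqref{eq:bm-tau}. Nor can your inequality hold in general: since $u_t\le w$ is true, it would force $w\equiv u_t$ for every pair $K_0,K_1$, which is the rigid equality configuration. Your heuristic is reversed for the same reason. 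Concavity of $\sigma_2(2\phi A+2p\otimes p)^{1/2}$ along the combination would give a \emph{lower} bound on $\sigma_2$ at the combined point. What you need is an \emph{upper} bound, i.e.\ that $w$ is a supersolution: whenever $\psi$ touches $w$ from below at $x$ with $D^2\psi(x)\in\Gamma_2$, then $\sigma_2(D^2\psi(x))\le1$. Then comparison does give $u_t\le w$. Only the test functions need be admissible; $w$ itself need not be $2$-convex.

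\textbf{How to get the upper bound.} It comes from a \emph{convexity} in inverse-Hessian variables that the paper already proves, so you need not redo Theorem~\ref{thm:main} with exponent $1/2$.
\begin{enumerate}
\item For $F$ as in \eqref{eq:F-def-torsion}, the proof of Theorem~\ref{lem:constant-rank} shows the set $\mathcal C_p$ of \eqref{eq:torsion-level-set} is convex. Rescaling $A=sB$, $s=-z$, reduces it to a sublevel set of the convex function built from Theorem~\ref{thm:main} and Proposition~\ref{prop:matrix-convex}; then take the perspective.
\item At a contact configuration $x=(1-t)x_0+tx_1$ you have $Dv_0(x_0)=Dv_1(x_1)=p$ and $\tilde v(x)=(1-t)z_0+tz_1$ with $z_i=v_i(x_i)<0$. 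Also $(A_i,z_i)\in\mathcal C_p$ for $A_i=(D^2v_i(x_i))^{-1}$; this uses $D^2v_i>0$ from Theorem~\ref{thm:mainssss}.
\item With $S=(1-t)A_0+tA_1$, convexity of $\mathcal C_p$ gives $F(S^{-1},p,\tilde v(x))\le0$, i.e.\ $4\sigma_2(\tilde v(x)S^{-1}+p\otimes p)\le1$.
\item If $\psi$ touches $w$ from below at $x$, then $\phi=-(-\psi)^{1/2}$ touches $\tilde v$ from below, with $D\phi(x)=p$ and $D^2\phi(x)\le S^{-1}$ by the second-variation argument behind \eqref{eq:ce-harmonic-bound}.
\item Since $-\tilde v(x)>0$, this gives $D^2\psi(x)=-2(\tilde v(x)D^2\phi(x)+p\otimes p)\le-2(\tilde v(x)S^{-1}+p\otimes p)$. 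Monotonicity of $\sigma_2$ on $\Gamma_2$ in positive semidefinite directions yields $\sigma_2(D^2\psi(x))\le1$, exactly as in Lemma~\ref{lem:ce-viscosity} and the proof of Theorem~\ref{thm:bm}.
\end{enumerate}

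\textbf{Equality case.} Equality forces $\int(-u_t)=\int(-w)$, hence $u_t=w$. From there, the layer-cake sketch you give is not enough by itself. You can either cite Dubuc's characterization of equality in Borell--Brascamp--Lieb, or, as in the proof of Theorem~\ref{thm:bm}, close the chain of inequalities and use the strict convexity of Theorem~\ref{thm:main} for $n\ge4$.
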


We finally notice that, from \eqref{eq:bm-s2} and \eqref{eq:bm-tau}, with a standard argument (see \cite{BianchiniSalani}), we can eventually obtain Urysohn-type inequalities for $\lambda$ and $\tau_2$
\begin{theorem}\label{urysohn}
Among bounded smooth uniformly convex domains with given mean width, $\lambda$ is minimized by balls, while $\tau_2$ is maximized by balls.
\end{theorem}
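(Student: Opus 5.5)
The plan is to derive Theorem~\ref{urysohn} from the Brunn--Minkowski inequalities \eqref{eq:bm-s2} and \eqref{eq:bm-tau} by rotational averaging, following \cite{BianchiniSalani}. Both $\lambda$ and $\tau_2$ are invariant under rigid motions, and by \eqref{eq:lambda-def} $\lambda$ is homogeneous of degree $-4$. By scaling $u_{sK}(x)=s^{2}u_K(x/s)$, the torsion functional $\tau_2$ is homogeneous of degree $2(n+2)=2n+4$. Hence the functionals
\[
F(K):=\lambda(K)^{-1/4},\qquad G(K):=\tau_2(K)^{1/(2n+4)}
\]
are $1$-homogeneous, rigid-motion invariant and, by Theorems~\ref{thm:bm} and~\ref{thm:bmtau}, concave with respect to Minkowski addition.

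\textbf{Iterating the inequality.} Let $K$ be a bounded smooth uniformly convex domain and let $\rho_1,\dots,\rho_m\in SO(n)$. Minkowski sums of smooth uniformly convex bodies are again smooth and uniformly convex, since their support functions add. So an induction on $m$ using \eqref{eq:bm-s2} with $t=1/m$, together with $1$-homogeneity, gives
\[
F\Bigl(\tfrac1m\textstyle\sum_{i=1}^m\rho_iK\Bigr)\ \ge\ \tfrac1m\textstyle\sum_i F(\rho_iK)=F(K),
\]
and the same holds for $G$.

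\textbf{Passing to the ball.} By Hadwiger's theorem on rotation means, one can choose rotations so that $K_m:=\frac1m\sum_i\rho_iK$ converges in the Hausdorff metric to the ball $B$ with the same mean width as $K$. Mean width is Minkowski linear and rotation invariant, so every $K_m$ already has the mean width of $K$. It remains to know that $F$ and $G$ are continuous under Hausdorff convergence of convex bodies. Then $F(B)\ge F(K)$, i.e.\ $\lambda(B)\le\lambda(K)$, and $G(B)\ge G(K)$, i.e.\ $\tau_2(B)\ge\tau_2(K)$.

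\textbf{Main obstacle: continuity.} The step I expect to need care is this continuity. For $\tau_2$ I would use comparison. If $K_m\to B$ in the Hausdorff sense, then $(1-\varepsilon)B'\subset K_m\subset(1+\varepsilon)B'$ for suitable concentric copies $B'$ of $B$. Monotonicity of the solution of \eqref{eq:main-s2} with respect to the domain follows from the comparison principle for admissible solutions; the domain $\Omega$ is $(k-1)$-convex, which holds for convex domains. Combined with homogeneity, this pinches $\tau_2(K_m)$ between $(1\pm\varepsilon)^{2n+4}\tau_2(B)$. For $\lambda$, the variational characterization \eqref{eq:lambda-def} makes $\lambda$ monotone decreasing under inclusion. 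The same sandwich with homogeneity of degree $-4$ then pinches $\lambda(K_m)$ between $(1\pm\varepsilon)^{-4}\lambda(B)$. Letting $\varepsilon\to0$ gives continuity along the sequence, which is all that is needed.

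If one also wants uniqueness of the extremal, the equality cases in Theorems~\ref{thm:bm} and~\ref{thm:bmtau} can be used. If $K$ attains the extremum, then equality holds in the first averaging step, where the sum consists of a generic rotation of $K$ and $K$ itself. So $K$ is homothetic to all its rotations, hence a ball.
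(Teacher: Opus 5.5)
Your argument is the standard one the paper invokes through Corollary~2.2 of \cite{BianchiniSalani}: Hadwiger's rotation means, the iterated Brunn--Minkowski inequalities \eqref{eq:bm-s2} and \eqref{eq:bm-tau} for the $1$-homogeneous functionals $\lambda^{-1/4}$ and $\tau_2^{1/(2n+4)}$, and continuity under Hausdorff convergence. The homogeneity degrees, the preservation of mean width and of the smooth uniformly convex class, the sandwich by concentric balls, and the comparison argument for $\tau_2$ are all fine.

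One step does not work as justified: the monotonicity of $\lambda$ under inclusion ``from the variational characterization''. Extending $\varphi\in\Phi_0^2(\Omega_1)$ by zero to $\Omega_2\supset\Omega_1$ gives a function with a concave kink along $\partial\Omega_1$. That function is neither $C^2$ nor $2$-convex, even in the viscosity sense, so it is not an admissible competitor in \eqref{eq:lambda-def} for $\Omega_2$.

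Replace it with a touching argument.

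\begin{itemize}
\item Assume $\overline{\Omega_1}\subset\Omega_2$. The eigenfunction $u_2$ of $\Omega_2$ satisfies $u_2\le -c<0$ on $\overline{\Omega_1}$, so $t u_1\ge u_2$ on $\overline{\Omega_1}$ for small $t>0$.
\item Let $t^*$ be the largest such $t$. Then $t^*u_1-u_2$ attains the minimum $0$ at some $x_0$.
\item The point $x_0$ is interior, because on $\partial\Omega_1$ we have $t^*u_1=0>u_2$. Hence $D^2(t^*u_1)(x_0)\ge D^2u_2(x_0)$.
\item By monotonicity of $\sigma_2$ on $\Gamma_2$,
\[
\lambda(\Omega_1)\bigl(-u_2(x_0)\bigr)^2=\sigma_2\bigl(D^2(t^*u_1)(x_0)\bigr)\ge\sigma_2\bigl(D^2u_2(x_0)\bigr)=\lambda(\Omega_2)\bigl(-u_2(x_0)\bigr)^2,
\]
which gives $\lambda(\Omega_1)\ge\lambda(\Omega_2)$.
\end{itemize}

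Use strict inclusions in your sandwich; enlarging $\varepsilon$ slightly gives them. With that, continuity of $\lambda$ along $K_m$ holds, and the rest of your proof goes through unchanged.
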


Our next result gives a negative answer already for
\((k,n)=(3,4)\), the lowest-dimensional case in which \(\sigma_3\) is
an intermediate Hessian operator rather than the Monge--Amp\`ere
operator.

\begin{theorem}
\label{thm:main-E}
One can find a smooth bounded
uniformly convex domain
\(\Omega\subset\mathbb R^4\) for which the unique smooth
admissible solution \(u\) of
\begin{equation}\label{eq:main-E}
  \begin{cases}
    \sigma_3(D^2u)=1&\text{in }\Omega,\\
    u=0&\text{on }\partial\Omega,\\
    D^2u\in\Gamma_3&\text{in }\Omega
  \end{cases}
\end{equation}
has a nonconvex sublevel region.  More precisely,
\begin{equation}\label{eq:main-E-fixed-level}
  \left\{
    q\in\Omega:
    u(q)<-\frac{3}{125}
  \right\}
\end{equation}
is not convex.
\end{theorem}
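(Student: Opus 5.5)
The idea is to build $\Omega$ as a small perturbation of a degenerate domain, where the solution is explicit and the level $-3/125$ can be checked by hand.

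\textbf{Model solution.} In $\mathbb R^4$ I would pick a convex domain of product type whose admissible solution of $\sigma_3(D^2u)=1$ nearly separates variables. The first candidate is the slab $\{|x_4|<1\}$ with a quadratic-in-$x_4$ profile. The difficulty is that $\sigma_3$ of a matrix with three zero eigenvalues vanishes, so a one-variable profile is never admissible. I would therefore use a family of solutions of the form
\[
u(x)=a|x'|^2+b\,x_4^2-c\qquad\text{on an ellipsoid},
\]
and then look for a nonquadratic profile. A more promising model is a long cylinder $B^3_R\times(-L,L)$, or a thin domain over a two-dimensional cross section. Here the three-variable part is forced to solve a $\sigma_2$- or $\sigma_3$-type equation that is linear in the remaining variable. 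The target is an explicit, or ODE-defined, function $u_0$ which satisfies $\sigma_3(D^2u_0)=1$ with $D^2u_0\in\Gamma_3$ and vanishes on the boundary of a convex (possibly nonsmooth or non-uniformly convex) $\Omega_0$. The numbers $3/125=3\cdot 5^{-3}$ suggest scalings by $1/5$ of such an explicit profile. We also want the sublevel set $\{u_0<-3/125\}$ to fail convexity strictly. Concretely, I would exhibit three points $p,q\in\{u_0<-3/125\}$ with midpoint $m=(p+q)/2$ satisfying $u_0(m)>-3/125$ by a definite margin $\delta>0$. This is a quantitative statement at finitely many points.

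\textbf{Approximation.} Next approximate $\Omega_0$ by smooth uniformly convex domains $\Omega_\varepsilon\to\Omega_0$ in Hausdorff distance. Let $u_\varepsilon$ be the admissible solutions given by Caffarelli--Nirenberg--Spruck. By the comparison principle for $\sigma_3$ within $\Gamma_3$, together with $C^0$ stability with respect to the domain, we have $u_\varepsilon\to u_0$ locally uniformly. The standard route is to sandwich $\Omega_0$ between slightly dilated and shrunk copies and use the scaling $u_t(x)=t^2u(x/t)$. Pointwise convergence at $p,q,m$ with error less than $\delta$ then transfers the nonconvexity to $\{u_\varepsilon<-3/125\}$ for small $\varepsilon$, and we take $\Omega=\Omega_\varepsilon$.

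\textbf{Main obstacle.} The hard step is the first one: producing an explicit $k$-admissible solution whose sublevel sets are nonconvex although $\Omega_0$ is convex. Since $\sigma_3$ is genuinely nonlinear, I would attempt an ansatz with nonconvex level structure, such as
\[
u_0(x)=f(x_4)+g(x_4)|x'|^2 .
\]
Substituting reduces $\sigma_3(D^2u_0)=1$ to ODEs for $f$ and $g$, with $g$ allowed to vary, for instance decreasing away from $x_4=0$ so that level sets pinch there. Admissibility in $\Gamma_3$ and the concavity behaviour of the level set $\{f+g|x'|^2=-3/125\}$ in the $(x_4,|x'|)$-plane would then be checked directly. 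If an exact solution of this form cannot be made to vanish on a convex boundary, the fallback is to use it as a sub/supersolution pair on a convex domain. The barriers must be tight enough at $p,q,m$ to certify the strict inequality, and verifying this with explicit constants is where the numerical value $3/125$ would come in.
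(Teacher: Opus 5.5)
\textbf{There is a genuine gap.} Step~1 of your plan asks for an admissible solution of $\sigma_3(D^2u_0)=1$ vanishing on a convex $\Omega_0$ and having a nonconvex sublevel set. That is the theorem itself, up to smoothness of the boundary, and you leave it open. The approximation step is standard but does none of the real work.

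Moreover, the concrete ansatz you propose cannot work. For $u_0=f(x_4)+g(x_4)|x'|^2$, the $3+1$ block formula gives
\[
\sigma_3(D^2u_0)=8g^3+12g^2f''+\bigl(12g^2g''-16g\,g'^2\bigr)|x'|^2 .
\]
Admissibility forces $g>0$: a matrix in $\Gamma_3$ has at least three positive eigenvalues, so by interlacing its $x'$-block $2gI_3$ must be positive. Then $\sigma_3\equiv1$ forces $3gg''=4g'^2$, i.e.\ $g=(a+bx_4)^{-3}$. In particular $g$ is monotone, so it cannot decrease away from $x_4=0$ on both sides as you suggest.

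The sublevel sets are bodies of revolution $\{|x'|<R_c(x_4)\}$ with $R_c^2=P:=\phi G$, where $\phi=c-f$ and $G=(a+bx_4)^3$. Convexity is equivalent to $2PP''-P'^2\le0$, and
\[
2PP''-P'^2=3b^2(a+bx_4)^4\,\phi^2+2G\phi\,(G\phi''+G'\phi')-G^2\phi'^2 .
\]
Since $\phi'=-f'$ and $\phi''=-f''$ do not depend on $c$, at each $x_4$ this is a convex quadratic in $\phi$ that is $\le0$ at $\phi=0$. So if it is $\le0$ at $\phi=-f$ (convexity of $\{u_0<0\}$), it is $\le0$ at every $\phi=c-f\in(0,-f)$, i.e.\ for every $c<0$. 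Also, $f$ is unimodal on the component, because $f''=(1-8g^3)/(12g^2)$ changes sign at most once. Hence, within this ansatz, a convex domain forces every sublevel set to be convex.

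\textbf{The missing idea.} You do not need a fixed limit domain carrying an exact solution. The paper stretches a fixed convex body in one slow direction, $\Omega_\varepsilon=\{(x,y):(\varepsilon x,y)\in\widetilde\Omega\}$. In slow coordinates $X=\varepsilon x$ the block formula gives
\[
\sigma_3\bigl(\mathcal H_\varepsilon[W]\bigr)=\det D_y^2W+\varepsilon^2\bigl(W_{XX}\,\sigma_2(D_y^2W)-(D_yW_X)^TT_1(D_y^2W)\,D_yW_X\bigr).
\]
So to leading order you only need a family of three-dimensional Monge--Amp\`ere solutions $\det D_y^2V=1$ parametrized by $X$. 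Terms affine in $y$ with arbitrary $X$-dependence, such as $L(X)s+C(X)$, do not enter this limiting equation, yet they reshape the level sets in the $(X,s)$-plane.

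The paper takes $V=s^4/12+|z|^2/(2s)+L(X)s+C(X)$, which has $\det D_y^2V\equiv1$. Exact sign checks then make $\{V=0\}$ strictly convex near $X=0$, while $\{V=-3/125\}$ has negative curvature along $\tau$ at $(0,s_*,0,0)$. The value $3/125$ comes from the quartic $125s^4-500s+51=0$, not from a scaling.

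Because $\Omega_\varepsilon$ degenerates as $\varepsilon\to0$, your Hausdorff-stability step is also unavailable. The paper replaces it in two ways. First, it closes the strictly convex band into a smooth uniformly convex $\widetilde\Omega$ that coincides with $\{V<0\}$ on the central channel. Second, it uses the exact barriers $(1\pm K_0\varepsilon^2)V\mp A_0(X-X_0)^2$ on short cylinders, which give $|U_\varepsilon-V|\le C\varepsilon^2$ and carry the midpoint gap over to $u_\varepsilon$.

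Your long-cylinder idea points in this direction. But a straight cylinder has $x_4$-independent convex levels; the cross-sections must vary slowly, and the affine freedom must be exploited.
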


The obstruction in Theorem~\ref{thm:main-E} is stronger than the
failure of the square-root transform.

\begin{corollary}
\label{thm:main-F}
For every solution \(u\) furnished by
Theorem~\ref{thm:main-E}, if \(\Psi\) is strictly increasing on the
range of \(u\), then \(\Psi\circ u\) is not
convex.  In particular,
\[
  -(-u)^\alpha
\]
is not convex for any \(\alpha>0\).
\end{corollary}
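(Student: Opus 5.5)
The corollary should follow from Theorem~\ref{thm:main-E} by an elementary argument on sublevel sets. Convex functions have convex sublevel sets, and a strictly increasing reparametrization of the values only relabels those sublevel sets.

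Fix the domain \(\Omega\subset\mathbb R^4\) and the solution \(u\) from Theorem~\ref{thm:main-E}. Let \(\Psi\) be strictly increasing on the range \(I=u(\Omega)\), and suppose for contradiction that \(w:=\Psi\circ u\) is convex on \(\Omega\). The level \(c=-3/125\) lies in \(I\): the set in \eqref{eq:main-E-fixed-level} is nonconvex, hence nonempty, so \(\min u<c\), while \(u\) is continuous on \(\overline\Omega\) with boundary value \(0>c\).

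The key step is the identity
\[
\{q\in\Omega:\ u(q)<c\}=\{q\in\Omega:\ w(q)<\Psi(c)\}.
\]
This holds because strict monotonicity of \(\Psi\) on \(I\) gives \(u(q)<c\iff\Psi(u(q))<\Psi(c)\) for \(u(q),c\in I\). The right-hand side is a strict sublevel set of a convex function on the convex set \(\Omega\), so it is convex. Indeed, if \(w(p),w(q)<\Psi(c)\), then \(w((1-s)p+sq)\le (1-s)w(p)+sw(q)<\Psi(c)\). This contradicts Theorem~\ref{thm:main-E}, so \(\Psi\circ u\) is not convex.

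For the final claim, take \(\Psi(s)=-(-s)^\alpha\) on \(I\subset(-\infty,0]\). Since \(u<0\) in \(\Omega\), we have \(I\subset(-\infty,0)\), and there \(s\mapsto(-s)^\alpha\) is strictly decreasing for \(\alpha>0\), so \(\Psi\) is strictly increasing. Applying the first part shows \(-(-u)^\alpha\) is not convex for any \(\alpha>0\).

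There is no real obstacle here. The only points needing care are that \(c\) lies in the range of \(u\), so that \(\Psi(c)\) is defined, and the sign bookkeeping for \(\Psi\). All of the substance lies in Theorem~\ref{thm:main-E}.
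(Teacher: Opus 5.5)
Your proposal is correct and matches the paper's proof: you identify \(\{u<c_*\}\) with the strict sublevel set \(\{\Psi\circ u<\Psi(c_*)\}\), use that sublevel sets of convex functions are convex to contradict Theorem~\ref{thm:main-E}, and then specialize to \(\Psi(s)=-(-s)^\alpha\). Your check that \(c_*=-3/125\) lies in the range of \(u\), so that \(\Psi(c_*)\) is defined, is a detail the paper leaves implicit.
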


The rest of the paper is organized as follows. In Section~2 we recall the basic facts about hyperbolic polynomials that will be used throughout the paper. In Section~3 we prove the matrix convexity property underlying the \(2\)-Hessian argument. Section~4 is devoted to the constant-rank theorem for the logarithmic transform, and in Section~5 we combine this theorem with a convex-envelope argument to prove Theorem~\ref{thm:strict-convex}. In Section~6 we prove Theorem~\ref{thm:bm}, Theorem~\ref{thm:mainssss} and derive the corresponding results for the \(2\)-Hessian torsion problem, including the Brunn--Minkowski and Urysohn-type inequalities. Finally, in Section~7 we construct the four-dimensional \(3\)-Hessian counterexample and prove Theorem~\ref{thm:main-E} and Corollary~\ref{thm:main-F}.
\section{Preliminaries}

We first introduce some notation, then we will recall some standard results about hyperbolic polynomials which will be needed later.
\medskip

Given a natural number $n>1$, we denote by $S^n$ the space of $n\times n$ real symmetric matrices; $S^n_+\subset S^n$ is the (closed) cone of positive semidefinite
matrices, while $S^n_{++}\subset S^n_+$ is the (open) cone of positive definite matrices. By $I$ we denote the $n\times n$ identity matrix.

For $A\in S^n$ and $k\in\{0,\dots,n\}$, we denote by $\sigma_k(A)$ the $k$-th symmetric function of the eigenvalues of $A$, that is,
$$
\sigma_0(A)=1,\qquad\sigma_1(A)=\sum_{i=1}^n\lambda_i\,,\qquad \sigma_k(A)=\sum_{1\leq i_1<\dots<i_k\leq n}\lambda_{i_1}\cdots\lambda_{i_k}\quad\text{for }k\geq 2\,,
$$
where $\lambda_1\leq\lambda_2\leq\dots\leq\lambda_n$ are the eigenvalues of $A$.
Notice that $\sigma_1(A)=\text{Tr}(A)$, the trace of $A$, while $\sigma_n(A)=\det A$, the determinant of $A$. The operator $\sigma_k$ can equivalently be defined by the formula
\begin{equation}\label{detexpansion}
\det(A+tI)=\sum_{k=0}^n\sigma_k(A)t^{n-k}\,.
\end{equation}

If $A\in S^n$ is invertible, i.e., $\det(A)\neq 0$, we denote by $\operatorname{adj}(A)$ the adjugate (or classical adjoint) matrix of $A$, that coincides with the transpose of the cofactor matrix of $A$. In the sequel, we will make a decisive use of the following well known formula:
\begin{equation}\label{adjA}
A^{-1}=\frac{\operatorname{adj} A}{\det A}\,.
\end{equation}

\subsection{Hyperbolic polynomials}
For this section, we mainly refer to  the work of G{\aa}rding \cite{garding1959inequality}, and also to \cite{guler1997,bauschke2001hyperbolic,renegar2006hyperbolic,branden2014}.
\begin{definition}[Hyperbolic Polynomials and Hyperbolicity Cones]
Let $p$ be a homogeneous real polynomial on a real vector space $V$. If there exists a direction $e\in V$ such that $p(e)>0$ and, for every $x\in V$, the one-variable polynomial
\[
t\mapsto p(x+te)
\]
has only real roots, then $p$ is said {\em hyperbolic with respect to $e$}.

The connected component of $\{x:p(x)>0\}$ containing $e$ is called the {\em hyperbolicity cone of $p$} and is denoted by $\Gamma_{p}$. Its closure is denoted by $\overline\Gamma_p$.
\end{definition}
If $p$ is hyperbolic with respect to $e$, then it is hyperbolic with respect to every $v\in\Gamma_p$, and the hyperbolicity cone of $p$ with respect to $v$ coincides with $\Gamma_p$, so it does not depend on the direction and it is consistent not to refer to the direction when considering the hyperbolicity cone.

A cornerstone result by G{\aa}rding is that hyperbolicity cones are convex.
\medskip

We recall the standard fact that directional derivatives preserve hyperbolicity, so differentiation is an easy way to produce hyperbolic polynomials.
\begin{theorem}[\cite{branden2014} Lemma 4 ]\label{thm:directional-hyperbolic}
Let $p$ be hyperbolic with respect to $e$, with hyperbolicity cone $\Gamma_{p}$. If $v\in \overline\Gamma_{p}$, then the directional derivative $D_{v}p$ of $p$ in direction $v$ is hyperbolic with respect to $e$ as well, and its hyperbolicity cone contains $\Gamma_{p}$.
\end{theorem}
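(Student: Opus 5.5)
The statement to prove is \cref{thm:directional-hyperbolic}: if $p$ is hyperbolic with respect to $e$ and $v\in\overline\Gamma_p$, then $D_vp$ is hyperbolic with respect to $e$, and its hyperbolicity cone contains $\Gamma_p$. I will reduce everything to one-variable polynomials and use Rolle's theorem, together with a limiting argument for directions on the boundary of $\Gamma_p$.

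\textbf{Step 1: the case $v=e$.} Fix $x\in V$ and let $d=\deg p$. Put $q(t)=p(x+te)$, a real-rooted polynomial of degree $d$ with leading coefficient $p(e)>0$. Then
\[
D_ep(x+te)=q'(t).
\]
By Rolle's theorem, counting multiplicities, $q'$ has $d-1$ real roots. Also $D_ep(e)=d\,p(e)>0$. Hence $D_ep$ is hyperbolic with respect to $e$.

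\textbf{Step 2: the case $v\in\Gamma_p$.} By the fact recalled just before the theorem, $p$ is hyperbolic with respect to $v$, with the same cone $\Gamma_p$. Step 1 with $v$ in place of $e$ shows that $D_vp$ is hyperbolic with respect to $v$.

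To move back to the direction $e$, I first show $\Gamma_p\subset\Gamma_{D_vp}$. For $w\in\Gamma_p$, all roots of $t\mapsto p(w+tv)$ are negative; this is the standard characterization $\Gamma_p=\{w:\ \text{all roots of } p(w+te)\text{ are negative}\}$, valid for any direction in the cone. By Rolle's theorem the roots of the derivative lie between the extreme roots, so they are also negative, and therefore $D_vp(w)>0$.

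Moreover, $\Gamma_p$ is convex and connected and contains $v$. So $\Gamma_p$ lies in the connected component of $\{D_vp>0\}$ containing $v$, which is $\Gamma_{D_vp}$. In particular $e\in\Gamma_{D_vp}$, and applying the direction-independence fact to $D_vp$ gives hyperbolicity with respect to $e$.

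\textbf{Step 3: $v\in\partial\Gamma_p$.} Take $v_j\in\Gamma_p$ with $v_j\to v$. For fixed $x$, the polynomials $t\mapsto D_{v_j}p(x+te)$ are real-rooted and converge coefficientwise to $t\mapsto D_vp(x+te)$. By Hurwitz's theorem, a limit of real-rooted polynomials is real-rooted, unless it vanishes identically or its degree drops.

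Both degenerate cases are excluded once I know $D_vp(e)>0$. For this, use the real roots $\mu_i(v)$ of $p(v+te)$ to write
\[
p(e+sv)=s^d\,p\!\left(v+\tfrac1s e\right)=p(e)\prod_i\bigl(1+s\mu_i(v)\bigr).
\]
Differentiating at $s=0$ gives $D_vp(e)=p(e)\sum_i\mu_i(v)$, where the $\mu_i(v)\ge0$ are the negatives of the roots. These are all $\geq0$ since $v\in\overline\Gamma_p$, and not all zero unless $v=0$; the case $v=0$ is trivial or excluded. Hence $D_vp(e)>0$, and the leading coefficient $D_vp(e)$ of $t\mapsto D_vp(x+te)$ stays nonzero, so the degree does not drop.

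For the cone inclusion at the boundary: $D_{v_j}p>0$ on $\Gamma_p$ passes to the limit as $D_vp\ge0$ there. Strictness follows from the same root-sum formula with base point $w\in\Gamma_p$: $D_vp(w)=p(w)\sum_i\mu_i^{(w)}(v)>0$, because $v\neq0$ lies in $\overline\Gamma_p$. Connectivity of $\Gamma_p$ then gives $\Gamma_p\subset\Gamma_{D_vp}$.

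\textbf{Main obstacle.} The delicate point is Step 3: ruling out degeneration in the Hurwitz limit, and getting strict positivity of $D_vp$ on $\Gamma_p$, both of which rest on the root-sum identity. If this becomes messy, I will instead use the identity
\[
D_vp(x)=\frac{d}{ds}\Big|_{s=0}p(x+sv)
\]
together with the classical fact that $p(x+sv+te)$ is a real-stable bivariate pencil for $v\in\overline\Gamma_p$.
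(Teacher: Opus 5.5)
Your route is the paper's own (Rolle's theorem along lines, then approximation from interior directions), carried out with more care. In particular, your Step~2 supplies the switch from direction $v$ back to $e$ via G{\aa}rding's direction-independence, which the paper's one-line sketch glosses over. Steps~1 and~2 are correct.

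The gap is in Step~3, at the claim that the $\mu_i(v)$ are ``not all zero unless $v=0$''. That assertion is exactly completeness of $p$ in the sense of Definition~\ref{def:complete}, and it is false for a general hyperbolic $p$. Take any $v\neq0$ in the lineality space $\{x:\lambda(x)=0\}$.
\begin{itemize}
\item Such a $v$ lies in $\overline\Gamma_p$: by \eqref{lambdagarding}, $\lambda_i(v+\varepsilon e)=\varepsilon>0$, so $v+\varepsilon e\in\Gamma_p$ by \eqref{Gammaplambda}.
\item By the description of this set recalled just before \eqref{eq:complete-equivalent}, $p(y+tv)=p(y)$ for all $y$ and $t$. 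Hence $D_vp\equiv0$, which is not hyperbolic with respect to $e$.
\end{itemize}
A concrete instance is $p(x_1,x_2)=x_1$ with $e=(1,0)$ and $v=(0,1)\in\partial\Gamma_p$. Your strictness argument for $D_vp(w)>0$ on $\Gamma_p$ relies on the same false claim. Also, $v=0$ is not a ``trivial'' case but another instance of failure. So this step cannot be repaired as written: the theorem, read literally, needs an extra hypothesis.

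What is true is a dichotomy for $v\in\overline\Gamma_p$.
\begin{itemize}
\item Either $\lambda(v)=0$, and then $D_vp\equiv0$.
\item Or $\lambda(v)\neq0$. Then the $\lambda_i(v)\geq0$ are not all zero, and your root-sum identity gives $D_vp(e)=p(e)\sum_i\lambda_i(v)>0$.
\end{itemize}
The lineality space equals $\{x:x+\Gamma_p=\Gamma_p\}$, so it does not depend on which interior direction is used to compute characteristic roots. The same reasoning with base point $w\in\Gamma_p$ therefore gives $D_vp(w)>0$ on all of $\Gamma_p$.

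If you add the hypothesis $D_vp\not\equiv0$ (or that $p$ is complete), the rest of your Step~3 goes through unchanged. Nothing is lost for the paper. It uses the theorem only in two ways: with the interior direction $v=I$ (to produce the $\sigma_k$), where $\lambda_i(v)>0$ and no issue arises; and for $p=\det$ on $S^n$ with $v=P\neq0$. The determinant is complete, since its characteristic roots are the eigenvalues.
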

This follows from Rolle's theorem: along every line one obtains a real-rooted one-variable polynomial, and its derivative is again real-rooted. Directions in the closed cone are obtained by approximation from interior directions.
\medskip

The most important example in this context is the determinant $p(A)=\det A$,  which is a homogeneous polynomial (of degree $n$) on $S^n$ (identified with $\mathbb{R}^{n(n+1)/2}$): it is hyperbolic with respect to the identity matrix $I$, and its hyperbolicity cone is $S^{n}_{++}$.
Then, by repeated applications of Theorem \ref{thm:directional-hyperbolic} and  \eqref{detexpansion}, we obtain that $\sigma_k(A)$ is a hyperbolic polynomial of order $k$, for $k=1,\dots, n$, and $S^n_{++}=\Gamma_n\subset\Gamma_{n-1}\subset\dots\subset\Gamma_1$, where $\Gamma_k$ denotes the cone of hyperbolicity of $\sigma_k$.
\medskip

The following definition recalls another fundamental tool in the theory of hyperbolic polynomials.
\begin{definition}[Characteristic Roots]
Suppose $p$ is hyperbolic with respect to $e\in V$ and has degree $m$.
Then for every $x\in V$, we can write
\begin{equation}\label{deflambda}
p(x+te)=p(e)\prod_{i=1}^{m}\bigl(t+\lambda_i(x)\bigr),
\end{equation}
and assume without loss of generality that
\[
\lambda_1(x)\ge \lambda_2(x)\ge \cdots \ge \lambda_m(x).
\]
The number $\lambda_i(x)$, for $i=1,\dots, m$, is called  the $i$-th largest
\emph{characteristic root} of $x$ (with respect to $p$ and $e$). In other words, the characteristic roots $\lambda_1(x),\dots,\lambda_m(x)$ are the roots of the polynomial $t\to p(x-te)$.
The corresponding map
\[
\lambda:V\longrightarrow \mathbb{R}^m,
\qquad
x\longmapsto
\bigl(\lambda_1(x),\ldots,\lambda_m(x)\bigr),
\]
is called the \emph{characteristic map} (with respect to $p$ and $e$).
\end{definition}
Notice that, in particular, it holds
$$
p(x)=p(e)\prod_{i=1}^m\lambda_i(x)\,.
$$
G{\aa}rding's \cite{garding1959inequality} equation (2) tells
\begin{equation}\label{lambdagarding}
\lambda_i(rx+se)=r\lambda_i(x)+s\quad\text{if }r\geq0\,,\quad i=1,\dots, m\,.
\end{equation}
Hence the characteristic roots are positively homogeneous and continuous.

The hyperbolicity cone can be characterized using the characteristic roots as follows
\begin{equation}\label{Gammaplambda}
\Gamma_p=\{x\in V\,:\, \lambda_m(x)>0\}\,,
\end{equation}
see for instance Fact 2.7 in \cite{bauschke2001hyperbolic}.

G{\aa}rding also showed that $\lambda_m$ is concave, or equivalently that $\lambda_1$ is convex, since $\lambda_1(-x)=-\lambda_m(x)$ (see \cite{bauschke2001hyperbolic}). An important extension of this property is given in  \cite{bauschke2001hyperbolic}.
\begin{proposition}[\cite{bauschke2001hyperbolic} Theorem 3.9 ]\label{flambdathm}
If $f: \mathbb{R}^m\to (-\infty,+\infty]$ is convex and symmetric, then
 $f\circ\lambda$ is convex.
\end{proposition}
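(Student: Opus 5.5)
The statement immediately above is Proposition \ref{flambdathm}: if $f:\mathbb{R}^m\to(-\infty,+\infty]$ is convex and symmetric, then $f\circ\lambda$ is convex on $V$. I would prove it by reducing to a majorization statement for the characteristic map, in the spirit of Lewis's convex spectral functions.

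\textbf{Step 1 (majorization of the characteristic map).} The key claim is that for $x,y\in V$ and $s\in[0,1]$, the vector $\lambda(sx+(1-s)y)$ is majorized by $s\lambda(x)+(1-s)\lambda(y)$. Write $S_k(z)=\lambda_1(z)+\dots+\lambda_k(z)$ for the sum of the $k$ largest characteristic roots. Majorization then amounts to two facts.

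First, $S_m$ is linear. It is the coefficient of $t^{m-1}$ in $p(x+te)/p(e)$, so $S_m(x)=D_e p(x)/p(e)$, which is linear in $x$.

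Second, each $S_k$ is convex. This is the hyperbolic analogue of Ky Fan's inequality. I would deduce it from the sublinearity of $\lambda_1$ (G\aa rding's convexity of $\lambda_1$) applied to a suitable derived hyperbolic polynomial. Consider the polynomial $p_k$ on $V$ whose characteristic roots at $x$ are the sums $\lambda_{i_1}(x)+\dots+\lambda_{i_k}(x)$ over all $k$-subsets. This is the $k$-th ``additive compound''. It is known to be hyperbolic with respect to $e$; this is Bauschke--G\"uler--Lewis--Sendov's route via the Helton--Vinnikov/Renegar derivative construction. Its largest root is $S_k$, so $S_k$ is convex.

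Granting both facts, $S_k(sx+(1-s)y)\le sS_k(x)+(1-s)S_k(y)$ for every $k$, with equality at $k=m$. This is exactly the majorization claim.

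\textbf{Step 2 (from majorization to convexity).} If $a$ is majorized by $b$, then $a$ lies in the convex hull of the permutations of $b$ (Rado/Hardy--Littlewood--P\'olya). Hence, for convex symmetric $f$, we get $f(a)\le \max_\pi f(\pi b)=f(b)$. Apply this with $a=\lambda(sx+(1-s)y)$ and $b=s\lambda(x)+(1-s)\lambda(y)$, then use the convexity of $f$ on $b$:
$$f(\lambda(sx+(1-s)y))\le f\bigl(s\lambda(x)+(1-s)\lambda(y)\bigr)\le s f(\lambda(x))+(1-s)f(\lambda(y)).$$
Values equal to $+\infty$ cause no trouble in these inequalities.

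\textbf{Main obstacle.} The hardest step is the convexity of $S_k$ for a general hyperbolic polynomial. Proving that the additive-compound polynomial is hyperbolic is nontrivial.

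An alternative I would try first avoids compounds and uses a variational formula instead. Let $\Gamma_{\mathrm{sub}}$ be the set of $y\in\overline\Gamma_p$ with $\lambda_1(y)\le 1$ and $S_m(y)=k$. The aim is to show
$$S_k(x)=\max\{\langle \nabla(\cdot)\rangle\text{-type linear functionals}\}.$$
Concretely, I would use the characterization $S_k(x)=\min_{t\in\mathbb{R}}\{kt+\sum_i(\lambda_i(x)-t)_+\}$ together with the convexity of $x\mapsto\sum_i(\lambda_i(x)-t)_+$. The latter would follow from showing that $\sum_i(\lambda_i)_+$ is convex, by writing
$$\sum_i(\lambda_i(x))_+=\min\{S_m(z): z-x\in\overline\Gamma_p,\ z\in\overline\Gamma_p\}.$$
That identity would itself need the hyperbolic decomposition $x=x_+-x_-$, which again is the delicate part.

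Given the difficulty, the cleanest course in the paper is to cite \cite{bauschke2001hyperbolic}, where the convexity of $S_k$ is Theorem 3.9's key lemma.
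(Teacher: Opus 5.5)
Your argument is correct and is essentially the route the paper relies on through \cite{bauschke2001hyperbolic}. There, Theorem~3.9 follows from Lemma~3.8, which is exactly your inequality $f(\lambda(sx+(1-s)y))\le f(s\lambda(x)+(1-s)\lambda(y))$ obtained from majorization, and the sublinearity of the partial sums $S_k$ comes from the hyperbolicity of your additive compound, i.e.\ the symmetric hyperbolic polynomial $\prod_{|I|=k}\sum_{i\in I}\mu_i$ composed with $\lambda$.

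Two small corrections. First, the coefficient of $t^{m-1}$ in $p(x+te)/p(e)$ is $D_e^{m-1}p(x)/\bigl((m-1)!\,p(e)\bigr)$, not $D_ep(x)/p(e)$ (the latter equals $\sigma_{m-1}(\lambda(x))$); linearity of $S_m$ still follows by homogeneity. Second, the hyperbolicity of the compound is elementary and needs no Helton--Vinnikov or Renegar-derivative input: it is a symmetric polynomial in $\lambda(x)$, hence a polynomial in $x$, and $\lambda(x+te)=\lambda(x)+t(1,\dots,1)$ makes $t\mapsto p_k(x+te)$ a product of real linear factors.
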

Here, by "symmetric" we mean $f(\lambda_1,\dots,\lambda_m)=f(\lambda_{\pi(1)},\dots,\lambda_{\pi(m)})$ for every permutation $\pi$ of $\{1,\dots,m\}$.

In fact the above theorem is a corollary of a stronger property.
\begin{proposition}[\cite{bauschke2001hyperbolic} Lemma 3.8 ]\label{flambdaprop}
If $f: \mathbb{R}^m\to (-\infty,+\infty]$ is convex and symmetric, then
for $x,y\in V$ and $\alpha\in(0,1)$, it holds
$$
f(\lambda((1-\alpha)x+\alpha y))\leq f((1-\alpha)\lambda(x)+\alpha\lambda(y))\,
$$
Moreover, if $f$ is strictly convex, the inequality is strict unless $\lambda((1-\alpha)x+\alpha y)=(1-\alpha)\lambda(x)+\alpha\lambda(y)$.
\end{proposition}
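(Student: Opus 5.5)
The plan is to reduce the statement to a majorization property of the characteristic map, and then conclude with the classical Hardy--Littlewood--P\'olya argument for symmetric convex functions. Fix $x,y\in V$ and $\alpha\in(0,1)$, and put $z=\lambda((1-\alpha)x+\alpha y)$ and $w=(1-\alpha)\lambda(x)+\alpha\lambda(y)$. Both vectors are ordered decreasingly. The key claim is that $z$ is majorized by $w$, meaning
\[
\sum_{i=1}^k z_i\le \sum_{i=1}^k w_i \quad (k=1,\dots,m-1),\qquad \sum_{i=1}^m z_i=\sum_{i=1}^m w_i .
\]

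For the equality of the total sums, expand $p(x+te)=p(e)\prod_i(t+\lambda_i(x))$. The coefficient of $t^{m-1}$ equals $p(e)\sum_i\lambda_i(x)$. It is also equal to $D_xp(e)=\nabla p(e)\cdot x$, which is linear in $x$. Hence $x\mapsto\sum_i\lambda_i(x)$ is linear, and the equality follows.

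For the partial sums, it suffices to show that $s_k(x)=\lambda_1(x)+\dots+\lambda_k(x)$ is convex, because then $s_k((1-\alpha)x+\alpha y)\le (1-\alpha)s_k(x)+\alpha s_k(y)$. To see this, I would introduce the auxiliary polynomial
\[
q_k(x)=\prod_{|S|=k}\Bigl(\sum_{i\in S}\lambda_i(x)\Bigr),
\]
where the product runs over all $k$-subsets $S\subset\{1,\dots,m\}$. Since this is a symmetric polynomial in $\lambda_1,\dots,\lambda_m$, it is a polynomial in the elementary symmetric functions of the $\lambda_i$. These are coefficients of $t\mapsto p(x+te)/p(e)$, hence polynomials in $x$, so $q_k$ is a homogeneous polynomial on $V$.

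Using \eqref{lambdagarding}, we get $q_k(x+te)=\prod_S\bigl(kt+\sum_{i\in S}\lambda_i(x)\bigr)$. This polynomial in $t$ has only real roots, and $q_k(e)=k^{\binom mk}>0$, so $q_k$ is hyperbolic with respect to $e$. Its largest characteristic root is $\max_S\frac1k\sum_{i\in S}\lambda_i(x)=s_k(x)/k$. By G{\aa}rding's theorem recalled above, the largest characteristic root of a hyperbolic polynomial is convex, so $s_k$ is convex. This step, which constructs the right hyperbolic polynomial whose top root is $s_k$, is the only non-routine point. The delicate checks are that $q_k$ is a genuine polynomial in $x$ and that its characteristic roots are exactly the normalized $k$-subset sums.

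Given the majorization, the Hardy--Littlewood--P\'olya/Birkhoff theorem writes $z=\sum_\pi\mu_\pi P_\pi w$ as a convex combination of permutations of $w$. By convexity and symmetry of $f$,
\[
f(z)\le\sum_\pi\mu_\pi f(P_\pi w)=f(w),
\]
which is the desired inequality. For the strictness statement, assume $f$ is strictly convex and $z\neq w$. Any such representation must use at least two distinct points $P_\pi w$ with positive weight. Indeed, if only one point occurred, we would have $z=P_\pi w$. Since both $z$ and $w$ are decreasingly ordered, this forces $P_\pi w=w$, contradicting $z\neq w$. Strict convexity of $f$ then gives $f(z)<f(w)$.
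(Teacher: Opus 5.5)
The paper gives no proof of this proposition and simply quotes it from Bauschke--G\"uler--Lewis--Sendov. Your argument is a correct, self-contained proof along the standard majorization route of that source: G{\aa}rding's convexity of the top root, applied to the hyperbolic polynomial $q_k=\prod_{|S|=k}\sum_{i\in S}\lambda_i$, makes $s_k$ convex; together with linearity of $\sum_i\lambda_i$ this gives $\lambda((1-\alpha)x+\alpha y)\prec(1-\alpha)\lambda(x)+\alpha\lambda(y)$, and Hardy--Littlewood--P\'olya/Birkhoff finishes. The only implicit point is that your strict Jensen step needs $f(w)<+\infty$, so that every $P_\pi w$ lies where $f$ is finite and strictly convex; this is exactly the situation in Proposition~\ref{thm:main0}, where $x,y\in\Gamma_p$ forces $w\in\mathbb R^m_{++}$.
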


An easy consequence of Proposition \ref{flambdaprop} is the following property, whose proof is given for the completeness.
\begin{proposition}\label{thm:main0}
Let $p$ be a homogeneous hyperbolic polynomial of degree $m$, with hyperbolicity cone $\Gamma_{p}$. If $e\in\Gamma_{p}$, then
\[
\Phi(x)=\frac{D_{e}p(x)}{p(x)}
\]
is convex in $\Gamma_p$.

Moreover,
\begin{equation}\label{strictPhi}
\Phi((1-\alpha)x+\alpha y)<(1-\alpha)\Phi(x)+\alpha\,\Phi(y)
\end{equation}
unless
\[
\lambda((1-\alpha)x+\alpha y)
=(1-\alpha)\lambda(x)+\alpha\lambda(y).
\]
\end{proposition}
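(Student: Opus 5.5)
We express \(\Phi\) through the characteristic roots and then apply Proposition~\ref{flambdaprop} with a suitable convex symmetric \(f\). The one point needing care is that \(D_ep\) is a derivative in an arbitrary direction \(e\in\Gamma_p\), while \eqref{deflambda} is stated with respect to a fixed hyperbolicity direction. By the remark after the definition, \(p\) is hyperbolic with respect to every \(e\in\Gamma_p\) with the same cone \(\Gamma_p\). We therefore compute all characteristic roots with respect to this \(e\).

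\textbf{Step 1: express \(\Phi\) via the roots.} From \eqref{deflambda},
\[
p(x+te)=p(e)\prod_{i=1}^m\bigl(t+\lambda_i(x)\bigr).
\]
Differentiating in \(t\) at \(t=0\) gives
\[
D_ep(x)=\frac{d}{dt}\Big|_{t=0}p(x+te)=p(e)\sum_{j=1}^m\prod_{i\neq j}\lambda_i(x).
\]
Dividing by \(p(x)=p(e)\prod_i\lambda_i(x)\), which is nonzero on \(\Gamma_p\), we get
\[
\Phi(x)=\sum_{i=1}^m\frac{1}{\lambda_i(x)}.
\]
By \eqref{Gammaplambda}, all \(\lambda_i(x)>0\) for \(x\in\Gamma_p\).

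\textbf{Step 2: choose \(f\) and apply Proposition~\ref{flambdaprop}.} Define \(f:\mathbb{R}^m\to(-\infty,+\infty]\) by
\[
f(\mu)=\sum_{i=1}^m \frac{1}{\mu_i}\ \text{ if all } \mu_i>0,\qquad f(\mu)=+\infty \text{ otherwise.}
\]
This \(f\) is symmetric and convex: it is a sum of convex functions of one variable, extended by \(+\infty\) off the open convex set \((0,\infty)^m\). It is strictly convex on its domain, since the Hessian \(\operatorname{diag}(2/\mu_i^3)\) is positive definite there.

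Take \(x,y\in\Gamma_p\) and \(\alpha\in(0,1)\). Since \(\Gamma_p\) is convex (Gårding), \((1-\alpha)x+\alpha y\in\Gamma_p\), and \(\Phi=f\circ\lambda\) there. Proposition~\ref{flambdaprop} gives
\[
\Phi((1-\alpha)x+\alpha y)\le f\bigl((1-\alpha)\lambda(x)+\alpha\lambda(y)\bigr).
\]
Convexity of \(f\) then yields
\[
f\bigl((1-\alpha)\lambda(x)+\alpha\lambda(y)\bigr)\le(1-\alpha)\Phi(x)+\alpha\Phi(y).
\]
Combining the two inequalities proves that \(\Phi\) is convex on \(\Gamma_p\).

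\textbf{Step 3: strictness.} Suppose \(\lambda((1-\alpha)x+\alpha y)\neq(1-\alpha)\lambda(x)+\alpha\lambda(y)\). Then the strict clause of Proposition~\ref{flambdaprop}, which applies because \(f\) is strictly convex, makes the first inequality in Step 2 strict. This gives \eqref{strictPhi}.

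The only step requiring attention is the identification of \(D_ep/p\) with \(\sum_i 1/\lambda_i\). It works precisely because the roots are taken with respect to the same direction \(e\) used in the derivative; the remaining steps are direct applications of the cited results.
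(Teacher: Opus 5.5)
Your proof is correct and follows the paper's own argument. You identify $\Phi(x)=\sum_i 1/\lambda_i(x)$ by differentiating \eqref{deflambda} at $t=0$, with the roots taken relative to $e$, extend $f$ by $+\infty$ off $\mathbb{R}^m_{++}$, and get the strict inequality from Proposition~\ref{flambdaprop} together with strict convexity of $f$ on $\mathbb{R}^m_{++}$. The only difference is cosmetic: you obtain plain convexity from Proposition~\ref{flambdaprop} plus convexity of $f$, whereas the paper quotes Proposition~\ref{flambdathm}, which is itself derived from Proposition~\ref{flambdaprop}.
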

\begin{proof}
First, let us consider the case $e\in\Gamma_p$.

Differentiating \eqref{deflambda} at $t=0$ yields
$$
D_ep(x)=p(e)\sum_{i=1}^m\prod_{j\neq i}\lambda_j(x)\,.
$$
On the other hand
$$
p(x)=p(e)\prod_{i=1}^m\lambda_i(x)\,,
$$
hence
\begin{equation}\label{flambda}
\Phi(x)=\frac{D_ep(x)}{p(x)}=\sum_{i=1}^m\frac{1}{\lambda_i(x)}=f(\lambda(x))\,,
\end{equation}
where
$$
f(\lambda)=\sum_{i=1}^m\frac1{\lambda_i}=\frac{\sigma_{m-1}(\lambda)}{\sigma_m(\lambda)}\,.
$$
Extend $f$ to all of $\mathbb R^m$ by setting $f(\lambda)=+\infty$ outside
$\mathbb R^m_{++}$.  This extended-valued function is convex and symmetric,
and its restriction to $\mathbb R^m_{++}$ is strictly convex.  Since
\eqref{Gammaplambda} implies that $\lambda(x)\in\mathbb R^m_{++}$ for
$x\in\Gamma_p$, Theorem \ref{flambdathm} gives the convexity of $\Phi$.
Furthermore, Proposition \ref{flambdaprop} and the strict convexity of the
restriction of $f$ to $\mathbb R^m_{++}$ yield \eqref{strictPhi}.
\end{proof}
Regarding convexity, we may in fact prove a stronger result, precisely the following.
\begin{proposition}\label{concavity}
In the same assumptions of Proposition \ref{thm:main0}, the function
$$
\Psi(x)=\Phi(x)^{-1}=\frac{p(x)}{D_{e}p(x)}\quad\text{ is concave in }\Gamma_p\,.
$$
\end{proposition}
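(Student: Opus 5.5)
Since $\Psi(x)=p(x)/D_ep(x)=1/f(\lambda(x))$ with $f(\lambda)=\sum_{i=1}^m \lambda_i^{-1}$, the natural plan is to express $\Psi$ as $g\circ\lambda$ for a suitable function $g$. We then show that $-g$ satisfies the hypotheses of Proposition \ref{flambdathm}; equivalently, we use Proposition \ref{flambdaprop} together with a monotonicity property. Set
\[
g(\lambda)=\Bigl(\sum_{i=1}^m\frac1{\lambda_i}\Bigr)^{-1}=\frac{\sigma_m(\lambda)}{\sigma_{m-1}(\lambda)}\qquad\text{on }\mathbb R^m_{++},
\]
so that $\Psi=g\circ\lambda$ on $\Gamma_p$ by \eqref{flambda} and \eqref{Gammaplambda}. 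This $g$ is (up to the factor $1/m$) the harmonic mean. It is symmetric, positively $1$-homogeneous and concave on $\mathbb R^m_{++}$. Concavity is classical. One route: $g(\lambda)=\min_{w}\sum_i w_i^2\lambda_i$ over $w\ge0$ with $\sum_i w_i=1$, by Cauchy--Schwarz, since $\bigl(\sum w_i\bigr)^2\le \sum w_i^2\lambda_i\sum \lambda_i^{-1}$ with equality for $w_i\propto 1/\lambda_i$. A minimum of linear functions is concave. Alternatively, $\sigma_m/\sigma_{m-1}$ is concave on the positive cone by the standard result on quotients of elementary symmetric functions.

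Next, extend $h:=-g$ to all of $\mathbb R^m$ by $+\infty$ outside the closed positive orthant (or outside $\mathbb R^m_{++}$). The extension must stay convex, and this is easy because $\mathbb R^m_{++}$ is a convex set and $h$ is convex on it. Proposition \ref{flambdathm} then gives convexity of $h\circ\lambda$ on $V$, hence $-\Psi$ is convex on $\Gamma_p$, i.e. $\Psi$ is concave there. If one prefers not to rely on the extended-valued version, Proposition \ref{flambdaprop} does the same job directly. For $x,y\in\Gamma_p$, convexity of $\Gamma_p$ (G{\aa}rding) gives $(1-\alpha)x+\alpha y\in\Gamma_p$, and the proposition yields
\[
-g\bigl(\lambda((1-\alpha)x+\alpha y)\bigr)\le -g\bigl((1-\alpha)\lambda(x)+\alpha\lambda(y)\bigr)\le -(1-\alpha)g(\lambda(x))-\alpha g(\lambda(y)),
\]
where the last step uses concavity of $g$.

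The main point to check carefully is that Proposition \ref{flambdathm} and Proposition \ref{flambdaprop} apply to an extended-valued $f$ with effective domain $\mathbb R^m_{++}$, exactly as they were used in the proof of Proposition \ref{thm:main0}, and that symmetry holds there. Both are immediate, so I expect the only substantive step to be the concavity of the harmonic mean, handled by the min-of-linear-functions representation above. Note that concavity of $\Psi$ is stronger than convexity of $\Phi=1/\Psi$: if $\Psi>0$ is concave, then $1/\Psi$ is convex, as the composition of a convex decreasing function with a concave one.
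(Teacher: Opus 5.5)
Your proof is correct and follows the paper's argument. You write $\Psi=g\circ\lambda$ with $g=\sigma_m/\sigma_{m-1}=(\sum_i\lambda_i^{-1})^{-1}$ concave on $\mathbb R^m_{++}$, extend $-g$ by $+\infty$ to a convex symmetric function, and apply Proposition \ref{flambdathm}; the one difference is that you prove the concavity of $g$ directly via $g(\lambda)=\min_{w\geq 0,\,\sum_i w_i=1}\sum_i w_i^2\lambda_i$ instead of citing G{\aa}rding's quotient-concavity theorem, which makes that step self-contained.
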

\begin{proof}
By G{\aa}rding's quotient-concavity theorem, the symmetric function
\[
q(\lambda)=\frac{\sigma_m(\lambda)}{\sigma_{m-1}(\lambda)}
=\left(\sum_{i=1}^m\frac1{\lambda_i}\right)^{-1}
\]
is concave on $\mathbb R^m_{++}$. Equivalently, $-q$ is convex there. Since
\[
-\Psi(x)=-q(\lambda(x)),
\]
the characteristic-root convexity theorem, applied to the convex symmetric function $-q$ (with its extended-valued convex extension outside $\mathbb R^m_{++}$), shows that $-\Psi$ is convex on $\Gamma_p$. Hence $\Psi$ is concave.
\end{proof}
Since the concavity of a nonnegative function $g$ easily implies the convexity of $1/g$, but not viceversa, Proposition \ref{concavity} implies the first part of Proposition \ref{thm:main0} (but not viceversa). Notice however that
$$
g(\lambda)=-f(\lambda)^{-1}=-\left(\sum_{i=1}^m\frac1{\lambda_i}\right)^{-1}
$$
is not strictly convex, so we can not say anything more than concavity of $\Psi$.
\begin{remark}
The convexity conclusion for $D_ep/p$ remains valid for $e\in\overline\Gamma_p$. Indeed, fix $d\in\Gamma_p$ and put $e_\varepsilon=e+\varepsilon d\in\Gamma_p$. Then $D_{e_\varepsilon}p/p$ is convex on $\Gamma_p$ and
\[
\frac{D_{e_\varepsilon}p(x)}{p(x)}
=
\frac{D_ep(x)}{p(x)}
+
\varepsilon\frac{D_dp(x)}{p(x)}
\longrightarrow
\frac{D_ep(x)}{p(x)}.
\]
Thus $D_ep/p$ is the pointwise limit of convex functions and is convex. Here the characteristic roots continue to be taken with respect to a fixed interior hyperbolicity direction; no characteristic-root formula relative to a boundary direction is being asserted.
\end{remark}

If $p$ is of a special type, it is possible to improve Proposition \ref{flambdaprop}. For this, let us recall the following definition form \cite{bauschke2001hyperbolic}.
\begin{definition}\label{def:complete}
The hyperbolic polynomial $p$ is called {\em complete} if
\[
\{x\in V:\lambda(x)=0\}=\{0\}.
\]
\end{definition}

\cite{bauschke2001hyperbolic} Fact 2.9 gives the equivalent descriptions
\[
\{x:\lambda(x)=0\}
=
\{x:x+\Gamma_p=\Gamma_p\}
=
\{x:p(tx+y)=p(y),\ \forall y\in V,\ \forall t\in\mathbb{R}\}.
\]
Therefore, for a hyperbolic polynomial $p$,
\[
p\ \text{is complete}
\quad\Longleftrightarrow\quad
\text{there is no nonzero }L\text{ such that }p(y+tL)=p(y)
\]
for all $y$ and $t$. Since
\[
D_{L}p\equiv0
\quad\Longleftrightarrow\quad
p(y+tL)=p(y),\qquad \forall y,\ t,
\]
we will use the following equivalent form:
\begin{equation}\label{eq:complete-equivalent}
p\ \text{is complete}
\quad\Longleftrightarrow\quad
L_p:=\{L:D_{L}p\equiv0\}=\{0\}.
\end{equation}
If $p$ is complete and $f$ is strictly convex, Proposition \ref{flambdaprop} can be improved and $f\circ\lambda$ results strictly convex, see Theorem 4.5 in \cite{bauschke2001hyperbolic}. In particular, what we will use is the following consequence.
\begin{theorem}[\cite{bauschke2001hyperbolic} Corollary 4.7]\label{thm:bgls}
Let $p$ be hyperbolic with hyperbolicity cone $\Gamma_p$. For $e\in \Gamma_p$, define
\[
F(x)=-\log p(x),\qquad h_{e}(x)=-(\nabla F(x))(e)=\frac{D_{e}p(x)}{p(x)},
\]
Then $F$ and $h_{e}$ are convex on $\Gamma_p$.

Moreover, if $p$ is complete, then $F$ and $h_{e}$ are strictly convex on $\Gamma_p$.
\end{theorem}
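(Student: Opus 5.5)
Both claims reduce to composing a convex symmetric function with the characteristic map $\lambda$ (taken with respect to the interior direction $e$), as in the proofs of Propositions \ref{thm:main0} and \ref{concavity}. For $x\in\Gamma_p$, \eqref{Gammaplambda} gives $\lambda(x)\in\mathbb R^m_{++}$. Writing $p(x)=p(e)\prod_i\lambda_i(x)$, we obtain
\[
F(x)=-\log p(e)-\sum_{i=1}^m\log\lambda_i(x)=-\log p(e)+g(\lambda(x)),\qquad g(\lambda)=-\sum_{i}\log\lambda_i .
\]
Extend $g$ by $+\infty$ outside $\mathbb R^m_{++}$; it is then convex and symmetric. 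Proposition \ref{flambdathm} gives convexity of $F$ on $\Gamma_p$. For $h_e$, differentiate $p(x+te)$ at $t=0$ as in \eqref{flambda} to get $h_e(x)=\sum_i 1/\lambda_i(x)=f(\lambda(x))$, so $h_e=\Phi$ and convexity is exactly Proposition \ref{thm:main0}. Concretely, $-(\nabla F(x))(e)=D_ep(x)/p(x)$ is just the chain rule.

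For strictness under completeness, both $g$ and $f$ are strictly convex on $\mathbb R^m_{++}$. By Proposition \ref{flambdaprop}, if $x\neq y$ in $\Gamma_p$ and equality holds in the convexity inequality at $z=(1-\alpha)x+\alpha y$, then two things must hold simultaneously:
\[
\lambda(z)=(1-\alpha)\lambda(x)+\alpha\lambda(y),\qquad \lambda(x)=\lambda(y).
\]
The second comes from strict convexity of $g$ (resp.\ $f$) applied to the right-hand side $g((1-\alpha)\lambda(x)+\alpha\lambda(y))\le(1-\alpha)g(\lambda(x))+\alpha g(\lambda(y))$, which is strict unless $\lambda(x)=\lambda(y)$. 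The proof then reduces to showing that these two conditions with $x\neq y$ contradict completeness.

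That reduction is the main obstacle. Set $d=y-x$. Along the segment $x+sd$, $s\in[0,1]$, I would use the two conditions to show that $\lambda(x+sd)$ is constant (affine in $s$ with equal endpoints). Applying the same argument to every subsegment, and using that $\lambda_i$ are roots of polynomials in $s$, I would deduce $p(x+sd+te)=p(x+te)$ for all $s,t$ near the segment. Replacing $x$ by nearby points in the open cone and using real-analyticity, this gives $D_dp\equiv0$, i.e.\ $d\in L_p$. Completeness in the form \eqref{eq:complete-equivalent} then forces $d=0$, a contradiction.

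The delicate point is passing from equality of the sorted roots along one segment to the global identity $D_dp\equiv 0$. Here I would follow Bauschke–Güler–Lewis–Sendov and show that $\lambda(x+sd)=\lambda(x)$ for all $s$ implies that $\lambda(d)=0$. Granted this, completeness (Definition \ref{def:complete}) gives $d=0$ directly. The implication itself is an instance of the sharpened majorization statement underlying Proposition \ref{flambdaprop}, namely that $\lambda$ of a sum is majorized by the sum of the $\lambda$'s with equality only under simultaneous “diagonalizability”. Alternatively, one can simply invoke Theorem 4.5 of \cite{bauschke2001hyperbolic}, which states that $f\circ\lambda$ is strictly convex whenever $p$ is complete and $f$ is strictly convex, applied to $g$ and $f$.
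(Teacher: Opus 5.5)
Your convexity argument is correct: you write $F=-\log p(e)+g\circ\lambda$ and $h_e=f\circ\lambda$, take the characteristic roots relative to $e$, and invoke Proposition \ref{flambdathm}. This is exactly the mechanism behind Propositions \ref{thm:main0} and \ref{concavity}. For strictness the paper gives no argument of its own; it cites Theorem 4.5 / Corollary 4.7 of \cite{bauschke2001hyperbolic}, as your last sentence does, so in that form your proposal agrees with the paper. Your reduction to $\lambda(x)=\lambda(z)=\lambda(y)$ with $x\neq y$ is correct. The propagation along the segment is also fine: a convex function meeting its chord at one interior point coincides with the chord on the whole segment, so the reduction applies at every $s\in[0,1]$. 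The identity $p(x+sd+te)=p(x+te)$ then extends to all $s,t\in\mathbb R$ by polynomiality.

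The step that fails is ``replacing $x$ by nearby points in the open cone and using real-analyticity.'' Your equality hypotheses concern the particular pair $x,y$, and nothing transfers them to another base point. What you actually have is that $D_dp$ vanishes on the single affine $2$-plane $x+\operatorname{span}\{d,e\}$; analyticity cannot upgrade that to $D_dp\equiv0$ on $V$. Your second suggestion is the right one, but it is not a majorization or ``simultaneous diagonalizability'' fact. It follows directly from positive homogeneity \eqref{lambdagarding} and continuity of $\lambda$: since $\lambda(x+sd)=\lambda(x)$ for every $s>0$,
\[
\lambda(d)=\lim_{s\to\infty}\lambda\bigl(s^{-1}x+d\bigr)=\lim_{s\to\infty}s^{-1}\lambda(x+sd)=\lim_{s\to\infty}s^{-1}\lambda(x)=0.
\]
Completeness then gives $d=0$ (Definition \ref{def:complete}; the notion is independent of the chosen direction in $\Gamma_p$ by \eqref{eq:complete-equivalent}).

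A shorter route avoids segments entirely. Set $q(w)=\sum_i\lambda_i(w)^2=\sigma_1(\lambda(w))^2-2\sigma_2(\lambda(w))$. This is a quadratic form in $w$, since each $\sigma_k(\lambda(w))$ is, up to the factor $p(e)$, a coefficient of $t\mapsto p(w+te)$. It vanishes exactly on $\{\lambda=0\}$, so it is positive definite when $p$ is complete. Then $\lambda(x)=\lambda(y)=\lambda(z)$ together with
\[
q\bigl((1-\alpha)x+\alpha y\bigr)=(1-\alpha)q(x)+\alpha q(y)-\alpha(1-\alpha)q(x-y)
\]
forces $q(x-y)=0$, i.e.\ $x=y$.
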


\section{A crucial convexity property}

The aim of this section is to establish the following property, which is crucial for our method.
\begin{theorem}\label{thm:main}
Let $n\geq2$, and fix $\alpha\in\mathbb{R}^{n}$ with $|\alpha|=1$. Set
\[
P=I-\alpha\alpha^{T}.
\]
Then the functional $\Phi:S^n_{++}\to\mathbb{R}$, defined as
\begin{equation}\label{eq:main-ineq}
\Phi(A)=\frac{\sigma_{2}(A^{-1})}{\operatorname{Tr}(PA^{-1})}\,,
\end{equation}
is convex in $S^n_{++}$. If $n\geq 4$, $\Phi$ is strictly convex.
\end{theorem}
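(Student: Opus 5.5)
The plan is to recognize $\Phi$ as a quotient $D_e p/p$ for a suitable hyperbolic polynomial $p$ on $S^n$. Then convexity follows from Proposition \ref{thm:main0} (or Theorem \ref{thm:bgls}), and strict convexity reduces to checking that $p$ is complete.

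\textbf{Step 1: rewrite $\Phi$ in terms of $A$.} By \eqref{adjA}, $\sigma_2(A^{-1})=\sigma_{n-2}(A)/\det A$ and $\operatorname{Tr}(PA^{-1})=\operatorname{Tr}(P\operatorname{adj}A)/\det A$. The last trace equals $D_P\det(A)$. Hence
\[
\Phi(A)=\frac{\sigma_{n-2}(A)}{p(A)},\qquad p(A):=D_P\det(A).
\]
Since $P\in S^n_+=\overline\Gamma_{\det}$, Theorem \ref{thm:directional-hyperbolic} shows that $p$ is hyperbolic of degree $n-1$ with respect to $I$, and its hyperbolicity cone contains $S^n_{++}$.

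\textbf{Step 2: find a direction $e$ with $D_ep=2\sigma_{n-2}$.} Write $A'$ for the compression of $A$ to $\alpha^\perp$, so that $D_{\alpha\alpha^T}\det(A)=\det(A')$, viewed as an $(n-1)\times(n-1)$ determinant. Since derivatives commute and $I=P+\alpha\alpha^T$, we have
\[
2\sigma_{n-2}(A)=D_ID_I\det A=D_Ip(A)+D_I\det(A')=D_Ip(A)+\sigma_{n-2}(A').
\]
On the other hand, $D_{\alpha\alpha^T}p=D_PD_{\alpha\alpha^T}\det=D_P\det(A')=\sigma_{n-2}(A')$, because $P$ acts as the identity on $\alpha^\perp$. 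Therefore, with $e=I+\alpha\alpha^T\in S^n_{++}\subset\Gamma_p$,
\[
D_ep=2\sigma_{n-2}(A),\qquad\text{so}\qquad \Phi=\tfrac12\,\frac{D_ep}{p}\quad\text{on }S^n_{++}.
\]
Convexity on $S^n_{++}$ then follows from Proposition \ref{thm:main0}, or from Theorem \ref{thm:bgls} restricted to $S^n_{++}\subset\Gamma_p$.

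\textbf{Step 3: strict convexity for $n\ge4$.} By Theorem \ref{thm:bgls}, it suffices to show that $p$ is complete, that is, by \eqref{eq:complete-equivalent}, that $D_LD_P\det\equiv0$ forces $L=0$. I expect this to be the main obstacle: it is exactly where $n\ge4$ must enter, and completeness presumably fails for small $n$.

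I would first try to test the identity $D_LD_P\det(A)\equiv0$ on structured matrices, working in an orthonormal basis with $e_1=\alpha$. Expanding $D_LD_P\det(A)=\sum D_{L}D_{e_ie_i^T}\det$ over $i\ge2$ gives a sum of mixed second derivatives of $\det$. Evaluating at diagonal matrices $A=\operatorname{diag}(a_1,\dots,a_n)$ yields a polynomial identity in the $a_j$ whose coefficients are linear in the diagonal entries of $L$; its vanishing should force $L_{ii}=0$. Evaluating at $A=\operatorname{diag}+s(E_{ij}+E_{ji})$ and extracting the coefficient of $s$ should force the off-diagonal entries of $L$ to vanish. The index count is where $n\ge4$ matters: each monomial $\prod_{k\ne i,j}a_k$ must retain enough free indices, after removing $i$ and $j$ and the special index $1$, to separate the coefficients. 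For $n\le3$ these coefficients can cancel, producing a nontrivial lineality space.

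Once completeness is verified, Theorem \ref{thm:bgls} gives strict convexity of $h_e=D_ep/p=2\Phi$ on $\Gamma_p\supset S^n_{++}$.
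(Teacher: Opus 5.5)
Your proposal is correct and is essentially the paper's proof: the paper uses the same polynomial $g=D_P\det$ and the direction $R=\tfrac12(I+\alpha\alpha^T)$, which is your $e/2$, and it gets $D_Rg=\sigma_{n-2}$ from the trace formula for the second derivative of $\det$ rather than from your commuting-derivatives/cofactor argument. For completeness when $n\ge4$, the paper uses the block formula $g(A)=c\,\sigma_{m-1}(C)-u^TT_{m-2}(C)u$ with $m=n-1$, while you test at diagonal and off-diagonal matrices; your tests also go through. At diagonal matrices you get $L_{11}=0$ and $L_{jj}+L_{kk}=0$ for distinct $j,k\ge2$. The coefficient of $s$ is $-2L_{ij}\sum_{l\ge2,\,l\ne i,j}\prod_{m\notin\{i,j,l\}}a_m$, which forces $L_{ij}=0$; together these give $L=0$ precisely when $n\ge4$.
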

Notice that, since $(tA)^{-1}=t^{-1}A^{-1}$,
\[
\operatorname{Tr}(P(tA)^{-1})=t^{-1}\operatorname{Tr}(PA^{-1}),
\qquad
\sigma_{2}((tA)^{-1})=t^{-2}\sigma_{2}(A^{-1}).
\]
Thus $\Phi$ is positively homogeneous of degree $-1$ that is,
\[
\Phi(tA)=t^{-1}\Phi(A),\qquad t>0\,.
\]

The proof of Theorem \ref{thm:main} is based on Proposition \ref{thm:main0} and it will be given in details in the next Subsection.
Similarly, applying Proposition \ref{concavity} we can obtain the following.
\begin{proposition}\label{concavityPhi}
In the same assumptions and notation of Theorem \ref{thm:main}, the functional
$$
\Psi(A)=\Phi(A)^{-1}=\frac{\operatorname{Tr}(PA^{-1})}{\sigma_{2}(A^{-1})}
$$
is concave in $S^n_{++}$.
\end{proposition}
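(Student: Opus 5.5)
The plan is to recognize $\Psi$ as a quotient of the form $p/D_e p$ for a suitable hyperbolic polynomial $p$ and direction $e$, and then apply Proposition \ref{concavity} directly. First I clear denominators using \eqref{adjA}. For $A\in S^n_{++}$ we have $\operatorname{Tr}(PA^{-1})=\operatorname{Tr}(P\operatorname{adj}A)/\det A$. Also, since the eigenvalues of $A^{-1}$ are $1/\lambda_i$, we have $\sigma_2(A^{-1})=\sigma_{n-2}(A)/\det A$. Hence
\[
\Psi(A)=\frac{\operatorname{Tr}(P\operatorname{adj}A)}{\sigma_{n-2}(A)}.
\]
The numerator is exactly the directional derivative $p(A):=D_P\det(A)=\operatorname{Tr}(\operatorname{adj}(A)P)$.

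Since $P=I-\alpha\alpha^T$ is positive semidefinite, we have $P\in\overline{S^n_{++}}$. By Theorem \ref{thm:directional-hyperbolic}, $p$ is therefore a homogeneous polynomial of degree $n-1$, hyperbolic with respect to $I$, whose hyperbolicity cone $\Gamma_p$ contains $S^n_{++}$. Moreover, $p(A)>0$ on $S^n_{++}$ because $P\neq0$ when $n\ge2$.

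The key step is to find $e\in\Gamma_p$ with $D_ep(A)=\sigma_{n-2}(A)$. Everything is invariant under orthogonal conjugation $A\mapsto Q^TAQ$, which preserves $\det$, $\sigma_k$ and $S^n_{++}$ and maps $\alpha$ to $Q^T\alpha$. So I may assume $\alpha=e_n$. Then $p(A)=\sum_{i<n}M_{ii}(A)$, where $M_{ii}$ is the principal $(n-1)$-minor obtained by deleting index $i$. Differentiating a principal minor in the direction $I$ gives the sum of its principal $(n-2)$-subminors, and differentiating in the direction $E_{nn}=\alpha\alpha^T$ keeps only the subminor deleting index $n$.

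Write $m_{ij}$ for the principal $(n-2)$-minor deleting $\{i,j\}$. The two derivatives are then
\[
D_Ip=\sum_{i<n}\sum_{j\ne i}m_{ij}=2\sum_{i<j<n}m_{ij}+\sum_{i<n}m_{in},\qquad D_{E_{nn}}p=\sum_{i<n}m_{in}.
\]
Since $\sigma_{n-2}(A)=\sum_{i<j}m_{ij}$, the choice $e=\tfrac12(I+\alpha\alpha^T)$ gives $D_ep=\sigma_{n-2}(A)$. This $e$ is positive definite, so $e\in S^n_{++}\subset\Gamma_p$.

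With this in hand, Proposition \ref{concavity} applied to $p$ and $e$ shows that $p/D_ep$ is concave on $\Gamma_p$. Restricting to the convex subset $S^n_{++}$ gives the concavity of $\Psi$ there.

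I expect the main obstacle to be the identity $D_ep=\sigma_{n-2}(A)$, specifically the bookkeeping of how many times each $(n-2)$-minor appears in each derivative. I would double-check it on the diagonal case $A=\operatorname{diag}(\lambda_i)$, where $p=\sum_{i<n}\prod_{j\ne i}\lambda_j$ and the computation is immediate; a polarization or continuity remark then covers general $A$. The remaining points are routine: Theorem \ref{thm:directional-hyperbolic} is applied with a boundary direction $P$, which it allows, and the hyperbolicity cone in Proposition \ref{concavity} is taken for $p$, not for $\det$.
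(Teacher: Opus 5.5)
Your proposal is correct and follows the paper's route. The paper also writes $\Psi=g/D_Rg$ with $g=D_P\det$ and $R=\tfrac12(I+\alpha\alpha^T)\in S^n_{++}\subset\Gamma_g$, using \eqref{lem:inverse-remove} and Lemmas \ref{lem:numerator} and \ref{lem:denominator}, and then invokes Proposition \ref{concavity}. The only difference is that you verify $D_Rg=\sigma_{n-2}$ by counting principal minors rather than through the trace formula for the second directional derivative of $\det$; your count is valid for general $A$, so the diagonal check is only a sanity check.
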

Similar observations to those following Proposition \ref{concavity} apply.
\medskip

We also recall the following property from the Appendix of \cite{alvarez1997convexity}.
\begin{proposition}\label{prop:matrix-convex}
Let $\alpha\in\mathbb{R}^{n}$ with $|\alpha|=1$, $P=I_{n}-\alpha\alpha^{T}$.Then the function
\[
f(A):=\frac{1}{\operatorname{Tr}(PA^{-1})}
\]
is concave in $A\in S^{n}_{++}$.
\end{proposition}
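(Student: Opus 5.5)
The plan is to recognize $\operatorname{Tr}(PA^{-1})$ as a logarithmic derivative of the determinant and then use Proposition~\ref{concavity} together with the approximation argument of the Remark after it. Let $p(A)=\det A$ on $S^n$. It is hyperbolic with respect to $I$ and has hyperbolicity cone $\Gamma_p=S^n_{++}$. By Jacobi's formula and \eqref{adjA}, for any direction $E\in S^n$,
\[
D_E\det(A)=\operatorname{Tr}\bigl(\operatorname{adj}(A)\,E\bigr)=\det(A)\,\operatorname{Tr}(A^{-1}E).
\]
Taking $E=P$ gives
\[
f(A)=\frac{1}{\operatorname{Tr}(PA^{-1})}=\frac{p(A)}{D_P\,p(A)},
\]
which is exactly the quantity $\Psi$ of Proposition~\ref{concavity} with direction $P$.

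Proposition~\ref{concavity} cannot be applied verbatim, because it requires the direction to lie in the open cone. Here $P=I-\alpha\alpha^T$ is positive semidefinite with a one-dimensional kernel, so $P\in\overline{\Gamma_p}\setminus\Gamma_p$. I will handle this as in the Remark following Proposition~\ref{concavity}.

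Set $P_\varepsilon=P+\varepsilon I\in S^n_{++}$ for $\varepsilon>0$. Proposition~\ref{concavity} shows that
\[
f_\varepsilon(A)=\frac{p(A)}{D_{P_\varepsilon}p(A)}=\frac{1}{\operatorname{Tr}(PA^{-1})+\varepsilon\operatorname{Tr}(A^{-1})}
\]
is concave on $S^n_{++}$ for every $\varepsilon>0$. For fixed $A\in S^n_{++}$ we have $\operatorname{Tr}(PA^{-1})>0$, since $n\ge2$ and $A^{-1}$ is positive definite, so $P\neq0$. Hence the denominators stay bounded away from $0$, and $f_\varepsilon(A)\to f(A)$ as $\varepsilon\to0^+$. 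A pointwise limit of concave functions is concave, so $f$ is concave on $S^n_{++}$.

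The only delicate point is this passage from an interior direction to the boundary direction $P$. The limit must be taken on the concave quantity $p/D_{P_\varepsilon}p$ itself, not on the characteristic roots taken with respect to $P$, which are not defined since $P\notin\Gamma_p$. The positivity of $\operatorname{Tr}(PA^{-1})$ is what guarantees that the limit is finite and equal to $f$.

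As an independent check, I would also note a direct argument. Let $e_1,\dots,e_{n-1}$ be an orthonormal basis of $\alpha^\perp$, so that $\operatorname{Tr}(PA^{-1})=\sum_{i=1}^{n-1}e_i^TA^{-1}e_i=\operatorname{Tr}\bigl((E^TA^{-1}E)\bigr)$, where $E=[e_1|\cdots|e_{n-1}]$. The map $A\mapsto(E^TA^{-1}E)^{-1}$ is a Schur complement of $A$ in the basis $\{e_1,\dots,e_{n-1},\alpha\}$, and it is matrix-concave. Moreover, $B\mapsto 1/\operatorname{Tr}(B^{-1})$ is concave and monotone on $S^{n-1}_{++}$; this is the case $P=I$ of the argument above. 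Composing the two gives the concavity of $f$ again.
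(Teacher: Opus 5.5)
Your proof is correct, but it does not follow the paper's route. The paper gives no argument for Proposition~\ref{prop:matrix-convex} and simply quotes it from the Appendix of \cite{alvarez1997convexity}. You instead derive it internally from the hyperbolic-polynomial framework of Section~2, in three steps:
\begin{enumerate}
\item you identify $f=\det/D_P\det$;
\item you apply Proposition~\ref{concavity} to the interior directions $P+\varepsilon I$;
\item you pass to the pointwise limit, which is legitimate because $\operatorname{Tr}(PA^{-1})>0$ for $n\geq2$.
\end{enumerate}
You also take that limit on the concave quantity $p/D_{P_\varepsilon}p$, not on characteristic roots relative to the boundary direction $P$. This is exactly the caveat in the Remark following Proposition~\ref{concavity}.

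What your route buys is self-containment and uniformity. Both convexity ingredients behind the sublevel-set convexity in \eqref{eq:H-convex}, namely Theorem~\ref{thm:main} and Proposition~\ref{prop:matrix-convex}, then come from the same G{\aa}rding/Bauschke--G\"uler--Lewis--Sendov mechanism. At no extra cost you also get concavity of $A\mapsto 1/\operatorname{Tr}(BA^{-1})$ for every nonzero $B\geq0$. The citation is shorter, but it sends the reader to an external matrix-analysis argument.

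Your Schur-complement check is also sound. The inverse of the $\alpha^\perp$-block of $A^{-1}$ is the Schur complement of $\alpha^TA\alpha$ in $A$, which is Loewner-concave. The map $B\mapsto 1/\operatorname{Tr}(B^{-1})$ is concave and Loewner-nondecreasing. This is the more elementary of your two routes: it needs hyperbolicity only in the classical case $e=I$, which reduces to concavity of the harmonic mean of the eigenvalues.
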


\subsubsection{Proof of Theorem \ref{thm:main}: convexity}

Now we can proceed to the proof of Theorem \ref{thm:main}. First we take care of the convexity statement and we divide it in a few steps.
\medskip

{\bf STEP 1}: {\em getting rid of the inverse matrix.}

For every $A\in S^{n}_{++}$ it holds
\begin{equation}\label{lem:inverse-remove}
\Phi(A)=\frac{\sigma_{n-2}(A)}{\operatorname{Tr}(P\,\operatorname{adj} A)}.
\end{equation}

Indeed, let the eigenvalues of $A$ be $\lambda_{1},\ldots,\lambda_{n}>0$. Then the eigenvalues of $A^{-1}$ are $\lambda_{1}^{-1},\ldots,\lambda_{n}^{-1}$.
Hence
\begin{equation}\label{sigma2inv}
\sigma_{2}(A^{-1})=\sum_{1\leq i<j\leq n}\frac{1}{\lambda_{i}\lambda_{j}}
=\frac{\sigma_{n-2}(A)}{\det A}.
\end{equation}
Formula \eqref{adjA} also gives
\[
\operatorname{Tr}(PA^{-1})=\frac{\operatorname{Tr}(P\,\operatorname{adj} A)}{\det A}.
\]
Dividing the two expressions yields the result.
\medskip

{\bf STEP 2}: {\em writing the numerator and denominator as derivatives of a hyperbolic polynomial}

Set
\[
b=\alpha\alpha^{T},\qquad P=I-b,
\]
and define
\[
R=b+\frac12P=\frac12(I+b).
\]
Notice that, in a suitable reference frame in which $\alpha=e_n$, we have
\[
b_{nn}=1,
\qquad
b_{ij}=0\quad\text{for }(i,j)\neq(n,n).
\]
So $R$ is positive definite, with eigenvalues $1,1/2,\ldots,1/2$.

Let
\[
g(A)=D_{P}\det(A)=\left.\frac{d}{dt}\det(A+tP)\right|_{t=0},
\]
that is the directional derivative of the determinant of $A$ in the direction $P$:

\begin{lemma}\label{lem:numerator}
For every $A\in S^{n}_{++}$,
\[
g(A)=\operatorname{Tr}(P\,\operatorname{adj} A).
\]
\end{lemma}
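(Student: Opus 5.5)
The identity is Jacobi's formula for the derivative of the determinant, combined with the symmetry of $P$; I would verify it directly rather than invoke anything deeper. Fix $A\in S^n_{++}$. Since $A$ is invertible, for small $t$ factor
\[
\det(A+tP)=\det A\,\det\bigl(I+tA^{-1}P\bigr).
\]
Then I use the elementary expansion $\det(I+tM)=1+t\operatorname{Tr}M+O(t^2)$. One way to see it is through \eqref{detexpansion}: $\det(I+tM)=t^n\det(M+t^{-1}I)=\sum_k\sigma_k(M)t^k$. The only subtlety is that \eqref{detexpansion} is stated for symmetric matrices and $A^{-1}P$ need not be symmetric. This is harmless: the characteristic polynomial identity holds for any square matrix, with $\sigma_1(M)=\operatorname{Tr}M$. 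Alternatively, replace $A^{-1}P$ by the symmetric matrix $A^{-1/2}PA^{-1/2}$, which is similar to it and has the same trace and determinant expansion.

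Differentiating at $t=0$ gives
\[
g(A)=\det A\,\operatorname{Tr}(A^{-1}P)=\operatorname{Tr}\bigl(P\,\det A\,A^{-1}\bigr)=\operatorname{Tr}(P\operatorname{adj}A).
\]
The middle step uses the cyclicity of the trace, and the last step uses \eqref{adjA}.

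As a cross-check independent of invertibility, one can expand $\det(A+tP)$ entrywise by multilinearity in columns. The $t$-linear term is $\sum_{i,j}P_{ij}C_{ij}(A)$, where $C_{ij}(A)$ is the cofactor. Since $\operatorname{adj}A=C^T$ and $P$ is symmetric, this sum equals $\operatorname{Tr}(P\operatorname{adj}A)$. This also shows that the identity extends by continuity to all of $S^n$.

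There is no real obstacle here. The only point needing care is the non-symmetry of $A^{-1}P$ when appealing to \eqref{detexpansion}, and the symmetrization $A^{-1/2}PA^{-1/2}$ or the cofactor expansion handles it.
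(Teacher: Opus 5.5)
Your proof is correct and takes the same route as the paper: Jacobi's formula $D_P\det(A)=\det A\,\operatorname{Tr}(A^{-1}P)$, then $A^{-1}=\operatorname{adj}A/\det A$ and cyclicity of the trace. The paper simply quotes the derivative formula, whereas you also justify it; your treatment of the nonsymmetric matrix $A^{-1}P$ and your cofactor cross-check are both sound, and the latter does not even need $P$ to be symmetric.
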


\begin{proof}
The first derivative formula for the determinant gives
\[
D_{P}\det(A)=\det A\,\operatorname{Tr}(A^{-1}P).
\]
Since $A^{-1}=(\operatorname{adj}A)/\det A$, this equals $\operatorname{Tr}(P\,\operatorname{adj}A)$.
\end{proof}
\begin{remark}
Notice that, by Theorem \ref{thm:directional-hyperbolic}, this tells that $g(A)$ is hyperbolic and $S^n_{++}\subseteq\Gamma_g$.
\end{remark}
\medskip

Next we show that the denominator is also a directional derivative

\begin{lemma}\label{lem:denominator}
For every $A\in S^{n}_{++}$,
\[
\sigma_{n-2}(A)=D_{R}g(A)\,.
\]
\end{lemma}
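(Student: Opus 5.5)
The plan is to compute $D_R g(A)$ directly as a mixed second directional derivative of the determinant, working in the reference frame where $\alpha=e_n$. First, the identity is invariant under orthogonal changes of frame. If $Q$ is orthogonal, then $\sigma_{n-2}(Q^TAQ)=\sigma_{n-2}(A)$. Both $g$ and $D_R g$ are built from $\det$ and the matrices $P,R$, and these transform covariantly. So I may assume $\alpha=e_n$, in which case
\[
P=\operatorname{diag}(1,\dots,1,0),\qquad R=\operatorname{diag}(\tfrac12,\dots,\tfrac12,1).
\]
Note that $A$ itself is \emph{not} diagonalized here; only $P$ and $R$ are diagonal.

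Second, by definition $g=D_P\det$, so
\[
D_Rg(A)=\left.\frac{\partial^2}{\partial s\,\partial t}\det(A+sR+tP)\right|_{s=t=0}.
\]
For diagonal directions $X=\operatorname{diag}(x_i)$ and $Y=\operatorname{diag}(y_i)$, this derivative is computed by multilinearity of $\det$ in the rows. Differentiating in direction $E_{ii}$ replaces row $i$ by $e_i^T$, which produces the cofactor obtained by deleting row and column $i$. Doing this for two distinct indices gives the principal minor $A_{\{i,j\}'}$ of $A$ with rows and columns $i,j$ removed. The diagonal terms $i=j$ do not contribute, since row $i$ cannot be differentiated twice. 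Hence
\[
D_XD_Y\det(A)=\sum_{i<j}\bigl(x_iy_j+x_jy_i\bigr)\,\det A_{\{i,j\}'} .
\]

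Third, with $x=$ diagonal of $R$ and $y=$ diagonal of $P$, I check that every coefficient equals $1$. For $i<j<n$ the coefficient is $\tfrac12\cdot1+\tfrac12\cdot1=1$. For $i<j=n$ it is $x_iy_n+x_ny_i=\tfrac12\cdot 0+1\cdot1=1$. Therefore
\[
D_Rg(A)=\sum_{i<j}\det A_{\{i,j\}'},
\]
the sum of all $(n-2)\times(n-2)$ principal minors of $A$, which is $\sigma_{n-2}(A)$.

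The only delicate point is the second step: justifying the mixed-derivative formula for a non-diagonal $A$. I would handle it by expanding $\det(A+sR+tP)$ through multilinearity in the rows, which requires only that the two directions be diagonal. This is exactly why the choice of frame adapted to $\alpha$, rather than one diagonalizing $A$, is the right one.
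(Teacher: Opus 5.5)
Your proof is correct, but it takes a different route from the paper.

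The paper works coordinate-free with $X=A^{-1}$. It applies the second-derivative formula $D_UD_V\det(A)=\det(A)\bigl[\operatorname{Tr}(XU)\operatorname{Tr}(XV)-\operatorname{Tr}(XUXV)\bigr]$ with $U=P$, $V=R$. It then simplifies using $s=\operatorname{Tr}X$, $c=\alpha^TX\alpha$, cyclicity of the trace and $\operatorname{Tr}(XbXb)=c^2$, which gives $D_Rg(A)=\det(A)\,\sigma_2(A^{-1})$. Only then does it convert via $\sigma_2(A^{-1})=\sigma_{n-2}(A)/\det A$.

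You instead use orthogonal covariance to make $P$ and $R$ diagonal. You then read off the $st$-coefficient of the expansion $\det\bigl(A+\operatorname{diag}(sx_i+ty_i)\bigr)=\sum_{S}\prod_{i\in S}(sx_i+ty_i)\det A_{S'}$, where $A_{S'}$ deletes the rows and columns in $S$ and the empty determinant is $1$. This is exactly your formula $\sum_{i<j}(x_iy_j+x_jy_i)\det A_{\{i,j\}'}$, and your coefficient check is right in both cases $j<n$ and $j=n$.

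Your route has three advantages. It is more elementary. It proves the identity as a polynomial identity on all of $S^n$, with no invertibility of $A$ needed, which fits the later treatment of $g$ as a hyperbolic polynomial on the whole space. And it explains the choice $R=\tfrac12(I+\alpha\alpha^T)$: the weight $\tfrac12$ on the range of $P$ and the weight $1$ on $\alpha$ are exactly what make every pair coefficient equal to $1$.

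The paper's route avoids the change of frame and lands directly on $\det(A)\,\sigma_2(A^{-1})$, the form that matches the numerator $\sigma_2(A^{-1})$ of $\Phi$. The price is working with $A^{-1}$, hence only on $S^n_{++}$, which is all the lemma asks for.
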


\begin{proof}
Since $g(A)=D_{P}\det(A)$,
\[
D_{R}g(A)=D_{R}D_{P}\det(A).
\]
Let $X=A^{-1}$. The second directional derivative formula for the determinant is
\[
D_{U}D_{V}\det(A)=\det(A)\cdot\left[
\operatorname{Tr}(XU)\operatorname{Tr}(XV)-\operatorname{Tr}(XUXV)
\right].
\]
Taking $U=P$ and $V=R$, we obtain
\[
D_{R}g(A)=\det (A)\left[
\operatorname{Tr}(XP)\operatorname{Tr}(XR)-\operatorname{Tr}(XPXR)
\right].
\]
Write
\[
s=\operatorname{Tr}X,\qquad c=\operatorname{Tr}(Xb)=\alpha^{T}X\alpha.
\]
Since $P=I-b$ and $R=\frac12(I+b)$,
\[
\operatorname{Tr}(XP)=s-c,\qquad
\operatorname{Tr}(XR)=\frac12(s+c).
\]
Therefore
\[
\operatorname{Tr}(XP)\operatorname{Tr}(XR)=\frac12(s-c)(s+c)
=\frac12(s^{2}-c^{2}).
\]
Moreover,
\[
\operatorname{Tr}(XPXR)=\frac12\operatorname{Tr}\bigl(X(I-b)X(I+b)\bigr).
\]
Expanding the matrix inside the trace gives
\[
X(I-b)X(I+b)=X^{2}+X^{2}b-XbX-XbXb.
\]
By cyclic invariance of the trace,
\[
\operatorname{Tr}(X^{2}b)=\operatorname{Tr}(XbX),
\]
so the middle two terms cancel after taking the trace. Since $b=\alpha\alpha^{T}$ is a rank-one projection,
\[
\operatorname{Tr}(XbXb)=(\alpha^{T}X\alpha)^{2}=c^{2}.
\]
Thus
\[
\operatorname{Tr}(XPXR)=\frac12\left(\operatorname{Tr}(X^{2})-c^{2}\right).
\]
It follows that
\[
\operatorname{Tr}(XP)\operatorname{Tr}(XR)-\operatorname{Tr}(XPXR)
=\frac12\left(s^{2}-\operatorname{Tr}(X^{2})\right)
=\sigma_{2}(X).
\]
Therefore
\[
D_{R}g(A)=\det (A)\,\sigma_{2}(A^{-1}).
\]
By \eqref{sigma2inv},
\[
\sigma_{2}(A^{-1})=\frac{\sigma_{n-2}(A)}{\det (A)},
\]
and hence $D_{R}g(A)=\sigma_{n-2}(A)$.
\end{proof}
\medskip

{\bf STEP 3}: {\em conclusion}.

Combining \eqref{lem:inverse-remove} with the previous lemmas, we obtain
\begin{equation}\label{eq:key-identification}
\Phi(A)=\frac{\sigma_{n-2}(A)}{\operatorname{Tr}(P\,\operatorname{adj} A)}
=\frac{D_{R}g(A)}{g(A)}.
\end{equation}
This identification is the key point of the proof.
\medskip

Since
\[
g(A)=D_{P}\det(A)
\]
and $P=I-\alpha\alpha^{T}\geq 0$, that is, $P$ is in the closure of the hyperbolicity cone of the determinant,
 Theorem \ref{thm:directional-hyperbolic} implies that
\[
g(A)=D_{P}\det(A)
\]
is also a hyperbolic polynomial, and that its hyperbolicity cone contains $S^{n}_{++}$.

On the other hand,
\[
R=\frac12(I+\alpha\alpha^{T})>0,
\]
so $R\in S^{n}_{++}$. Now, taking into account \eqref{eq:key-identification}, we can apply Proposition \ref{thm:main0} to $g$ in the direction $R$ to get the convexity of $\Phi$.

\subsubsection{Proof of Theorem \ref{thm:main}: strict convexity}
By Theorem \ref{thm:bgls}, to prove strict convexity it is sufficient to show that $p=g(A)$ is complete. We divide this part into steps too.
\medskip

{\bf STEP 1:} {\em the Block Formula}

To simplify slightly notation, let us denote $m=n-1$ in the rest of this section.

Through orthogonal transformation, we can assume $$P=\begin{bmatrix}I_{m}&0\\ 0&0\end{bmatrix}$$\,.

Given $A\in S^n_{++}$, let us write it as
\begin{equation}\label{A}
A=\begin{pmatrix}
C& u\\
u^{T} & c
\end{pmatrix}\,,
\end{equation}
that is, we denote by $C$ the matrix in $S^{m}_{++}$ with entries $c_{ij}=a_{ij}$ for $i,j=1,\dots,m$, $u=(a_{1n},a_{2n},\dots,a_{mn})^T$, $c=a_{nn}$.

Then
\begin{equation}\label{eq:g-block0}
g(A)=\operatorname{Tr}(P\,\operatorname{adj}A)
=c\,\sigma_{m-1}(C)-u^{T}\left[\sigma_{m-1}(C)C^{-1}-\det(C)C^{-2}\right]u\,.
\end{equation}
Indeed,
\[
g(A)=D_{P}\det(A)
=\left.\frac{d}{dt}
\det
\begin{pmatrix}
C+tI_{m} & u\\
u^{T} & c
\end{pmatrix}\right|_{t=0}.
\]
When $C+tI_{m}$ is invertible, the Schur complement formula gives
\[
\det
\begin{pmatrix}
C+tI_{m} & u\\
u^{T} & c
\end{pmatrix}
=\det(C+tI_{m})\cdot
\left(c-u^{T}(C+tI_{m})^{-1}u\right).
\]
Differentiating at $t=0$, the first part contributes
\[
c\,D_{I}\det(C)=c\,\operatorname{Tr}(\operatorname{adj}C)=c\,\sigma_{m-1}(C),
\]
and the second part gives
$$
\begin{array}{rl}
-u^{T}D_{I}\left(\det(C)C^{-1}\right)u&=-u^{T}\left[\sigma_{m-1}(C)C^{-1}+\det(C)\frac{d}{dt}(C^{-1})_{|t=0}\right]u\\
\\
&=-u^T\left[\sigma_{m-1}(C)C^{-1}-\det(C)C^{-2}\right]u\,.
\end{array}
$$
Putting the two contributions together, we obtain \eqref{eq:g-block0}.

Notice that $\sigma_{m-1}(C)C^{-1}-\det(C)C^{-2}=T_{m-2}(C)$, where $T_k$ denotes the so called {\em Newton Transform} of order $k$ of $C$, defined as
\[
T_{k}(C)=\sigma_{k}(C)I-\sigma_{k-1}(C)C+\sigma_{k-2}(C)C^{2}-\cdots+(-1)^{k}C^{k}.
\]
So, \eqref{eq:g-block0} can be rewritten also in the following more compact and elegant form
\begin{equation}\label{eq:g-block}
g(A)=c\,\sigma_{m-1}(C)-u^{T}T_{m-2}(C)u\,,
\end{equation}
and, since both sides are polynomial identities, in fact no invertibility assumption on $C$ is needed.
\medskip

{\bf STEP 2:} {\em analysis of Degeneracy Directions}

Assume $L\in L_{g}$, that is,
\[
D_{L}g(A)\equiv0.
\]
We want to prove that $L=0$.

Set
\[
L=
\begin{pmatrix}
M & v\\
v^{T} & \ell
\end{pmatrix}\,,
\]
where $M\in S^{m}$, $v$ is a vector of length $m$ and $\ell\in\mathbb{R}$.

First, take $u=0$ in \eqref{A}. Then, by \eqref{eq:g-block}, it holds
\[
g(A)=c\,\sigma_{m-1}(C).
\]
Thus
\[
D_{L}g(A)=\ell\,\sigma_{m-1}(C)+c\,D_{M}\sigma_{m-1}(C).
\]
This is identically zero for all $C$ and $c$, so
\begin{equation}\label{zero}
\ell=0,\qquad D_{M}\sigma_{m-1}(C)\equiv0.
\end{equation}
Take $C=I_m$, and let $\mu_1,\dots,\mu_m$ denote the eigenvalues of $M$. Since
\[
\sigma_{m-1}(I_m+tM)
=
\sum_{j=0}^{m-1}(m-j)\sigma_j(M)t^j,
\]
the second equation in \eqref{zero}, differentiated at $t=0$, gives
\[
(m-1)\operatorname{Tr}(M)=0,
\qquad\text{hence}\qquad
\operatorname{Tr}(M)=0.
\]
Now take $C=I_m+sN$, where $N\in S^m$ is arbitrary and $s$ is sufficiently small that $C>0$, and recall the expansion
\begin{equation}\label{sigmaNM}
\sigma_{m-1}(I_m+sN+tM)=\sum_{i=0}^{m-1}\sum_{j=0}^{m-1-i}\binom{m-(i+j)}{m-1-{i+j}}\sigma_{(i,j)}(N,M)s^it^j\,,
\end{equation}
where
$$
\sigma_{(i,j)}(N,M)=\frac1{i!j!}\frac{\partial^{i+j}\sigma_{i+j}(sN+tM)}{\partial s^i\partial t^j}|_{s=0,t=0}\,.
$$
In particular
$$
\sigma_{(0,1)}(N,M)=\operatorname{Tr}(M)\,,\qquad\sigma_{(1,1)}(N,M)=\operatorname{Tr}(M)\operatorname{Tr}(N)-\operatorname{Tr}(NM)\,.
$$
The second of \eqref{zero} now reads
\begin{equation}\label{DNDM}
\sum_{i=0}^{m-2}\binom{m-(i+1)}{m-1-(i+1)}\sigma_{(i,1)}(N,M)s^i=0\quad\text{for every sufficiently small }s\,.
\end{equation}
Taking the derivative with respect to $s$ at $s=0$ gives
\[
0=(m-2)\sigma_{(1,1)}(N,M)
=(m-2)\bigl(\operatorname{Tr}(M)\operatorname{Tr}(N)-\operatorname{Tr}(NM)\bigr).
\]
Here $m=n-1\geq3$, so $m-2>0$. Since $\operatorname{Tr}(M)=0$, we obtain
\begin{equation}\label{TrNM0}
\operatorname{Tr}(NM)=0\qquad\text{for every }N\in S^m.
\end{equation}
This implies $M=0$. For example, taking $N=ww^T$ gives $w^TMw=0$ for every $w\in\mathbb R^m$.

At this point, we already know $M=0$ and $\ell=0$. Returning to the full formula \eqref{eq:g-block0}, differentiating in direction $L$ means just differentiating with respect to $u$ in direction $v$, so we have
\[
D_{L}g(A)=-2v^T[\sigma_{m-1}(C)C^{-1}-\det(C)C^{-2}]u.
\]
This vanishes for all $C$ and $u$. Taking $C=I$, we have
\[
-2(m-1)v^{T}u=0,\qquad \forall u\in\mathbb{R}^{m}.
\]
Hence $v=0$. Therefore $L=0$, i.e.
\[
n\geq4\quad\Longrightarrow\quad L_{g}=\{0\}.
\]
Finally, by Theorem \ref{thm:bgls} we get the following corollary, which concludes the proof of Theorem \ref{thm:main}.
\begin{corollary}\label{cor:Phi-strict}
When $n\geq4$,
\[
\Phi(A)=\frac{D_{R}g(A)}{g(A)}
\]
is strictly convex on $S^{n}_{++}$.
\end{corollary}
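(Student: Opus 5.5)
The plan is to read Corollary~\ref{cor:Phi-strict} off Theorem~\ref{thm:bgls}, applied to the polynomial $p=g=D_P\det$ and the direction $e=R$. Two hypotheses must be checked. The first is that $g$ is hyperbolic with $R$ in its hyperbolicity cone. This follows from Theorem~\ref{thm:directional-hyperbolic}, because $P\in\overline{S^n_{++}}$. The same theorem gives $S^n_{++}\subseteq\Gamma_g$, and $R=\tfrac12(I+\alpha\alpha^T)\in S^n_{++}$. The second hypothesis is that $g$ is complete. This is exactly the content of STEP~2: via \eqref{eq:complete-equivalent}, it shows $L_g=\{0\}$ whenever $m=n-1\geq3$, i.e.\ $n\geq4$.

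With both hypotheses in hand, Theorem~\ref{thm:bgls} says that $h_R(x)=D_Rg(x)/g(x)$ is strictly convex on $\Gamma_g$. Restricting to the convex subset $S^n_{++}\subseteq\Gamma_g$ preserves strict convexity. Finally, \eqref{eq:key-identification} identifies $h_R$ with $\Phi$ on $S^n_{++}$, which gives the corollary and hence the strict part of Theorem~\ref{thm:main}.

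The only delicate point is confirming that completeness, as used in Theorem~\ref{thm:bgls}, means the same thing as $L_g=\{0\}$. I would justify this with the chain of equivalences from Fact~2.9 recalled before \eqref{eq:complete-equivalent}. One of its ingredients is that $D_Lg\equiv0$ is equivalent to $g(y+tL)=g(y)$ for all $y$ and $t$. This holds because $t\mapsto g(y+tL)$ is a polynomial whose derivative is $(D_Lg)(y+tL)$.

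I would also double-check two computational points in STEP~2. One is that $C=I_m+sN$ stays positive definite for small $s$, so that $A\in S^n_{++}$ along the curve. The other is that the coefficient of $s$ in \eqref{DNDM} is $(m-2)\sigma_{(1,1)}(N,M)$, which is nonzero exactly when $m\geq3$. This is where the restriction $n\geq4$ enters. Once these are in place, the corollary is a direct citation.
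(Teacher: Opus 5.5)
Your proposal is correct and follows the paper's own route. The paper likewise reduces strict convexity to completeness of $g=D_P\det$ via Theorem~\ref{thm:bgls}, with direction $R\in S^n_{++}\subseteq\Gamma_g$; it proves $L_g=\{0\}$ for $m=n-1\geq 3$ through the block formula \eqref{eq:g-block} and the degeneracy-direction analysis, and then reads off the corollary on $S^n_{++}$ via \eqref{eq:key-identification}, with your extra checks (the equivalence \eqref{eq:complete-equivalent} and the factor $m-2$ in \eqref{DNDM}) being exactly where $n\geq 4$ enters.
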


The proof of Theorem \ref{thm:main} is complete.

\subsubsection{The Case $n=3$}
For the sake of completeness, we will explain why $n=3$ is a special case.

In this case $m=n-1=2$, so
\[
\sigma_{m-1}(C)=\sigma_{1}(C)=\operatorname{Tr}C\,,
\]
and formula \eqref{sigmaNM} reads simply
\[
\sigma_{m-1}(I+sN+tM)=2+s\operatorname{Tr}(N)+t\operatorname{Tr}(M)\,,
\]
whence
\[
D_ND_{M}\sigma_{1}(I)=0
\]
and we cannot obtain \eqref{TrNM0}, but only
\[
\ell=0,\qquad \operatorname{Tr}M=0.
\]
Returning to the full block formula, here $T_{m-2}=T_0=I_m$, so the identity $D_Lg\equiv0$ also gives $-2v^Tu=0$ for every $u\in\mathbb R^m$ and hence $v=0$. Thus
\[
L_{g}=
\left\{
\begin{pmatrix}
M & 0\\
0 & 0
\end{pmatrix}
: M=M^{T},\ \operatorname{Tr}M=0
\right\}.
\]
In coordinate-free form,
\[
L_{g}=\{L=L^{T}:L\alpha=0,\ \operatorname{Tr}L=0\}.
\]
Hence $g$ is not complete.




\section{An ad hoc Constant Rank Theorem}
Let $u$ be the admissible solution of problem \eqref{eq:s2-eigen}, normalized by $\|u\|_\infty=1$, and set
$v=-\log(-u)$. Then
\eqref{eq:s2-eigen} is equivalent to
\begin{equation}\label{eq:v-equation}
\begin{cases}
\sigma_{2}(D^{2}v)-\operatorname{Tr}(P(Dv)D^{2}v)=\lambda(K) & \text{in }K,\\
v(x)\to+\infty & x\to\partial K,
\end{cases}
\end{equation}
where $P(\nabla v)=|Dv|^{2}I-Dv\otimes Dv$ is the matrix with entries
\begin{equation}\label{eq:Pij}
P_{ij}=|\nabla v|^{2}\delta_{ij}-v_{i}v_{j},\quad i,j=1,\ldots,n.
\end{equation}
See Subsection \ref{subs} below for details. It follows that $P$ is positive semidefinite and
$\operatorname{Tr}(PD^{2}v)=\sum_{i,j=1}^{n}P_{ij}v_{ij}$. Moreover,
\[
D^2u=e^{-v}\bigl(D^2v-Dv\otimes Dv\bigr),
\]
so the admissibility of $u$ is equivalent to
$D^2v-Dv\otimes Dv\in\Gamma_2$. In particular, since $Dv\otimes Dv\geq0$, one also has $D^2v\in\Gamma_2$; however, the transformed admissibility condition needed below is the former one.

One of the main ingredients in the proof of Theorem \ref{thm:strict-convex} is the following constant-rank theorem, which states as follows (see e.g. \cite{caffarelli1985convexity} and \cite{korevaar1987convex}). Some related results have been obtained in \cite{ma2008convexity}.

\begin{theorem}\label{thm:revised}
Let $K\subset\mathbb{R}^{n}$ be a connected domain. Let
\[
v\in C^{4}(K),\qquad D^{2}v(x)\geq0,
\]
and assume that, for some constant $\lambda>0$, $v$ satisfies
\begin{equation}\label{eq:theorem-equation}
\sigma_{2}(D^{2}v)-\operatorname{Tr}(P(Dv)D^{2}v)=\lambda
\quad\text{in }K.
\end{equation}
Assume further that $v$ is admissible, namely
\begin{equation}\label{eq:admissible}
W(x):=D^{2}v(x)-Dv(x)\otimes Dv(x)\in\Gamma_{2}
\quad\text{for all }x\in K.
\end{equation}
Then
\[
\operatorname{rank}D^{2}v(x)
\]
is constant in $K$.
\end{theorem}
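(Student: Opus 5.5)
We argue along the lines of the Caffarelli--Friedman/Korevaar--Lewis method and the Caffarelli--Guan--Ma framework \cite{caffarelli2007constant}. Let \(l\) be the minimal rank of \(D^2v\) in \(K\), attained at some \(x_0\). Set
\[
\Omega_l=\{x\in K:\operatorname{rank}D^2v(x)=l\}.
\]
Rank is lower semicontinuous, so \(\Omega_l\) is relatively closed. It therefore suffices to show that \(\Omega_l\) is open; connectedness of \(K\) then gives \(\Omega_l=K\). If \(l=n\) there is nothing to prove. Note also that \(l\ge1\) necessarily: \(D^2v=0\) at a point would force the left-hand side of \eqref{eq:theorem-equation} to vanish there, contradicting \(\lambda>0\).

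Near \(x_0\) we introduce the test function \(\phi=\sigma_{l+1}(D^2v)\). A refinement such as
\[
\phi=\sigma_{l+1}(D^2v)+\frac{\sigma_{l+2}(D^2v)}{\sigma_{l+1}(D^2v)}
\]
may be used if needed to control third-derivative terms. In a small neighborhood \(\mathcal O\) of \(x_0\) the \(l\) large eigenvalues (indices in a set \(G\)) stay bounded below, and the \(n-l\) small ones (indices in \(B\)) satisfy \(\lambda_i\le C\phi\). The goal is a differential inequality
\[
\sum_{i,j} F^{ij}\phi_{ij}\le C\bigl(\phi+|D\phi|\bigr)\qquad\text{in }\mathcal O ,
\]
where
\[
F^{ij}=\frac{\partial}{\partial v_{ij}}\bigl[\sigma_2(D^2v)-\operatorname{Tr}(P(Dv)D^2v)\bigr]=\sigma_2^{ij}(D^2v)-P_{ij}.
\]
Since \(\phi\ge0\) and \(\phi(x_0)=0\), the strong maximum principle then gives \(\phi\equiv0\) near \(x_0\), so \(\Omega_l\) is open. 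For this we need \(F^{ij}\) to be positive definite. This is where the admissibility assumption \eqref{eq:admissible} enters. A direct computation gives \(F^{ij}=\sigma_2^{ij}(W)+\) terms proportional to \(v_iv_j\), because
\[
\sigma_2^{ij}(D^2v)-P_{ij}=\bigl(\operatorname{Tr}(D^2v)-|Dv|^2\bigr)\delta_{ij}-\bigl(v_{ij}-v_iv_j\bigr)=\sigma_2^{ij}(W).
\]
Indeed the operator equals \(\sigma_2(W)\) up to lower order terms, since \(e^{2v}\sigma_2(D^2u)=\sigma_2(W)\). Hence \(F^{ij}=\sigma_2^{ij}(W)>0\) because \(W\in\Gamma_2\). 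This observation also suggests rewriting the whole equation as \(\sigma_2(W)=\lambda\), with \(W\) depending on \(D^2v\) and \(Dv\).

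The main computation differentiates the equation twice at a point where \(D^2v\) is diagonal. We compute \(\sum F^{ij}\phi_{ij}\), which up to \(O(\phi+|D\phi|)\) reduces to
\[
\sum_{i\in B}\Bigl[\sum_{\alpha,\beta}F^{\alpha\beta}v_{ii\alpha\beta}\Bigr]
\]
plus quadratic third-order terms. Substituting \(\sum F^{\alpha\beta}v_{ii\alpha\beta}\) from the twice-differentiated equation produces three groups of terms. The first is the concavity term \(-\sigma_2^{pq,rs}v_{pqi}v_{rsi}\) (plus the \(v_iv_j\) contributions), which is handled as in \cite{caffarelli2007constant} using the third-derivative terms \(v_{\alpha\beta i}\) with \(\alpha\in B\), \(\beta\in G\). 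The second consists of terms from \(D_{ii}\) of the gradient dependence \(P(Dv)\). These contain \(v_{ki}v_{kj}\) and \(v_{ki}\) multiplied by third derivatives. For \(i\in B\) the factor \(v_{ki}=v_{ii}\delta_{ki}=O(\phi)\), so most of them are harmless, leaving
\[
-\sum_{i\in B}2\,\delta_{ij}\,v_{ii}^2\ \text{type terms}
\]
and terms \(v_kv_{kii}\), which are absorbed into \(|D\phi|\) via \(\sum_{i\in B}v_{iik}=\phi_k+O(\phi)\). The third is the cross term \(-2\sum_{k}P^{ij}_{,k}v_{ijk\,i}\)-type mixed terms with one gradient derivative. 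These are also controlled by \(\phi_k\).

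I expect the main obstacle to be the sign of the genuinely quadratic third-derivative terms
\[
\sum_{i\in B}\Bigl[-\sigma_2^{pq,rs}(W)\,W_{pq,i}W_{rs,i}\Bigr]
\]
restricted to the mixed block indices. In the \(\sigma_2\) case the quantity \(-\sigma_2^{pq,rs}\xi_{pq}\xi_{rs}=\sum_{p\neq q}\xi_{pq}^2-\sum_{p\neq q}\xi_{pp}\xi_{qq}\) is indefinite. We must show that, combined with the term \(-2\sum_{\alpha\in B,\beta\in G}F^{\beta\beta}v_{\alpha\beta i}^2/\lambda_\beta\) coming from differentiating \(\sigma_{l+1}\), the total is \(\le C(\phi+|D\phi|)\). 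Here the diagonal parts \(v_{\alpha\alpha i}\) with \(\alpha\in B\) are \(O(\phi+|D\phi|)\) after summation, and the off-diagonal mixed parts must be dominated. I would first try the standard CGM structure condition, namely inverse-concavity of \(W\mapsto\sigma_2(W)\) in the form of the convexity of \(A\mapsto\sigma_2(A^{-1})/\operatorname{Tr}(P A^{-1})\) from Theorem \ref{thm:main}. That structure condition is exactly the form the sufficient condition in \cite{caffarelli2007constant} takes for equations with gradient dependence, with \(P=I-\alpha\alpha^T\), \(\alpha=Dv/|Dv|\). This would reduce the quadratic form to the second variation of \(\Phi\) on \(S^n_{++}\), which is nonnegative by Theorem \ref{thm:main}, yielding the required inequality.
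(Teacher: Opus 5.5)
Your reduction (the minimal-rank set is closed, the test function is $\sigma_{l+1}(D^2v)$, and $l\ge1$ because $F(0,p)=-\lambda$) is standard. Your ellipticity computation $F^{ij}=T_1(W)_{ij}>0$ matches the paper's exactly.

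\textbf{The decisive step is asserted, not proved, and the mechanism you propose for it does not work.} The sign of the quadratic third-order terms is the whole difficulty. The sufficient condition in \cite{caffarelli2007constant} is convexity of the \emph{function} $A\mapsto F(A^{-1},p)=\sigma_2(A^{-1})-|p|^2\operatorname{Tr}(Q_\alpha A^{-1})-\lambda$. This fails for the present operator even at admissible points of $\{F=0\}$. Take $n=4$, $D^2v=\operatorname{diag}(q^2,1,1,1)$ and $Dv=qe_1$, so that $W=\operatorname{diag}(0,1,1,1)$ and $\lambda=3$. Let $Z=\operatorname{diag}(1,-\tfrac16,-\tfrac16,-\tfrac16)$ and $H=(D^2v)^{-1}Z(D^2v)^{-1}$. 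Then the second derivative of $A\mapsto F(A^{-1},p)$ at $A=(D^2v)^{-1}$ in the direction $H$ equals $6/q^2-1/2$, which is negative once $q^2>12$.

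What Theorem~\ref{thm:main} can supply is only convexity of the \emph{sublevel set} $\{A\in S^n_{++}:F(A^{-1},p)\le0\}$. That means nonnegativity of the second variation only along directions tangent to the level set. Turning this into $\sum F^{ij}\phi_{ij}\le C(\phi+|D\phi|)$ is precisely the content of the Bian--Guan structural theorem (Theorem~\ref{thm:bg-level}). One must use the once-differentiated equation, which gives $\sum_{a,b}F^{ab}v_{abi}=O(\phi)$ for $i\in B$, to see that the relevant third-order tensors are tangent up to $O(\phi)$. One must also handle the kernel directions of $D^2v$, where $(D^2v)^{-1}$ does not exist. ``Reducing the quadratic form to the second variation of $\Phi$'' skips all of this, so the key inequality is unproved as written.

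\textbf{Level-set convexity itself does not follow from Theorem~\ref{thm:main} alone.} Since $\lambda>0$, for $p\neq0$ the sublevel set is $\{H_{\alpha,\lambda}\le|p|^2\}$, where
\[
H_{\alpha,\lambda}(A)=\frac{\sigma_2(A^{-1})}{\operatorname{Tr}(Q_\alpha A^{-1})}-\frac{\lambda}{\operatorname{Tr}(Q_\alpha A^{-1})}.
\]
On tangent directions, the second variation of $F(A^{-1},p)$ equals $\operatorname{Tr}(Q_\alpha A^{-1})\,D^2H_{\alpha,\lambda}$, not a multiple of $D^2\Phi$. Convexity of the $\lambda$-term requires concavity of $1/\operatorname{Tr}(Q_\alpha A^{-1})$, which is Proposition~\ref{prop:matrix-convex}, and you never use it.

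\textbf{The case $Dv=0$ is not covered.} At such points (for instance at a minimum of $v$), $\alpha=Dv/|Dv|$ is undefined and $\Phi$ is unavailable. The paper treats this case through $\{A:\sigma_2(A^{-1})\le\lambda\}=\bigcap_{\varepsilon>0}\{A:F(A^{-1},\varepsilon\beta)\le0\}$.

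The paper's proof consists of three checks followed by a citation:
\begin{itemize}
\item ellipticity;
\item nondegeneracy, $F(0,p)=-\lambda\neq0$;
\item convexity of $\{A:F(A^{-1},p)\le0\}$ for every $p$, using Theorem~\ref{thm:main}, Proposition~\ref{prop:matrix-convex}, and the intersection argument at $p=0$.
\end{itemize}
It then invokes Theorem~\ref{thm:bg-level}. To repair your argument, either do the same, or carry out the Bian--Guan computation in full with $H_{\alpha,\lambda}$ in place of $\Phi$.
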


\subsection{A general Constant Rank Theorem}
Before giving the proof of Theorem \ref{thm:revised}, for convenience, we state a version of the Bian--Guan structural constant rank theorem used here. It is a simplified form of Theorem 1.2 in \cite{bian2010structural}, adapted to the present setting, which refines the original microscopic convexity principle in \cite{bian2009microscopic}:

\begin{theorem}[Bian--Guan level-set form]\label{thm:bg-level}
Let $\Omega\subset\mathbb{R}^{n}$ be a connected domain, and let $u\in C^{3,1}(\Omega)$ be a convex solution of
\[
F(D^{2}u,Du,u,x)=0,
\]
where
\[
F=F(r,p,z,x)\in C^{2,1}(S^{n}\times\mathbb{R}^{n}\times\mathbb{R}\times\Omega).
\]
Assume the following:
\begin{enumerate}
    \item[(1)] $F$ is elliptic along the solution, namely
    \[
    \bigl(F^{ij}(D^{2}u,Du,u,x)\bigr)>0,
    \qquad
    F^{ij}:=\frac{\partial F}{\partial r_{ij}}.
    \]
    \item[(2)] Along the solution, the nondegeneracy condition holds:
    \[
    F(0,Du(x),u(x),x)\neq0.
    \]
    \item[(3)] For each fixed relevant gradient $p$, the sublevel set
    \[
    \Gamma_{F}(p)
    :=
    \left\{
    (A,z,x)\in S^{n}_{++}\times\mathbb{R}\times\Omega:
    F(A^{-1},p,z,x)\leq0
    \right\}
    \]
    is locally convex near the relevant points.
\end{enumerate}
Then
\[
\operatorname{rank}D^{2}u(x)
\]
is constant in $\Omega$.
\end{theorem}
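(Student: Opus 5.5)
The plan is to deduce the statement from Theorem 1.2 of \cite{bian2010structural}, in its level-set form, so the work consists of matching hypotheses (1)--(3) with the structural conditions used there. For the reader's convenience, I would also sketch the microscopic argument, to make clear where each hypothesis enters. Fix $x_0\in\Omega$ at which $l:=\operatorname{rank}D^2u$ attains its minimum. If $l=n$ there is nothing to prove, since then $D^2u>0$ everywhere. Otherwise, in a small ball $\mathcal O$ around $x_0$, the eigenvalues of $D^2u$ split into a ``good'' group $G$, with $l$ eigenvalues bounded below by a positive constant, and a ``bad'' group $B$, with $n-l$ eigenvalues tending to $0$ at $x_0$. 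As test function I would take the Bian--Guan quantity
\[
\phi=\sigma_{l+1}(D^2u)+\frac{\sigma_{l+2}(D^2u)}{\sigma_{l+1}(D^2u)},
\]
which is $C^{1,1}$ under the assumption $u\in C^{3,1}$, is nonnegative, and vanishes exactly where $\operatorname{rank}D^2u=l$.

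The core step is the differential inequality
\[
F^{ij}\phi_{ij}\le C\bigl(\phi+|\nabla\phi|\bigr)\quad\text{in }\mathcal O .
\]
To obtain it, I would differentiate the equation twice and diagonalize $D^2u$ at the point. Then I would discard all terms controlled by $\phi+|\nabla\phi|$, using the standard estimates $u_{ii}\lesssim\phi$ for $i\in B$ and, modulo $\phi+|\nabla\phi|$, $u_{ij\alpha}\approx0$ for $i,j\in B$. This reduction leaves, for each $i\in B$, a quadratic form in the third derivatives $\{u_{i\alpha\beta}\}$ together with the mixed derivatives $F^{ij,p_k},F^{ij,z},F^{ij,x_k}$ contracted against $u_{ii}$, $u_{i}$, and similar quantities.

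The main obstacle is showing that this leftover quadratic form is nonpositive. Following \cite{bian2010structural}, I would identify it with the second variation of $(A,z,x)\mapsto F(A^{-1},p,z,x)$ at the point $(D^2u|_G)^{-1}$, taken along directions built from the $u_{i\alpha\beta}$ and the $x$-, $z$-variations. The difficulty is that convexity is assumed only for the sublevel set in (3), not for the function itself. The second variation is therefore only known to be $\ge 0$ on the tangent space of the level set $\{F=0\}$. I would handle this with a Lagrange-multiplier correction. Because of the nondegeneracy (2), namely $F(0,Du,u,x)\ne0$, together with the ellipticity (1), the gradient of $F(A^{-1},\dots)$ in the $A$-variables is nonzero and has a definite sign along the solution. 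Hence the level set is a smooth hypersurface there, and the transversal component of the variation can be absorbed by adding a multiple of the once-differentiated equation, which vanishes identically. After this absorption the remaining form is controlled by $\phi+|\nabla\phi|$. I expect the bookkeeping of the $p$-dependence to be the delicate point, because (3) is imposed only for fixed $p$. The $p$-derivative terms should nevertheless appear only multiplied by $u_{ii}$ or $u_{ij\alpha}$ with $i\in B$, and are therefore lower order.

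Once the inequality is established, the strong maximum principle (using (1)) forces $\phi\equiv0$ on $\mathcal O$, since $\phi\ge0$ and $\phi(x_0)=0$. This shows that $\{x:\operatorname{rank}D^2u(x)=l\}$ is open. It is also closed, by lower semicontinuity of the rank and the minimality of $l$. Connectedness of $\Omega$ then gives that the rank is constant, which is the conclusion of Theorem \ref{thm:bg-level}.
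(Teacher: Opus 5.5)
Your proposal takes essentially the paper's approach. The paper gives no independent proof of this statement; it quotes it as a simplified form of Theorem~1.2 in \cite{bian2010structural}, adapted to the present setting, so the substance is the hypothesis matching you describe, and your sketch (Bian--Guan test function, reduction to the good block, strong maximum principle, open--closed argument for the rank) is a reasonable heuristic outline of that cited proof. One nuance: the nonvanishing $A$-gradient of $F(A^{-1},p,z,x)$ already follows from ellipticity alone, while hypothesis (2) enters by excluding $D^2u(x_0)=0$, i.e.\ by guaranteeing $l\geq1$, so that the good block carrying this gradient, and hence your Lagrange-multiplier correction on the level set, is nonempty.
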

\subsection{Proof of Theorem \ref{thm:revised}}
We want to apply Theorem \ref{thm:bg-level} to get Theorem \ref{thm:revised}.

\subsubsection{Writing the Equation in a Form Suitable for the Constant Rank Theorem}\label{subs}
First, it is convenient to see in details how we pass from the equation for $u$ to the equations for $v$.

A straightforward calculation gives
$$
u=-e^{-v}\,,\quad Du=e^{-v}Dv\,,\quad D^2u=e^{-v}D^2v-e^{-v}Dv\otimes Dv\,.
$$
Starting from
\[
\sigma_2(D^2u)=\lambda(-u)^2
\]
and using the homogeneity of $\sigma_2$, we obtain
\begin{equation}\label{eqrewritten}
\sigma_2(D^2v-Dv\otimes Dv)-\lambda=0.
\end{equation}
We have the elementary identity
\begin{equation}\label{eq:s2-identity}
\sigma_{2}(r-p\otimes p)=\sigma_{2}(r)-\operatorname{Tr}(P(p)r)\,,
\end{equation}
where
\[
P(p)=|p|^{2}I-p\otimes p.
\]
Indeed, from
\[
\sigma_{2}(M)=\frac12\left((\operatorname{Tr}M)^{2}-\operatorname{Tr}(M^{2})\right),
\]
and since $p\otimes p$ has rank one and $\sigma_{2}(p\otimes p)=0$, we get
\[
\sigma_{2}(r-p\otimes p)
=\sigma_{2}(r)-|p|^{2}\operatorname{Tr}r+p^{T}rp.
\]
On the other hand,
\[
\operatorname{Tr}(P(p)r)
=\operatorname{Tr}\bigl((|p|^{2}I-p\otimes p)r\bigr)
=|p|^{2}\operatorname{Tr}r-p^{T}rp.
\]
Consequently,
if we define
\begin{equation}\label{eq:F-def-log}
F(r,p):=\sigma_{2}(r)-\operatorname{Tr}(P(p)r)-\lambda,
\end{equation}
we have
\[
F(r,p)=\sigma_{2}(r-p\otimes p)-\lambda,
\qquad r\in S^{n},\quad p\in\mathbb{R}^{n}.
\]
Accordingly, \eqref{eq:theorem-equation} reads
\begin{equation}\label{eq:F-zero-log}
F(D^{2}v,Dv)=0.
\end{equation}

Notice the function $F$ is independent of $z$ and $x$ (so these variables will not produce extra terms when Theorem \ref{thm:bg-level} is applied).

\subsubsection{Ellipticity}\label{Ellipticity}
The ellipticity of equation \eqref{eq:F-zero-log}, or equivalently of equation \eqref{eqrewritten}, is a straightforward consequence of the following easy proposition, which can be found for instance in \cite{salaniphd}.
\begin{proposition}[Proposizione 1.4.1 of \cite{salaniphd}]\label{prop:salaniphd}
Let $1\leq k\leq n$ and $A=(a_{ij})\in S^n$ with eigenvalues $\mu_1,\dots,\mu_n$. Then, the matrix
$$
A_k=(\sigma_k^{ij}(A))\,,\quad\text{where }\sigma_k^{ij}(A)=\frac{\partial\sigma_k(A)}{\partial a_{ij}}\,,
$$
has eigenvalues $\sigma_{k,1}(\mu_1,\dots,\mu_n),\dots,\sigma_{k,n}(\mu_1,\dots,\mu_n)$, where
$$
\sigma_{k,i}(\mu_1,\dots,\mu_n)=\frac{\partial\sigma_k(\mu_1,\dots,\mu_n)}{\partial\mu_i}=\sigma_{k-1}(\hat{\mu}_1,\dots,\hat{\mu}_n)\quad\text{with }\hat{\mu}_j=(1-\delta_{ij})\mu_j\,.
$$
\end{proposition}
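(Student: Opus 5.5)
The statement to prove is Proposition \ref{prop:salaniphd}: the matrix $A_k=(\sigma_k^{ij}(A))$ has eigenvalues $\sigma_{k-1}(\mu\,|\,i)$, where $\mu\,|\,i$ denotes $(\mu_1,\dots,\mu_n)$ with the $i$-th entry replaced by $0$. I would reduce to the diagonal case using orthogonal invariance, and then compute the derivative directly at a diagonal matrix.

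\textbf{Step 1: equivariance.} Since $\sigma_k(Q^TAQ)=\sigma_k(A)$ for every orthogonal $Q$, differentiating in $A$ in the direction $Q^THQ$ gives
\[
\operatorname{Tr}\bigl(A_k(Q^TAQ)\,Q^THQ\bigr)=\operatorname{Tr}\bigl(A_k(A)\,H\bigr)
\]
for all $H\in S^n$. Hence $A_k(Q^TAQ)=Q^TA_k(A)Q$. Here $A_k$ is the symmetric gradient, with the usual convention for off-diagonal entries. So $A_k(A)$ is similar to $A_k(\operatorname{diag}(\mu))$, and it suffices to treat diagonal $A$.

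\textbf{Step 2: diagonal computation.} Let $A=\operatorname{diag}(\mu_1,\dots,\mu_n)$. Write $\sigma_k(A)$ as the sum of the principal $k\times k$ minors of $A$.

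For $i\ne j$, each term of any minor that contains $a_{ij}$ also contains another off-diagonal factor. By expanding the determinant, that factor vanishes at a diagonal matrix, so the first derivative of $\sigma_k$ in $a_{ij}$ is $0$. For example, the contribution of $a_{ij}$ enters through products like $-a_{ij}a_{ji}$ times a cofactor.

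For the diagonal entry $a_{ii}$, differentiating the minors that contain index $i$ gives the sum of the principal $(k-1)$-minors avoiding $i$. At a diagonal matrix this equals $\sigma_{k-1}(\mu\,|\,i)$.

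Thus $A_k(\operatorname{diag}\mu)=\operatorname{diag}\bigl(\sigma_{k-1}(\mu\,|\,i)\bigr)_i$. These are also the partial derivatives $\partial\sigma_k(\mu)/\partial\mu_i$, because $\sigma_k(\mu)=\mu_i\,\sigma_{k-1}(\mu\,|\,i)+\sigma_k(\mu\,|\,i)$.

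\textbf{Main obstacle.} The only delicate point is bookkeeping: one must use the symmetric-derivative convention consistently in Step 1, so that $A_k$ is a genuine symmetric matrix and the conjugation identity holds.

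An alternative route avoids this issue. One can use the Newton transform identity $A_k=T_{k-1}(A)$, namely $A_k=\sum_{j=0}^{k-1}(-1)^j\sigma_{k-1-j}(A)A^j$. This is a polynomial in $A$, so it is diagonal in any eigenbasis of $A$. Its eigenvalue on the $\mu_i$-eigenvector is $\sum_j(-1)^j\sigma_{k-1-j}(\mu)\mu_i^j=\sigma_{k-1}(\mu\,|\,i)$, by the recursion $\sigma_r(\mu)=\sigma_r(\mu\,|\,i)+\mu_i\sigma_{r-1}(\mu\,|\,i)$ and telescoping. I would present the diagonal argument and mention this second route as a check.
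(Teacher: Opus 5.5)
Your proof is correct, but it takes a different route from the paper's.

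\textbf{The paper's route.} It writes $A=C\operatorname{diag}(\mu)C^T$ and runs the chain rule through the spectral decomposition in both directions. With $C$ frozen, differentiating $\sigma_k(C\operatorname{diag}(\mu)C^T)=\sigma_k(\mu)$ in $\mu_r$ gives the diagonal entries $(C^TA_kC)_{rr}=\sigma_{k,r}$. The first-order eigenvalue perturbation formula $\partial\mu_r/\partial a_{ij}=c_{ir}c_{jr}$ then gives the full identity $A_k=C\operatorname{diag}(\sigma_{k,1},\dots,\sigma_{k,n})C^T$.

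\textbf{Your route.} You prove the equivariance $A_k(Q^TAQ)=Q^TA_k(A)Q$. You then compute $A_k$ at a diagonal matrix directly from the principal-minor expansion. So the vanishing of the off-diagonal entries is seen combinatorially, not through derivatives of eigenvalues.

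\textbf{What each buys.} The paper's argument is shorter. It uses nothing about $\sigma_k$ beyond its being a smooth symmetric function of the eigenvalues; it is really the general gradient formula for spectral functions. However, the step $\partial\mu_r/\partial a_{ij}=c_{ir}c_{jr}$ presupposes differentiable eigenvalues, which is guaranteed only where the spectrum is simple. At repeated eigenvalues it tacitly needs a density and continuity argument. Your version is rigorous at every $A$, including repeated eigenvalues. The price is that Step 2 relies on the specific minor structure of $\sigma_k$.

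\textbf{Your alternative.} The route via $A_k=T_{k-1}(A)$ is also sound. It has the advantage of matching the form $F^{ij}=T_1(W)_{ij}$ in which the proposition is used later in the paper. The identity $\partial\sigma_k/\partial a_{ij}=T_{k-1}(A)_{ij}$ itself needs proof, for instance by expanding $\operatorname{adj}(tI+A)$ in powers of $t$ via Jacobi's formula. So keeping it as a check rather than the main argument is the right choice.
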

\begin{proof}
For simplicity of notation, we use the Einstein convention of summation over repeated indices.

Let $C=(c_{ij})$ be the orthogonal matrix which diagonalizes $A$, that is,
$$\text{diag}(\mu_1,\dots,\mu_n)=C^TAC\,,
$$
or equivalently
$$
\mu_r=c_{ir}a_{ij}c_{rj}\,,\quad a_{ij}=c_{ir}\mu_rc_{rj}\,.
$$
With a straightforward calculation we obtain
$$
C^TA_kC=\text{diag}(\sigma_{k,1}(A),\dots,\sigma_{k,n}(A))\,.
$$
Indeed
$$
\sigma_{k,r}(A)=\frac{\partial\sigma_k(\mu_1,\dots,\mu_n)}{\partial\mu_r}=\frac{\partial\sigma_k(\mu_1,\dots,\mu_n)}{\partial a_{ij}}\frac{\partial a_{ij}}{\partial\mu_r}=\sigma_k^{ij}(A)c_{ir}c_{rj}\,,
$$
$$
\sigma_k^{ij}(A)=\frac{\partial\sigma_k(A)}{\partial a_{ij}}=\frac{\partial\sigma_k(A)}{\partial \mu_r}\frac{\partial\mu_r}{\partial a_{ij}}=\sigma_{k,r}(A)c_{ir}c_{rj}\,.
$$
\end{proof}
Now write
\[
W=D^2v-Dv\otimes Dv.
\]
Differentiating $F$ with respect to the Hessian variable gives
\begin{equation}\label{eq:Fij}
F^{ij}(r,p)
=
\frac{\partial\sigma_2(r-p\otimes p)}{\partial r_{ij}}.
\end{equation}
Thus, along the solution,
\[
F^{ij}(D^2v,Dv)=\sigma_2^{ij}(W)=T_1(W)_{ij},
\qquad
T_1(W)=\sigma_1(W)I-W.
\]
By the admissibility assumption \eqref{eq:admissible}, $W\in\Gamma_2$. Proposition \ref{prop:salaniphd} therefore implies that every eigenvalue of $T_1(W)$ is strictly positive. Hence
\[
\bigl(F^{ij}(D^2v,Dv)\bigr)=T_1(W)>0,
\]
which is precisely the ellipticity condition in Theorem \ref{thm:bg-level}.

\subsubsection{The Nondegeneracy Condition}\label{nondegeneracy}

For every $p\in\mathbb{R}^{n}$,
\[
F(0,p)=\sigma_{2}(0)-\operatorname{Tr}(P(p)0)-\lambda=-\lambda.
\]
Since $\lambda>0$,
\begin{equation}\label{eq:nondegenerate}
F(0,p)=-\lambda\neq0.
\end{equation}
Thus the nondegeneracy condition in Theorem \ref{thm:bg-level} is satisfied.

\subsubsection{Convexity of the Sublevel Set: The Case $p\neq0$}

Fix $p\neq0$. Set
\[
\alpha=\frac{p}{|p|},
\qquad
Q_{\alpha}=I-\alpha\otimes\alpha.
\]
Then
\[
P(p)=|p|^{2}Q_{\alpha}.
\]
Consider the sublevel set
\begin{equation}\label{eq:Gamma-p}
\Gamma_{p}:=\{A\in S^{n}_{++}:F(A^{-1},p)\leq0\}.
\end{equation}
By \eqref{eq:F-def-log}, the condition $F(A^{-1},p)\leq0$ is equivalent to
\[
\sigma_{2}(A^{-1})-|p|^{2}\operatorname{Tr}(Q_{\alpha}A^{-1})-\lambda\leq0.
\]
Since $A^{-1}>0$, $Q_{\alpha}\geq0$, and $Q_{\alpha}\neq0$,
\begin{equation}\label{eq:positive-denominator}
\operatorname{Tr}(Q_{\alpha}A^{-1})>0.
\end{equation}
Thus the previous inequality is equivalent to
\begin{equation}\label{eq:H-sublevel}
H_{\alpha,\lambda}(A)
:=
\frac{\sigma_{2}(A^{-1})-\lambda}
{\operatorname{Tr}(Q_{\alpha}A^{-1})}
\leq |p|^{2}.
\end{equation}

Combining Theorem \ref{thm:main} with Proposition \ref{prop:matrix-convex}, the function
\begin{equation}\label{eq:H-convex}
A\longmapsto
\frac{\sigma_{2}(A^{-1})}{\operatorname{Tr}(Q_{\alpha}A^{-1})}
-\frac{\lambda}{\operatorname{Tr}(Q_{\alpha}A^{-1})}
=
\frac{\sigma_{2}(A^{-1})-\lambda}{\operatorname{Tr}(Q_{\alpha}A^{-1})}
\end{equation}
is convex on $S^{n}_{++}$: the first term is convex by Theorem \ref{thm:main}, while the second is convex because $1/\operatorname{Tr}(Q_\alpha A^{-1})$ is concave. Therefore $\Gamma_{p}$ is a sublevel set of the convex function $H_{\alpha,\lambda}$ and is convex, hence locally convex.

\subsubsection{Convexity of the Sublevel Set: The Case $p=0$}

When $p=0$, the vector $\alpha=p/|p|$ is not defined.
We handle it by a limiting intersection argument.

Fix an arbitrary unit vector $\beta\in\mathbb{R}^{n}$ and set
\[
p_{\varepsilon}=\varepsilon\beta,\qquad \varepsilon>0.
\]
By the previous step, $\Gamma_{p_{\varepsilon}}$ is convex for every $\varepsilon>0$. We claim that
\begin{equation}\label{eq:intersection}
\Gamma_{0}=\bigcap_{\varepsilon>0}\Gamma_{p_{\varepsilon}}.
\end{equation}
Indeed, $A\in\Gamma_{p_{\varepsilon}}$ is equivalent to
\[
\sigma_{2}(A^{-1})-\lambda
\leq
\varepsilon^{2}\operatorname{Tr}(Q_{\beta}A^{-1}).
\]
If $A\in\bigcap_{\varepsilon>0}\Gamma_{p_{\varepsilon}}$, then this inequality holds for every $\varepsilon>0$. Letting $\varepsilon\to0$ yields
\[
\sigma_{2}(A^{-1})-\lambda\leq0,
\]
which is exactly $A\in\Gamma_{0}$.

Conversely, if $A\in\Gamma_{0}$, then
\[
\sigma_{2}(A^{-1})-\lambda\leq0.
\]
Since
\[
\varepsilon^{2}\operatorname{Tr}(Q_{\beta}A^{-1})\geq0,
\]
we immediately get
\[
\sigma_{2}(A^{-1})-\lambda
\leq
\varepsilon^{2}\operatorname{Tr}(Q_{\beta}A^{-1}),
\]
so $A\in\Gamma_{p_{\varepsilon}}$ for every $\varepsilon>0$. This proves \eqref{eq:intersection}.

The intersection of any family of convex sets is convex. Hence $\Gamma_{0}$ is convex. Therefore, for every $p\in\mathbb{R}^{n}$, the set
\[
\Gamma_{p}
=
\{A\in S^{n}_{++}:F(A^{-1},p)\leq0\}
\]
is convex.

\subsubsection{Application of the Bian--Guan Level-Set Constant Rank Theorem}

Since $F$ is independent of $z$ and $x$, convexity in the $A$ variable implies the local convexity of
\[
\{(A,z,x)\in S^{n}_{++}\times\mathbb{R}\times K:
F(A^{-1},p,z,x)\leq0\}
\]
in the variables $(A,z,x)$. Combining this with the ellipticity verified in Subsection \ref{Ellipticity}, the nondegeneracy condition \eqref{eq:nondegenerate}, the convexity $D^{2}v\geq0$, and the regularity $v\in C^{4}(K)$, all assumptions of Theorem \ref{thm:bg-level} are satisfied. Hence
\[
\operatorname{rank}D^{2}v(x)
\]
is constant in the connected domain $K$. This proves Theorem \ref{thm:revised}.

\section{Proof of Theorem \ref{thm:strict-convex}}

In this section we prove Theorem \ref{thm:strict-convex} by a real convex-envelope argument.  The proof has three steps.  First, the convex envelope of $v=-\log(-u)$ is shown to preserve the transformed differential inequality in the admissible-test-function viscosity sense.  Second, an admissible replacement and the variational characterization of the eigenvalue force the convex envelope to coincide with $v$.  Finally, Theorem \ref{thm:revised} and the properness of $v$ rule out every deficient Hessian rank.

Throughout this section, let $u$ be the eigenfunction in Theorem \ref{thm:strict-convex}, set
\[
\lambda=\lambda(K),\qquad v=-\log(-u),
\]
and multiply $u$ by a positive constant so that
\begin{equation}\label{eq:ce-normalization}
\inf_K v=0.
\end{equation}
Then $v\geq0$ in $K$, and
\begin{equation}\label{eq:ce-blow-up}
v(x)\longrightarrow+\infty\qquad\text{as }x\longrightarrow\partial K.
\end{equation}
Moreover,
\begin{equation}\label{eq:ce-transformed}
\sigma_2\bigl(D^2v-Dv\otimes Dv\bigr)=\lambda
\qquad\text{in }K.
\end{equation}

\subsection{The real convex envelope}

We first record the elementary compactness property of convex envelopes that will be used below.

\begin{lemma}\label{lem:ce-proper}
Let $D\subset\mathbb R^n$ be a bounded convex domain and let $f\in C(D)$ be bounded from below.  Assume that
\[
f(x)\longrightarrow+\infty\qquad\text{as }x\longrightarrow\partial D.
\]
Define
\begin{equation}\label{eq:ce-definition}
f^{\operatorname{co}}(x)
=
\inf\left\{
\sum_{\alpha=1}^{m}\theta_\alpha f(x_\alpha):
\begin{array}{l}
1\leq m\leq n+1,\quad \theta_\alpha\geq0,\quad
\sum_{\alpha=1}^{m}\theta_\alpha=1,\\[1mm]
x_\alpha\in D,\quad x=\sum_{\alpha=1}^{m}\theta_\alpha x_\alpha
\end{array}
\right\}.
\end{equation}
Then $f^{\operatorname{co}}$ is finite, convex, and continuous in $D$, and
\[
f^{\operatorname{co}}\leq f,
\qquad
\inf_D f^{\operatorname{co}}=\inf_D f.
\]
Furthermore,
\[
f^{\operatorname{co}}(x)\longrightarrow+\infty
\qquad\text{as }x\longrightarrow\partial D,
\]
and for every $x\in D$ the infimum in \eqref{eq:ce-definition} is attained by a representation with positive weights.
\end{lemma}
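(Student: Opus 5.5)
We will prove the assertions of Lemma~\ref{lem:ce-proper} in the order they are stated. Finiteness and the bounds come first. Let $m_0=\inf_D f>-\infty$. Every admissible combination in \eqref{eq:ce-definition} is $\geq m_0$, and the trivial representation $m=1$, $x_1=x$ is admissible. Hence
\[
m_0\leq f^{\operatorname{co}}\leq f .
\]
Taking infima gives $\inf f^{\operatorname{co}}\geq m_0$, and $f^{\operatorname{co}}\leq f$ gives the reverse inequality, so $\inf_D f^{\operatorname{co}}=\inf_D f$.

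Convexity follows from Carathéodory's theorem. The infimum over combinations with arbitrarily many points equals the one with at most $n+1$ points, because any representation of $x$ can be reduced to $n+1$ points without increasing $\sum\theta_\alpha f(x_\alpha)$. This is the usual argument on the point $(x,\sum\theta_\alpha f(x_\alpha))$ in the convex hull of the epigraph, which lives in $\mathbb R^{n+1}$. With that in hand, $f^{\operatorname{co}}$ is the largest convex minorant of $f$: for $x,y\in D$, concatenating near-optimal representations of $x$ and $y$ gives a representation of $(1-t)x+ty$, which yields the convexity inequality. A finite convex function on an open convex set is continuous.

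Properness needs a little care. Fix $M$ and choose a compact convex $K_M\subset D$ with $f>M'$ outside $K_M$, where $M'$ will be chosen large. Take $x$ close to $\partial D$ and any representation $x=\sum\theta_\alpha x_\alpha$. Since $x\notin K_M$ and $K_M$ is convex, some point $x_\alpha$ lies outside $K_M$. The difficulty is that this point might carry only a small weight $\theta_\alpha$. To control this, separate $x$ from $K_M$ by a hyperplane $\ell(y)=c$, with $\ell(x)-c=\delta(x)$, a quantity proportional to the distance from $x$ to $K_M$ in the relevant direction, and with $\ell\leq c$ on $K_M$. Since $\ell\leq C_D$ on the bounded set $D$, we get
\[
\delta(x)\leq\sum\theta_\alpha(\ell(x_\alpha)-c)\leq C\sum_{x_\alpha\notin K_M}\theta_\alpha .
\]
This gives $\sum\theta_\alpha f(x_\alpha)\geq m_0+(M'-m_0)\,\delta/C$. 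This bound is only useful if $\delta$ stays bounded below, which is why we choose $K_M$ to depend on $M$. As $x\to\partial D$ we may take $K_M=\{y:\operatorname{dist}(y,\partial D)\geq\eta\}$ with $\eta$ small. Then for $x$ within $\eta/2$ of $\partial D$ we have $\delta\gtrsim\eta$, while $M'\to\infty$ as $\eta\to0$ by the blow-up of $f$.

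I expect this properness step to be the main obstacle. It needs the quantitative estimate comparing how close $x$ is to $\partial D$ with the size of $M'(\eta)$. The cleaner route is to fix $\eta$ and the constant $M'=M'(\eta)$ first, and only then let $x\to\partial D$. Doing so, the distance of $x$ from $K_\eta$ tends to $\eta$, not to $0$, and this gives a lower bound of order $M'(\eta)\eta/\operatorname{diam}D$. That bound is not obviously large. So I will instead use the $(n+1)$-point structure: points $x_\alpha\in K_\eta$ satisfy $\ell(x_\alpha)\leq c$, which forces the weight on points outside $K_\eta$ to tend to $1$ as $x\to\partial D$ when the separating direction is chosen along the normal at the nearby boundary point.

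Finally, attainment. Take a minimizing sequence of representations with $m\leq n+1$ points, and pass to a subsequence in which the weights and points converge in $\overline D$. By properness, any limit point $x_\alpha\in\partial D$ must have $\theta_\alpha\to0$. The terms with $\theta_\alpha f(x_\alpha)\geq\theta_\alpha m_0$ contribute at least $0$ in the limit, so they can be dropped after renormalizing the weights, and the points remaining in $D$ still represent $x$. The only subtle part is whether dropping terms whose weight tends to $0$ while the point approaches $\partial D$ keeps the barycenter equal to $x$. It does, because $D$ is bounded, so the contributions $\theta_\alpha x_\alpha$ tend to $0$. Lower semicontinuity, which here is continuity of $f$ in $D$, then shows the limit attains the infimum. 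Finally, discarding zero weights leaves a representation with positive weights.
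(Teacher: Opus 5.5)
Your argument is correct. Finiteness, the infimum identity, convexity via Carath\'eodory, and the attainment step are essentially the paper's. The paper first normalizes $f\geq0$ instead of carrying $m_0=\inf_D f$, which is cosmetic.

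The boundary blow-up is where your route genuinely differs. The paper argues by contradiction. If $x_j\to x_0\in\partial D$ with $f^{\operatorname{co}}(x_j)\leq M$, it takes almost-minimizing $(n+1)$-point representations and extracts convergent weights. Components with positive limiting weight have bounded $f$ and so stay in a compact subset of $D$. The remaining components drop out of the barycenter, which forces $x_0\in D$. This really uses the bound $m\leq n+1$, and the same mechanism is then reused for attainment.

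Your final argument is a direct estimate that does not use the number of points at all, so the phrase ``use the $(n+1)$-point structure'' misdescribes it. What makes it work is the choice $\ell(y)=\nu\cdot y$, with $\nu$ the outer normal at the boundary point $\bar x$ nearest to $x$. Set $h=\nu\cdot\bar x=\sup_D\ell$. Then $\ell(x)=h-\operatorname{dist}(x,\partial D)$, $\ell\leq h$ on $D$, and $\ell\leq h-\eta$ on $K_\eta$. So the constant in your inequality $\delta\leq C\sum_{x_\alpha\notin K_\eta}\theta_\alpha$ is $C=\eta$, not something of order $\operatorname{diam}D$, while $\delta=\eta-\operatorname{dist}(x,\partial D)$. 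Let $M'(\eta)=\inf\{f(y):y\in D,\ \operatorname{dist}(y,\partial D)<\eta\}$. Then every representation satisfies
\[
\sum_\alpha\theta_\alpha f(x_\alpha)\;\geq\; m_0+\Bigl(1-\frac{\operatorname{dist}(x,\partial D)}{\eta}\Bigr)\bigl(M'(\eta)-m_0\bigr).
\]
Hence $\liminf_{x\to\partial D}f^{\operatorname{co}}(x)\geq M'(\eta)$ for each $\eta$, and $M'(\eta)\to\infty$ as $\eta\to0$.

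Your route gives an explicit quantitative lower bound, valid even for the unrestricted convex envelope. The price is that it uses the supporting-hyperplane geometry of $D$. The paper's softer compactness argument is shorter and handles blow-up and attainment with a single mechanism.

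In a write-up, discard the first attempt, whose crude constant only yields a bound of order $M'(\eta)\eta/\operatorname{diam}D$, as you noticed. Then state the computation above explicitly.
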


\begin{proof}
Adding a constant to $f$ adds the same constant to $f^{\operatorname{co}}$, so we may assume that $f\geq0$.  Carath\'eodory's theorem gives the restriction to at most $n+1$ points.  Taking the one-point representation gives finiteness and $f^{\operatorname{co}}\leq f$.  Since the constant function $\inf_D f$ is a convex minorant of $f$, one has $f^{\operatorname{co}}\geq\inf_Df$, and therefore the two infima agree.  Convexity follows by combining admissible representations, and every finite convex function on an open convex set is continuous.

We prove the boundary blow-up.  Suppose that $x_j\to x_0\in\partial D$ while $f^{\operatorname{co}}(x_j)\leq M$.  Choose almost minimizing representations, with zero weights allowed,
\[
x_j=\sum_{\alpha=1}^{n+1}\theta_{\alpha,j}x_{\alpha,j},
\qquad
\sum_{\alpha=1}^{n+1}\theta_{\alpha,j}f(x_{\alpha,j})\leq M+\frac1j.
\]
After passing to a subsequence, assume $\theta_{\alpha,j}\to\theta_\alpha$.  If $\theta_\alpha>0$, then $\theta_{\alpha,j}\geq\theta_\alpha/2$ for large $j$, and hence $f(x_{\alpha,j})$ is uniformly bounded.  The boundary blow-up of $f$ implies that its finite sublevel sets are compactly contained in $D$.  Thus, after another subsequence,
\[
x_{\alpha,j}\longrightarrow y_\alpha\in D
\qquad\text{whenever }\theta_\alpha>0.
\]
For the remaining indices, boundedness of $D$ gives $\theta_{\alpha,j}x_{\alpha,j}\to0$.  Passing to the limit in the barycentric identity yields
\[
x_0=\sum_{\theta_\alpha>0}\theta_\alpha y_\alpha,
\qquad
\sum_{\theta_\alpha>0}\theta_\alpha=1.
\]
The right-hand side belongs to the open convex set $D$, a contradiction.

The attainment statement follows from the same compactness argument.  Indeed, fix $x\in D$ and choose a minimizing sequence of representations.  After taking a subsequence, the weights converge.  Every component whose limiting weight is positive remains in a fixed compact subset of $D$ and therefore converges after a further subsequence.  Components whose limiting weight is zero make no contribution to the limiting barycenter.  Since $f\geq0$, discarding them cannot increase the limiting weighted value.  The resulting admissible representation has value at most $f^{\operatorname{co}}(x)$, while the reverse inequality follows from the definition.  Hence equality holds, and zero weights may be removed.
\end{proof}

Apply Lemma \ref{lem:ce-proper} to $f=v$ and write
\[
w=v^{\operatorname{co}}.
\]
Then
\begin{equation}\label{eq:ce-basic}
w\leq v,
\qquad
w\ \text{is convex in }K,
\qquad
w(x)\longrightarrow+\infty\ \text{as }x\longrightarrow\partial K,
\end{equation}
and
\begin{equation}\label{eq:ce-infimum}
\inf_Kw=\inf_Kv=0.
\end{equation}

\subsection{Preservation of the transformed inequality}

For $p\in\mathbb R^n$ and $\mu>0$, set
\begin{equation}\label{eq:ce-level-set}
\mathcal K_{p,\mu}
=
\left\{
A\in S^n_{++}:
\sigma_2(A^{-1}-p\otimes p)\leq\mu
\right\}.
\end{equation}
By the identity \eqref{eq:s2-identity}, this is the same inverse sublevel set considered in the proof of Theorem \ref{thm:revised}.  The argument leading to \eqref{eq:H-convex} and \eqref{eq:intersection}, with $\lambda(K)$ replaced by $\mu$, shows that $\mathcal K_{p,\mu}$ is convex for every $p\in\mathbb R^n$ and every $\mu>0$.

\begin{lemma}\label{lem:ce-viscosity}
The function $w=v^{\operatorname{co}}$ satisfies
\begin{equation}\label{eq:ce-viscosity-ineq}
\sigma_2(D^2w-Dw\otimes Dw)\leq\lambda
\end{equation}
in the admissible-test-function viscosity sense.  More precisely, if $\phi\in C^2(K)$ touches $w$ from below at $x_0\in K$ and
\begin{equation}\label{eq:ce-test-admissible}
D^2\phi(x_0)-D\phi(x_0)\otimes D\phi(x_0)\in\Gamma_2,
\end{equation}
then
\[
\sigma_2\bigl(D^2\phi(x_0)-D\phi(x_0)\otimes D\phi(x_0)\bigr)
\leq\lambda.
\]
\end{lemma}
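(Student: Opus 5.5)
The plan follows the classical convex-envelope strategy of Alvarez--Lasry--Lions. The key input is the convexity of the inverse sublevel sets $\mathcal K_{p,\mu}$ in \eqref{eq:ce-level-set}.

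\textbf{Setup.} Let $\phi$ touch $w$ from below at $x_0$, and assume \eqref{eq:ce-test-admissible}. By Lemma \ref{lem:ce-proper}, the infimum defining $w(x_0)$ is attained:
\[
x_0=\sum_{\alpha=1}^m\theta_\alpha x_\alpha,\qquad \theta_\alpha>0,\qquad w(x_0)=\sum_\alpha\theta_\alpha v(x_\alpha).
\]
Since $w$ is convex and $w\le v$, the affine support of $w$ at $x_0$ supports $v$ at each $x_\alpha$. Hence $v=w$ at every $x_\alpha$, $w$ is affine on the convex hull of the $x_\alpha$, and, since $v$ is smooth and $\phi$ touches from below, $Dv(x_\alpha)=D\phi(x_0)=:p$ for all $\alpha$.

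\textbf{Second-order inequality.} Consider the function
\[
\Phi(y_1,\dots,y_m)=\sum_\alpha\theta_\alpha v(x_\alpha+y_\alpha)-\phi\Bigl(x_0+\sum_\alpha\theta_\alpha y_\alpha\Bigr).
\]
It satisfies $\Phi\ge \sum\theta_\alpha v(x_\alpha+y_\alpha) - w(x_0+\sum\theta_\alpha y_\alpha)\ge0$ and $\Phi(0)=0$, so it has a minimum at $0$. The Hessian condition there, tested with $y_\alpha=\xi_\alpha$, gives
\[
\sum_\alpha\theta_\alpha\langle D^2v(x_\alpha)\xi_\alpha,\xi_\alpha\rangle\ \ge\ \langle D^2\phi(x_0)\xi,\xi\rangle,\qquad \xi=\sum\theta_\alpha\xi_\alpha .
\]
Here $D^2v(x_\alpha)$ need not be positive definite, so it is first made so: replace $v$ by $v+\varepsilon|x|^2/2$, or equivalently work with $A_\alpha=(D^2v(x_\alpha)+\varepsilon I)$, and also perturb $\phi$ so that $B=D^2\phi(x_0)+\varepsilon I>0$. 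Optimizing over the splittings $\xi_\alpha$ (the standard inf-convolution identity) gives
\[
B^{-1}\ \ge\ \sum_\alpha\theta_\alpha A_\alpha^{-1}.
\]

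\textbf{Conclusion via the convex sublevel set.} Each $A_\alpha^{-1}$, for $\varepsilon=0$, lies in the sublevel set $\mathcal K_{p,\lambda}$, since $\sigma_2(D^2v-p\otimes p)=\lambda$ there. With the $\varepsilon$-perturbation, the $A_\alpha^{-1}$ instead lie in $\mathcal K_{p,\lambda+o(1)}$. Convexity of $\mathcal K_{p,\mu}$ then puts $C:=\sum\theta_\alpha A_\alpha^{-1}$ in $\mathcal K_{p,\lambda+o(1)}$. Set $X=B-p\otimes p$ and $Y=C^{-1}-p\otimes p$; then $B\le C^{-1}$ gives $X\le Y$.

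Now $X\in\Gamma_2$ by \eqref{eq:ce-test-admissible} and small $\varepsilon$, and $\Gamma_2+S^n_+\subset\Gamma_2$, so $Y\in\Gamma_2$. On $\Gamma_2$, $\sigma_2$ is monotone increasing along positive semidefinite directions (its gradient $T_1>0$ there, and the segment from $X$ to $Y$ stays in $\Gamma_2$). Hence
\[
\sigma_2(X)\le\sigma_2(Y)\le\lambda+o(1),
\]
and letting $\varepsilon\to0$ gives the claim.

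\textbf{Main obstacle.} The hard part is the degenerate case, where $D^2v(x_\alpha)$ or $D^2\phi(x_0)$ is only semidefinite. A perturbation $\varepsilon I$ added to $D^2v$ changes $\sigma_2(D^2v-p\otimes p)$ by $O(\varepsilon)$, which is harmless by continuity. I also need the points $x_\alpha$ to satisfy $D^2v(x_\alpha)\ge0$ for the inverses to make sense. This holds because $v$ is touched from below by an affine function at $x_\alpha$, so $D^2v(x_\alpha)\ge0$. The monotonicity step and the inf-convolution matrix inequality are routine. If $m=1$, the argument reduces to direct comparison: $D^2\phi(x_0)\le D^2v(x_0)$ plus monotonicity of $\sigma_2$ on $\Gamma_2$.
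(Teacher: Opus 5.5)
Your proposal is correct and follows the paper's proof step for step: attainment of the envelope with positive weights, the affine-support argument giving $Dv(x_\alpha)=p$ and $D^2v(x_\alpha)\ge0$, and the second variation of $\sum_\alpha\theta_\alpha v(y_\alpha)-\phi(\sum_\alpha\theta_\alpha y_\alpha)$ with the splitting $\xi_\alpha=(D^2v(x_\alpha)+\delta I)^{-1}S_\delta^{-1}\xi$ giving $D^2\phi(x_0)\le S_\delta^{-1}$; then convexity of $\mathcal K_{p,\mu_\delta}$ and monotonicity of $\sigma_2$ on $\Gamma_2$ conclude. The one superfluous step is perturbing $\phi$ to make $B>0$, which cannot be done with $\varepsilon\to0$ when $D^2\phi(x_0)$ has a negative eigenvalue; it is not needed, because the splitting identity gives $B\le S_\delta^{-1}$ with no sign condition on $B$, and that is exactly the form you use in your final step.
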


\begin{proof}
By Lemma \ref{lem:ce-proper}, there exist $m\leq n+1$, points $x_\alpha\in K$, and positive weights $\theta_\alpha$ such that
\begin{equation}\label{eq:ce-contact}
x_0=\sum_{\alpha=1}^{m}\theta_\alpha x_\alpha,
\qquad
w(x_0)=\sum_{\alpha=1}^{m}\theta_\alpha v(x_\alpha),
\qquad
\sum_{\alpha=1}^{m}\theta_\alpha=1.
\end{equation}
Set
\[
p=D\phi(x_0).
\]
Since $w$ is convex and $\phi$ touches it from below, $p\in\partial w(x_0)$.  Hence the affine function
\[
\ell(x)=w(x_0)+p\cdot(x-x_0)
\]
satisfies $\ell\leq w\leq v$ in $K$.  Using \eqref{eq:ce-contact} and the affinity of $\ell$, we obtain
\[
0=\sum_{\alpha=1}^{m}\theta_\alpha\bigl(v(x_\alpha)-\ell(x_\alpha)\bigr).
\]
Every summand is nonnegative, and therefore
\begin{equation}\label{eq:ce-contact-derivatives}
Dv(x_\alpha)=p,
\qquad
H_\alpha:=D^2v(x_\alpha)\geq0.
\end{equation}

Let $B=D^2\phi(x_0)$.  The function
\[
G(y_1,\ldots,y_m)
=
\sum_{\alpha=1}^{m}\theta_\alpha v(y_\alpha)
-
\phi\left(\sum_{\alpha=1}^{m}\theta_\alpha y_\alpha\right)
\]
has a local minimum at $(x_1,\ldots,x_m)$.  Its second variation gives
\begin{equation}\label{eq:ce-second-variation}
\sum_{\alpha=1}^{m}\theta_\alpha
\langle H_\alpha\xi_\alpha,\xi_\alpha\rangle
-
\left\langle
B\sum_{\alpha=1}^{m}\theta_\alpha\xi_\alpha,
\sum_{\alpha=1}^{m}\theta_\alpha\xi_\alpha
\right\rangle
\geq0
\end{equation}
for all $\xi_1,\ldots,\xi_m\in\mathbb R^n$.

For $\delta>0$, put
\[
H_{\alpha,\delta}=H_\alpha+\delta I,
\qquad
S_\delta=\sum_{\alpha=1}^{m}\theta_\alpha H_{\alpha,\delta}^{-1}.
\]
Adding $\delta\sum_\alpha\theta_\alpha|\xi_\alpha|^2$ to the left-hand side of \eqref{eq:ce-second-variation}, and choosing
\[
\xi_\alpha=H_{\alpha,\delta}^{-1}S_\delta^{-1}\xi,
\]
we obtain
\begin{equation}\label{eq:ce-harmonic-bound}
B\leq S_\delta^{-1}.
\end{equation}

For all sufficiently small $\delta>0$, define
\[
\mu_\delta
=
\max_{1\leq\alpha\leq m}
\sigma_2(H_{\alpha,\delta}-p\otimes p)>0.
\]
By \eqref{eq:ce-transformed} and \eqref{eq:ce-contact-derivatives},
\begin{equation}\label{eq:ce-mu-limit}
\mu_\delta\longrightarrow\lambda
\qquad\text{as }\delta\downarrow0.
\end{equation}
Moreover,
\[
H_{\alpha,\delta}^{-1}\in\mathcal K_{p,\mu_\delta}
\qquad(1\leq\alpha\leq m).
\]
The convexity of $\mathcal K_{p,\mu_\delta}$ gives $S_\delta\in\mathcal K_{p,\mu_\delta}$, and hence
\begin{equation}\label{eq:ce-harmonic-pde}
\sigma_2(S_\delta^{-1}-p\otimes p)\leq\mu_\delta.
\end{equation}

By \eqref{eq:ce-test-admissible}, $B-p\otimes p\in\Gamma_2$.  In view of \eqref{eq:ce-harmonic-bound},
\[
S_\delta^{-1}-p\otimes p
=
(B-p\otimes p)+(S_\delta^{-1}-B),
\qquad
S_\delta^{-1}-B\geq0.
\]
The function $\sigma_2$ is increasing on $\Gamma_2$ in nonnegative definite directions.  Therefore
\[
\sigma_2(B-p\otimes p)
\leq
\sigma_2(S_\delta^{-1}-p\otimes p)
\leq\mu_\delta.
\]
Letting $\delta\downarrow0$ and using \eqref{eq:ce-mu-limit} proves the lemma.
\end{proof}

Define
\begin{equation}\label{eq:ce-uhat}
\widehat u=-e^{-w}.
\end{equation}
By \eqref{eq:ce-basic},
\[
\widehat u<0\quad\text{in }K,
\qquad
\widehat u=0\quad\text{on }\partial K.
\]

\begin{lemma}\label{lem:ce-exponentiation}
The function $\widehat u$ satisfies
\begin{equation}\label{eq:ce-u-viscosity}
\sigma_2(D^2\widehat u)\leq\lambda(-\widehat u)^2
\end{equation}
in the admissible-test-function viscosity sense.
\end{lemma}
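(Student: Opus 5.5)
The argument is a direct transfer of Lemma \ref{lem:ce-viscosity} through the exponential change of variables. We fix a test function $\psi\in C^2(K)$ touching $\widehat u$ from below at $x_0\in K$ and satisfying the admissibility condition $D^2\psi(x_0)\in\Gamma_2$. We then have to show $\sigma_2(D^2\psi(x_0))\le\lambda(-\widehat u(x_0))^2$.

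First we make sure the inverse change of variables is available. Since $\widehat u(x_0)<0$ and $\psi(x_0)=\widehat u(x_0)$, continuity gives $\psi<0$ in a neighbourhood $U$ of $x_0$. On $U$ we set
\[
\phi=-\log(-\psi).
\]
The map $s\mapsto-\log(-s)$ is strictly increasing on $(-\infty,0)$. Hence $\psi\le\widehat u$ near $x_0$ with equality at $x_0$ turns into $\phi\le w$ near $x_0$ with equality at $x_0$, so $\phi$ touches $w$ from below at $x_0$. The definition in Lemma \ref{lem:ce-viscosity} only needs local touching; if a globally defined test function is wanted, we multiply $\phi$ by a cutoff and extend it, which changes nothing at $x_0$.

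Next we transform the derivatives. Inverting $\phi=-\log(-\psi)$ gives $\psi=-e^{-\phi}$, so the computation of Subsection \ref{subs} applies verbatim:
\[
D^2\psi=e^{-\phi}\bigl(D^2\phi-D\phi\otimes D\phi\bigr).
\]
Since $\Gamma_2$ is a cone and $e^{-\phi(x_0)}>0$, the condition $D^2\psi(x_0)\in\Gamma_2$ is equivalent to
\[
D^2\phi(x_0)-D\phi(x_0)\otimes D\phi(x_0)\in\Gamma_2 .
\]
This is exactly the admissibility hypothesis \eqref{eq:ce-test-admissible}, so Lemma \ref{lem:ce-viscosity} applies to $\phi$ and yields
\[
\sigma_2\bigl(D^2\phi(x_0)-D\phi(x_0)\otimes D\phi(x_0)\bigr)\le\lambda .
\]

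Finally we use the homogeneity of degree two of $\sigma_2$. It gives
\[
\sigma_2(D^2\psi(x_0))=e^{-2\phi(x_0)}\,\sigma_2\bigl(D^2\phi-D\phi\otimes D\phi\bigr)(x_0)\le\lambda e^{-2w(x_0)}=\lambda(-\widehat u(x_0))^2,
\]
which is the claimed inequality. There is no genuine obstacle: the only points requiring care are the local negativity of $\psi$, which makes $\phi$ well defined, and the observation that admissibility passes between $\psi$ and $\phi$ because $\Gamma_2$ is invariant under positive scaling.
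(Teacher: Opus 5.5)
Your proposal is correct and follows the paper's proof essentially step for step. You set $\phi=-\log(-\psi)$ near $x_0$, use $D^2\psi=e^{-\phi}(D^2\phi-D\phi\otimes D\phi)$ to transfer admissibility, apply Lemma~\ref{lem:ce-viscosity}, and conclude by the degree-two homogeneity of $\sigma_2$; you only spell out the monotonicity and locality details that the paper leaves implicit.
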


\begin{proof}
Let $\psi\in C^2(K)$ touch $\widehat u$ from below at $x_0$, and assume $D^2\psi(x_0)\in\Gamma_2$.  Since $\widehat u(x_0)<0$, the function
\[
\phi=-\log(-\psi)
\]
is well defined near $x_0$ and touches $w$ from below there.  Moreover,
\[
D^2\psi=e^{-\phi}(D^2\phi-D\phi\otimes D\phi),
\]
so $D^2\phi-D\phi\otimes D\phi\in\Gamma_2$ at $x_0$.  Lemma \ref{lem:ce-viscosity} and the homogeneity of $\sigma_2$ give
\[
\sigma_2(D^2\psi)(x_0)
\leq
\lambda(-\widehat u(x_0))^2.
\]
\end{proof}

\subsection{Admissible replacement and eigenvalue rigidity}

We use the following standard consequence of the weak Dirichlet and variational theories for Hessian equations.  The approximation involved in the proof is the usual one by smooth uniformly $(k-1)$-convex domains, smooth positive right-hand sides, and weak continuity of Hessian measures; see \cite{caffarelli1985dirichlet,wang1994class,salani2012cc}.

\begin{lemma}[Admissible replacement]\label{lem:ce-replacement}
Let $h\in C(\overline K)$ satisfy
\[
h<0\quad\text{in }K,
\qquad
h=0\quad\text{on }\partial K.
\]
Assume that, for some $\Lambda>0$,
\begin{equation}\label{eq:ce-general-super}
\sigma_2(D^2h)\leq\Lambda(-h)^2
\end{equation}
in the admissible-test-function viscosity sense.  Then
\begin{equation}\label{eq:ce-eigenvalue-bound}
\lambda(K)\leq\Lambda.
\end{equation}
If $\Lambda=\lambda(K)$, then $h$ is a positive multiple of the principal eigenfunction.
\end{lemma}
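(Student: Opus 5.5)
The plan is to compare $h$ with the principal eigenfunction $u$ of \eqref{eq:s2-eigen} through a comparison argument adapted to the homogeneous structure of the problem. This is the standard route for the Monge--Amp\`ere and Hessian eigenvalue problems in \cite{wang1994class,salani2012cc}. Two preliminary observations come first. Since $h<0$ is a viscosity supersolution in the admissible-test-function sense, any admissible $C^2$ function touching $h$ from below must satisfy the reverse inequality. Moreover, by homogeneity of degree two on both sides of \eqref{eq:ce-general-super}, the class of such supersolutions is invariant under multiplication by positive constants.

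\textbf{Step 1: the bound $\lambda(K)\le\Lambda$.} I argue by contradiction: suppose $\Lambda<\lambda(K)$. Take the smooth eigenfunction $u$, which lies in $C^{1,1}(\overline K)$ with $D^2u\in\Gamma_2$. Consider
\[
t^*=\sup\{t>0:\ t u\ge h\ \text{in }K\}.
\]
To see that $t^*$ is finite and positive, I need $|h|$ to be comparable to the distance to $\partial K$. The lower bound follows from the Hopf lemma for $u$, since $u$ vanishes linearly at the boundary. For the upper bound, I use the fact that $h$ is a viscosity supersolution of a uniformly elliptic-type inequality when tested by admissible functions. Concretely, I compare $h$ near $\partial K$ with a barrier of the form $-\varepsilon\,\mathrm{dist}$ built from the uniform convexity of $K$. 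This is the first technical point.

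At the critical value $t^*$, the function $t^*u$ touches $h$ from below. Either it touches at an interior point $x_0$, or it touches only asymptotically at the boundary; Hopf-type arguments rule out the latter. At an interior contact point, the admissible test function $\phi=t^*u$ gives
\[
\lambda(K)\,(t^*)^2u(x_0)^2=\sigma_2(D^2\phi(x_0))\le\Lambda\,(-h(x_0))^2=\Lambda\,(t^*u(x_0))^2,
\]
and hence $\lambda(K)\le\Lambda$, a contradiction.

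\textbf{Step 2: rigidity when $\Lambda=\lambda(K)$.} Here the contact argument only yields equality at $x_0$. To conclude that $h=t^*u$ I would use the strong maximum principle for the linearized operator. Linearizing $\sigma_2$ along the segment between $D^2(t^*u)$ and a regularization of $h$ is delicate because $h$ is merely continuous. A cleaner approach is to perturb the eigenvalue problem instead. I would apply Step 1 on slightly smaller smooth uniformly convex subdomains $K_\varepsilon\Subset K$, or to $h$ against eigenfunctions of domains $K'\supset K$, and use strict monotonicity of $\lambda$ under inclusion together with the uniqueness part of Wang's theorem \cite{wang1994class}.

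Alternatively, I would use the variational characterization \eqref{eq:lambda-def}. I approximate $h$ by smooth admissible functions via sup-convolution, which preserves supersolutions up to small errors, and use weak continuity of Hessian measures to show that $h$ attains the infimum in \eqref{eq:lambda-def} in the weak sense. Uniqueness of minimizers up to scaling, from \cite{wang1994class}, then gives that $h$ is a multiple of $u$.

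\textbf{Main obstacle.} I expect the rigidity step, together with the boundary comparison of $h$ with $\mathrm{dist}(\cdot,\partial K)$, to be the hardest part. This is where the weak (Hessian measure) theory is needed, rather than classical calculus. My first attempt would be the variational route through weak continuity of Hessian measures, since it gives the inequality and the equality case simultaneously.
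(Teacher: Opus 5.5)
There is a genuine gap, mainly in the equality case. Step~1 also needs repair, but that part is fixable.

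\textbf{Step 1.} As written, the step is set up backwards. With $t^*=\sup\{t>0:\ tu\ge h\}$ one has $t^*u\ge h$, so $t^*u$ touches $h$ from \emph{above}, and a supersolution gives no information when tested from above. Moreover, positivity of this $t^*$ would require $|h|\ge c\,\mathrm{dist}(\cdot,\partial K)$, which is false in general. For instance, $h=-u^2$ satisfies all the hypotheses with $\Lambda=4\lambda(K)$. Indeed, admissible functions can touch $h$ from below only where $D^2h\in\Gamma_2$, and there $\sigma_2(D^2\phi)\le\sigma_2(D^2h)=4\lambda u^4-4|u|\,Du^TT_1(D^2u)Du\le 4\lambda h^2$. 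Yet $|h|/|u|=|u|\to0$ at $\partial K$.

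The correct quantity is $t_*=\sup_K|h|/|u|$. You must then exclude that this supremum is approached only at $\partial K$, and no Hopf lemma is available off the shelf for a merely continuous function that is a supersolution only against admissible test functions. The inequality is easily repaired with the device you mention only in Step~2. Take the eigenfunction $u'$ of a dilate $K'=x_c+s(K-x_c)\supset\overline K$ with $s>1$. Since $u'<0$ on $\overline K$, $\max_{\overline K}|h|/|u'|$ is attained inside $K$. Touching there gives $s^{-4}\lambda(K)=\lambda(K')\le\Lambda$, and letting $s\downarrow1$ yields \eqref{eq:ce-eigenvalue-bound}.

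\textbf{Equality case.} Here the proposal has no working mechanism. Comparisons of the Step~1 type only produce inequalities between eigenvalues; for $\Lambda=\lambda(K)$ the dilation trick gives nothing. None of them shows that $h$ \emph{solves} the eigenvalue equation, which is what Wang's uniqueness requires. The contact-plus-strong-comparison route needs interior contact, which is exactly the boundary Hopf statement you do not have.

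Your variational route fails at its first step, because an admissible-test-function supersolution need not be $2$-convex. For $h=-u^2$, $\Delta h=2|u|\Delta u-2|Du|^2<0$ near $\partial K$. In the paper's application, $D^2h=e^{-w}(D^2w-Dw\otimes Dw)$ need not lie in $\Gamma_2$. So $h\notin\Phi_0^2(K)$, $\sigma_2(D^2h)$ is not a Hessian measure, and $h$ is not a uniform limit of admissible functions. Also, it is inf-convolution, not sup-convolution, that preserves supersolutions.

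The missing idea is the paper's \emph{admissible replacement}. Let $\varphi$ be the $2$-admissible solution of $\sigma_2(D^2\varphi)=\Lambda(-h)^2$ with $\varphi=0$ on $\partial K$. Comparison with the supersolution gives $\varphi\le h$, and it is $\varphi$, not $h$, that goes into \eqref{eq:lambda-def}. With $a=-\varphi\ge b=-h>0$,
\[
\lambda(K)\le\frac{\int_K a\,\sigma_2(D^2\varphi)\,dx}{\int_K a^3\,dx}=\Lambda\,\frac{\int_K ab^2\,dx}{\int_K a^3\,dx}\le\Lambda .
\]
If $\Lambda=\lambda(K)$, then $\int_K a(a^2-b^2)\,dx=0$ forces $a=b$. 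Hence $h=\varphi$ is itself an admissible solution of the eigenvalue problem, and uniqueness applies. This gives the bound and the rigidity at once, with no boundary analysis of $h$.
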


\begin{proof}
Set $g=\Lambda(-h)^2$.  For $\varepsilon>0$, let $\varphi_\varepsilon$ be the continuous $2$-admissible solution, in the Hessian-measure sense, of
\begin{equation}\label{eq:ce-replacement-problem}
\begin{cases}
\sigma_2(D^2\varphi_\varepsilon)=g+\varepsilon &\text{in }K,\\
\varphi_\varepsilon=0 &\text{on }\partial K.
\end{cases}
\end{equation}
Such solutions are obtained by the standard approximation described above.  The same approximation, or equivalently the admissible viscosity comparison principle, gives
\begin{equation}\label{eq:ce-replacement-order}
\varphi_\varepsilon\leq h\qquad\text{in }K.
\end{equation}
Indeed, in the smooth approximating problems, a positive interior maximum of $\varphi_\varepsilon-h$ would produce an admissible test function touching $h$ from below and would contradict the strict inequality $g+\varepsilon>g$.

As $\varepsilon\downarrow0$, stability and weak continuity of Hessian measures give a continuous $2$-admissible function $\varphi$ satisfying
\begin{equation}\label{eq:ce-replacement-limit}
\begin{cases}
\sigma_2(D^2\varphi)=\Lambda(-h)^2 &\text{in }K,\\
\varphi=0 &\text{on }\partial K,\\
\varphi\leq h &\text{in }K.
\end{cases}
\end{equation}
The Rayleigh characterization \eqref{eq:lambda-def}, extended to the corresponding Hessian energy class by approximation, yields
\[
\lambda(K)
\leq
\frac{\displaystyle\int_K(-\varphi)\,\sigma_2(D^2\varphi)\,dx}
{\displaystyle\int_K(-\varphi)^3\,dx}.
\]
Writing
\[
a=-\varphi,
\qquad
b=-h,
\]
we have $a\geq b>0$, and hence
\begin{equation}\label{eq:ce-rayleigh-rigidity}
\lambda(K)
\leq
\Lambda
\frac{\displaystyle\int_Kab^2\,dx}
{\displaystyle\int_Ka^3\,dx}
\leq\Lambda.
\end{equation}
This proves \eqref{eq:ce-eigenvalue-bound}.

If $\Lambda=\lambda(K)$, then both inequalities in \eqref{eq:ce-rayleigh-rigidity} are equalities.  Therefore
\[
\int_Ka(a^2-b^2)\,dx=0.
\]
Since $a\geq b>0$, it follows that $a=b$ in $K$.  Thus $\varphi=h$, and $h$ solves the principal eigenvalue equation in the admissible energy class.  The uniqueness result for the principal Hessian eigenfunction \cite{wang1994class} implies that $h$ is a positive multiple of $u$.
\end{proof}

\subsection{Conclusion of the proof}

\begin{proof}[Proof of Theorem \ref{thm:strict-convex}]
Apply Lemma \ref{lem:ce-replacement} to
\[
h=\widehat u=-e^{-w},
\qquad
\Lambda=\lambda(K).
\]
Lemma \ref{lem:ce-exponentiation} verifies the required viscosity inequality.  Hence
\[
\widehat u=c u
\qquad\text{for some }c>0.
\]
Equivalently,
\begin{equation}\label{eq:ce-envelope-equality-constant}
w=v-\log c.
\end{equation}
Taking infima and using \eqref{eq:ce-infimum}, we obtain $\log c=0$.  Consequently,
\[
w=v,
\]
and therefore
\begin{equation}\label{eq:ce-v-convex}
D^2v\geq0\qquad\text{in }K.
\end{equation}

The transformed admissibility follows from
\[
D^2u=e^{-v}(D^2v-Dv\otimes Dv),
\]
namely
\[
D^2v-Dv\otimes Dv=e^vD^2u\in\Gamma_2.
\]
Theorem \ref{thm:revised} therefore implies that
\begin{equation}\label{eq:ce-constant-rank}
\operatorname{rank}D^2v(x)\equiv r
\qquad\text{in }K
\end{equation}
for some $r\in\{0,\ldots,n\}$.

We claim that $r=n$.  Let $p\in\mathbb R^n$ be arbitrary.  Since $K$ is bounded and \eqref{eq:ce-blow-up} holds, the function
\[
x\longmapsto v(x)-p\cdot x
\]
attains its minimum at an interior point $x_p\in K$.  Hence
\[
Dv(x_p)=p.
\]
Thus
\begin{equation}\label{eq:ce-gradient-surjective}
Dv(K)=\mathbb R^n.
\end{equation}
If $r<n$, the constant-rank theorem for smooth maps shows that the image of $Dv$ is locally contained in an $r$-dimensional embedded submanifold of $\mathbb R^n$.  Since $K$ is second countable, $Dv(K)$ would be contained in a countable union of such submanifolds and would have zero $n$-dimensional Lebesgue measure.  This contradicts \eqref{eq:ce-gradient-surjective}.  Therefore $r=n$.

Combining full rank with \eqref{eq:ce-v-convex}, we conclude that
\[
D^2v>0\qquad\text{throughout }K.
\]
In particular, $v=-\log(-u)$ is convex, and in fact strictly convex, in $K$.
\end{proof}

\section{Proof of Theorem \ref{thm:bm}}

The aim of this section is to prove Theorem \ref{thm:bm}.
First we recall some useful notions and facts.

If $a,b$ are real positive numbers, $\alpha\in[-\infty,+\infty]$ and $\lambda\in(0,1)$, we define
\[
m_{\alpha}(a,b,\lambda)=
\begin{cases}
[(1-\lambda)a^{\alpha}+\lambda b^{\alpha}]^{1/\alpha}, & \alpha\in(-\infty,0)\cup(0,+\infty),\\
\min(a,b), & \alpha=-\infty,\\
a^{1-\lambda}b^{\lambda}, & \alpha=0,\\
\max(a,b), & \alpha=+\infty.
\end{cases}
\]

Moreover, we set $m_{\alpha}(a,b,\lambda)=0$, for $\alpha\in[-\infty,+\infty]$, if $a$ and $b$ are nonnegative and $ab=0$.

Jensen's inequality implies that
\[
m_{\alpha}(a,b,\lambda)\leq m_{\beta}(a,b,\lambda)\quad\text{if }\alpha\leq\beta.
\]
In particular, the arithmetic--geometric mean inequality holds
\[
a^{1-\lambda}b^{\lambda}\leq (1-\lambda)a+\lambda b,\qquad a,b\geq0,\ \lambda\in[0,1].
\]

\begin{proposition}\label{prop:s2-convex}
$\sigma_{2}(A^{-1})^{1/2}$ is convex in $A\in S^{n}_{++}$.
\end{proposition}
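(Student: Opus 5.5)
We plan to prove that $\sigma_2(A^{-1})^{1/2}$ is convex in $A$ by working with the eigenvalues of $A$ and then lifting to matrices with Proposition \ref{flambdathm}. Let $\lambda_1,\dots,\lambda_n>0$ be the eigenvalues of $A$. Then
\[
\sigma_2(A^{-1})^{1/2}=f(\lambda(A)),\qquad f(\lambda)=\Bigl(\sum_{i<j}\frac{1}{\lambda_i\lambda_j}\Bigr)^{1/2}\ \text{on }\mathbb R^n_{++}.
\]
We extend $f$ by $+\infty$ outside $\mathbb R^n_{++}$. This $f$ is symmetric. The determinant is hyperbolic with respect to $I$, its characteristic roots are exactly the eigenvalues, and its hyperbolicity cone is $S^n_{++}$. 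So Proposition \ref{flambdathm} reduces the claim to showing that this extended $f$ is convex on $\mathbb R^n$.

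To prove that, we write $x_i=1/\lambda_i$ and set $g(x)=\sigma_2(x)^{1/2}$, so that $f(\lambda)=g(1/\lambda_1,\dots,1/\lambda_n)$.

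Two facts about $g$ drive the argument. First, $g$ is concave on $\Gamma_2\supset\mathbb R^n_{++}$, since $\sigma_2^{1/2}$ is concave there; this is G\aa rding's concavity for the hyperbolic polynomial $\sigma_2$ on $\mathbb R^n$. Second, $g$ is nondecreasing in each variable on $\mathbb R^n_{++}$, because $\partial\sigma_2/\partial x_i=\sigma_1$ of the other entries, which is positive. Since $g$ is homogeneous of degree one, concave and positive, we have $g(x)=\inf_{\ell}\ell(x)$ over the supporting linear functionals $\ell(x)=\sum c_ix_i$ at points of $\mathbb R^n_{++}$. By monotonicity these functionals have $c_i=\partial_i g\ge0$. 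Hence
\[
f(\lambda)=\inf_{c}\sum_i \frac{c_i}{\lambda_i},
\]
an infimum over certain $c\ge0$. Each function $\lambda\mapsto\sum c_i/\lambda_i$ is convex on $\mathbb R^n_{++}$, but an infimum of convex functions is not convex in general, so this representation by itself does not finish the proof.

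The cleaner route is to use the standard composition rule. If $g$ is concave and nondecreasing in each argument, and each $\lambda_i\mapsto 1/\lambda_i$ is convex, then $g(1/\lambda)$ is not guaranteed convex; the rule only gives convexity when $g$ is convex and nondecreasing. The correct tool is perspective-type homogeneity. Because $g$ is $1$-homogeneous and concave, the function $f$ is $(-1)$-homogeneous, and we check convexity of $f$ directly by computing its Hessian. With $s=\sigma_2(1/\lambda)$ we have $f=s^{1/2}$, and
\[
D^2 f=\tfrac12 s^{-1/2}D^2s-\tfrac14 s^{-3/2}\,Ds\otimes Ds ,
\]
so convexity is equivalent to $2s\,D^2s\ge Ds\otimes Ds$.

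In the variables $x_i=1/\lambda_i$ we have
\[
\partial_{\lambda_i}s=-x_i^2\,\sigma_1(x|i),\qquad
\partial^2_{\lambda_i\lambda_i}s=2x_i^3\,\sigma_1(x|i),\qquad
\partial^2_{\lambda_i\lambda_j}s=x_i^2x_j^2\quad(i\ne j).
\]
Substituting $\xi_i=x_i\eta_i$, the required inequality becomes: for all $\eta$,
\[
2\sigma_2(x)\Bigl[2\sum_i x_i\,\sigma_1(x|i)\,\eta_i^2+\sum_{i\ne j}x_ix_j\,\eta_i\eta_j\Bigr]\ \ge\ \Bigl(\sum_i x_i\,\sigma_1(x|i)\,\eta_i\Bigr)^2 .
\]
The bracket equals $Q(\eta)=\sum_i x_i\sigma_1(x|i)\eta_i^2+\bigl(\sum_i x_i\eta_i\bigr)^2-\sum_i x_i^2\eta_i^2$, and we will also use $\sum_i x_i\sigma_1(x|i)=2\sigma_2(x)$. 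Writing $w_i=x_i\sigma_1(x|i)\ge0$, Cauchy--Schwarz gives
\[
\Bigl(\sum_i w_i\eta_i\Bigr)^2\le\Bigl(\sum_i w_i\Bigr)\Bigl(\sum_i w_i\eta_i^2\Bigr)=2\sigma_2(x)\sum_i w_i\eta_i^2 .
\]
So the key step is to show
\[
2\sum_i w_i\eta_i^2+2\sum_{i\ne j}x_ix_j\eta_i\eta_j\ \ge\ \sum_i w_i\eta_i^2 .
\]
Expanding, this is $\sum_{i\ne j}x_ix_j\eta_i^2+2\sum_{i\ne j}x_ix_j\eta_i\eta_j\ge0$. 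This last step is the main obstacle: it is not a pointwise identity, so it has to be handled with care. If this direct Cauchy--Schwarz estimate turns out to be too lossy, the fallback is the abstract route: show that $1/f(\lambda)$ is a concave harmonic-type mean, in the spirit of Proposition \ref{concavity}, and deduce convexity of $f$ from the fact that the reciprocal of a positive concave function is convex.
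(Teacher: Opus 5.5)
Your reduction to the symmetric function $f(\lambda)=\sigma_2(1/\lambda_1,\dots,1/\lambda_n)^{1/2}$ via Proposition~\ref{flambdathm} is sound. Your derivatives of $s$ are right. The displayed inequality in $\eta$ is the correct convexity criterion, provided the substitution is $\xi_i=\lambda_i\eta_i=\eta_i/x_i$ rather than $\xi_i=x_i\eta_i$.

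\textbf{The gap is the last step.} You leave it open as ``the main obstacle'', and the inequality you reduce to is actually false. The bracket contains the cross term $\sum_{i\neq j}x_ix_j\eta_i\eta_j$ only once. Your doubled version $\sum_{i\neq j}x_ix_j\eta_i^2+2\sum_{i\neq j}x_ix_j\eta_i\eta_j\ge0$ is therefore not what is needed. It already fails for $n=2$, $\eta=(1,-1)$, where it reads $-2x_1x_2\ge0$. Relatedly, the bracket is not $Q(\eta)$ but $\sum_i w_i\eta_i^2+Q(\eta)$.

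With the bookkeeping fixed, the step is immediate. After your Cauchy--Schwarz estimate, what remains to check is exactly $Q(\eta)\ge0$, and
\[
Q(\eta)=\sum_{i\neq j}x_ix_j\eta_i^2+\sum_{i\neq j}x_ix_j\eta_i\eta_j=\sum_{i<j}x_ix_j(\eta_i+\eta_j)^2\ge0 .
\]
Hence $\bigl(\sum_i w_i\eta_i\bigr)^2\le 2\sigma_2(x)\sum_i w_i\eta_i^2\le 2\sigma_2(x)\bigl(\sum_i w_i\eta_i^2+Q(\eta)\bigr)$ closes the argument, and you get a complete elementary proof.

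The paper itself gives no argument and simply cites Theorem~15.16 of \cite{lieberman1996second}. The natural reading is concavity of the Hessian quotient $(\sigma_n/\sigma_{n-2})^{1/2}=\sigma_2(A^{-1})^{-1/2}$ on $S^n_{++}$, followed by taking reciprocals. That is your fallback, except that you did not say what supplies the concavity. It is available inside the paper: Proposition~\ref{concavity} with $e=I$, applied to $p=\det$ and to $p=\sigma_{n-1}$ (for which $D_I\sigma_{n-1}=2\sigma_{n-2}$), gives concavity of $\sigma_n/\sigma_{n-1}$ and $\sigma_{n-1}/\sigma_{n-2}$ on $S^n_{++}$. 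The geometric mean of two positive concave functions is concave, which finishes that route.

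The quotient route gives the stronger statement that $\sigma_2(A^{-1})^{-1/2}$ is concave. Your corrected Hessian computation proves exactly the convexity needed, using no quotient-concavity input beyond Proposition~\ref{flambdathm}.
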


\begin{proof}
This is a special case of Theorem 15.16 in \cite{lieberman1996second}.
\end{proof}

\begin{proof}[Proof of Theorem \ref{thm:bm}]
The cases $t=0$ and $t=1$ are immediate, so throughout the proof of the inequality we may assume $0<t<1$.
For $i=0,1$, let $u_i$ be a principal eigenfunction in $K_i$:
\[
\begin{cases}
\sigma_2(D^2u_i)=\lambda(K_i)(-u_i)^2,\quad u_i<0 & \text{in }K_i,\\
u_i=0 & \text{on }\partial K_i.
\end{cases}
\]
Set
\[
v_i=-\log(-u_i).
\]
Then
\begin{equation}\label{eq:vi-equation}
\begin{cases}
\sigma_2(D^2v_i)-\operatorname{Tr}(P(Dv_i)D^2v_i)=\lambda(K_i) & \text{in }K_i,\\
v_i(x)\to+\infty & x\to\partial K_i,
\end{cases}
\end{equation}
where
\[
P(p)=|p|^2I-p\otimes p.
\]
By Theorem \ref{thm:strict-convex}, $D^2v_i>0$ in $K_i$. As in \eqref{eq:ce-gradient-surjective}, the boundary blow-up implies
\[
Dv_i(K_i)=\mathbb R^n.
\]
For $\rho\in\mathbb R^n$, let
\[
v_i^*(\rho)=\sup_{x\in K_i}\{x\cdot\rho-v_i(x)\}
\]
be the Legendre transform of $v_i$. The strict convexity and boundary blow-up of $v_i$ imply that $Dv_i:K_i\to\mathbb R^n$ is a diffeomorphism and
\[
Dv_i^*=(Dv_i)^{-1}.
\]
In particular,
\begin{equation}\label{eq:legendre-hessian}
D^2v_i(x)=\bigl[D^2v_i^*(Dv_i(x))\bigr]^{-1}.
\end{equation}

Let
\[
K_t=(1-t)K_0+tK_1
\]
and define the infimal convolution
\begin{equation}\label{eq:w-def}
w(z)=\inf\bigl\{(1-t)v_0(x)+tv_1(y):
(1-t)x+ty=z\bigr\}.
\end{equation}
The boundary blow-up makes the infimum attainable at interior points. Equivalently,
\begin{equation}\label{eq:w-star}
w^*=(1-t)v_0^*+tv_1^*\qquad\text{in }\mathbb R^n.
\end{equation}
Since $D^2v_i^*>0$, the right-hand side is smooth and strictly convex. To identify the range of its gradient, fix $z\in K_t$. The minimization in \eqref{eq:w-def} has a unique interior minimizing pair $(x,y)$, and the Lagrange multiplier condition gives a vector $\rho$ with
\[
Dv_0(x)=Dv_1(y)=\rho.
\]
Consequently,
\[
z=(1-t)Dv_0^*(\rho)+tDv_1^*(\rho)=Dw^*(\rho).
\]
Conversely, every $\rho\in\mathbb R^n$ produces a point of $K_t$ by this formula. Hence $Dw^*$ maps $\mathbb R^n$ onto $K_t$; its positive definite Hessian makes this map a diffeomorphism. Therefore $w$ is smooth and strictly convex in $K_t$. The same compactness argument used in Lemma \ref{lem:ce-proper}, applied to the two minimizing points in \eqref{eq:w-def}, gives
\[
w(z)\to+\infty\qquad\text{as }z\to\partial K_t.
\]

Fix $z\in K_t$. There are unique $x\in K_0$, $y\in K_1$, and $\rho\in\mathbb R^n$ such that
\[
z=(1-t)x+ty,
\qquad
Dv_0(x)=Dv_1(y)=Dw(z)=\rho.
\]
Set
\[
H_i=D^2v_i(x_i),\qquad H=D^2w(z),
\qquad x_0=x,\quad x_1=y.
\]
Differentiating \eqref{eq:w-star} gives the harmonic-mean identity
\begin{equation}\label{eq:w-hessian}
H^{-1}=(1-t)H_0^{-1}+tH_1^{-1}.
\end{equation}
We claim that, with
\[
\Lambda=\max\{\lambda(K_0),\lambda(K_1)\},
\]
one has
\begin{equation}\label{eq:claim-a}
\sigma_2(H)-\operatorname{Tr}(P(\rho)H)\leq\Lambda
\qquad\text{in }K_t.
\end{equation}

If $\rho=0$, then $P(\rho)=0$. Proposition \ref{prop:s2-convex} and \eqref{eq:w-hessian} give
\[
\sigma_2(H)^{1/2}
\leq
(1-t)\sigma_2(H_0)^{1/2}
+t\sigma_2(H_1)^{1/2}
\leq \Lambda^{1/2},
\]
which proves \eqref{eq:claim-a} at such a point.

Suppose now that $\rho\neq0$ and write
\[
\alpha=\frac{\rho}{|\rho|},
\qquad
Q_\alpha=I-\alpha\otimes\alpha,
\qquad
P(\rho)=|\rho|^2Q_\alpha.
\]
For $A\in S^n_{++}$ set
\[
\Phi_\alpha(A)
=
\frac{\sigma_2(A^{-1})}{\operatorname{Tr}(Q_\alpha A^{-1})},
\qquad
f_\alpha(A)
=
\frac1{\operatorname{Tr}(Q_\alpha A^{-1})}.
\]
By Theorem \ref{thm:main}, $\Phi_\alpha$ is convex, and by Proposition \ref{prop:matrix-convex}, $f_\alpha$ is concave. Applying these two facts to
\[
A_i=H_i^{-1},
\qquad
A=H^{-1}=(1-t)A_0+tA_1,
\]
and using \eqref{eq:vi-equation}, we obtain
\begin{align}
\frac{\sigma_2(H)}{\operatorname{Tr}(Q_\alpha H)}
&\leq
(1-t)\frac{\sigma_2(H_0)}{\operatorname{Tr}(Q_\alpha H_0)}
+t\frac{\sigma_2(H_1)}{\operatorname{Tr}(Q_\alpha H_1)}\notag\\
&=
|\rho|^2
+(1-t)\frac{\lambda(K_0)}{\operatorname{Tr}(Q_\alpha H_0)}
+t\frac{\lambda(K_1)}{\operatorname{Tr}(Q_\alpha H_1)}\notag\\
&\leq
|\rho|^2
+\Lambda\left(
\frac{1-t}{\operatorname{Tr}(Q_\alpha H_0)}
+
\frac{t}{\operatorname{Tr}(Q_\alpha H_1)}
\right)\notag\\
&\leq
|\rho|^2+
\frac{\Lambda}{\operatorname{Tr}(Q_\alpha H)}.
\label{eq:prop-use}
\end{align}
Multiplication by $\operatorname{Tr}(Q_\alpha H)>0$ proves \eqref{eq:claim-a}.

Define
\[
\overline u=-e^{-w}
\qquad\text{in }K_t.
\]
Then $\overline u<0$ in $K_t$, $\overline u=0$ on $\partial K_t$, and \eqref{eq:claim-a} implies
\begin{equation}\label{eq:bm-viscosity-super}
\sigma_2(D^2\overline u)
\leq
\Lambda(-\overline u)^2
\end{equation}
in the admissible-test-function viscosity sense. Indeed, if $\psi$ touches $\overline u$ from below at $z$ and $D^2\psi(z)\in\Gamma_2$, then $\phi=-\log(-\psi)$ touches $w$ from below at $z$. Thus
\[
D\phi(z)=Dw(z),
\qquad
D^2\phi(z)\leq D^2w(z),
\]
and
\[
D^2\phi-D\phi\otimes D\phi
\leq
D^2w-Dw\otimes Dw
\quad\text{at }z.
\]
The matrix on the left belongs to $\Gamma_2$, and adding a positive semidefinite matrix keeps one in $\Gamma_2$. Monotonicity of $\sigma_2$ along such directions, followed by \eqref{eq:claim-a}, gives \eqref{eq:bm-viscosity-super}.

The Minkowski sum $K_t$ is again smooth and uniformly convex. Applying Lemma \ref{lem:ce-replacement} on $K_t$ to $h=\overline u$ and the constant $\Lambda$ yields
\begin{equation}\label{eq:max-lambda}
\lambda(K_t)
\leq
\max\{\lambda(K_0),\lambda(K_1)\}.
\end{equation}
This is the normalized maximum form of the desired inequality.

We now use homogeneity. Put
\[
\lambda_i=\lambda(K_i),
\qquad
L=(1-t)\lambda_0^{-1/4}+t\lambda_1^{-1/4},
\]
and define
\[
K_i'=\lambda_i^{1/4}K_i,
\qquad
\lambda(K_i')=1,
\qquad i=0,1.
\]
Set
\[
t'=\frac{t\lambda_1^{-1/4}}{L},
\qquad
1-t'=\frac{(1-t)\lambda_0^{-1/4}}{L}.
\]
Then
\[
(1-t')K_0'+t'K_1'
=
\frac1L\bigl((1-t)K_0+tK_1\bigr)
=
\frac1L K_t.
\]
Applying \eqref{eq:max-lambda} to $K_0'$ and $K_1'$ gives
\[
\lambda(L^{-1}K_t)\leq1.
\]
Since $\lambda(rK)=r^{-4}\lambda(K)$,
\[
L^4\lambda(K_t)\leq1,
\]
which is exactly \eqref{eq:bm-s2}.

It remains to discuss equality. If $K_0$ and $K_1$ are homothetic, equality follows from translation invariance and homogeneity. Conversely, assume $0<t<1$ and equality holds in \eqref{eq:bm-s2}. After the preceding normalization and a harmless relabelling, we may assume
\begin{equation}\label{eq:bm-equality-normalized}
\lambda(K_0)=\lambda(K_1)=\lambda(K_t)=1.
\end{equation}
For the corresponding infimal convolution, \eqref{eq:bm-viscosity-super} holds with $\Lambda=1$. The rigidity statement in Lemma \ref{lem:ce-replacement}, together with \eqref{eq:bm-equality-normalized}, shows that $\overline u$ is a positive multiple of the principal eigenfunction in $K_t$. Hence $\overline u$ is admissible and satisfies
\[
\sigma_2(D^2\overline u)=(-\overline u)^2
\qquad\text{in }K_t.
\]
Equivalently,
\begin{equation}\label{eq:bm-w-equality}
\sigma_2(D^2w-Dw\otimes Dw)=1
\qquad\text{in }K_t.
\end{equation}

Fix $\rho\neq0$ and set
\[
A_i=D^2v_i^*(\rho),
\qquad
A=(1-t)A_0+tA_1=D^2w^*(\rho),
\qquad
Q=I-\frac{\rho}{|\rho|}\otimes\frac{\rho}{|\rho|}.
\]
With
\[
\Phi_Q(B)=\frac{\sigma_2(B^{-1})}{\operatorname{Tr}(QB^{-1})},
\qquad
f_Q(B)=\frac1{\operatorname{Tr}(QB^{-1})},
\]
the normalized equations for $v_0$, $v_1$, and $w$ read
\[
\Phi_Q(A_i)=|\rho|^2+f_Q(A_i),
\qquad
\Phi_Q(A)=|\rho|^2+f_Q(A).
\]
Convexity of $\Phi_Q$ and concavity of $f_Q$ therefore give the closed chain
\begin{align*}
\Phi_Q(A)
&\leq(1-t)\Phi_Q(A_0)+t\Phi_Q(A_1)\\
&=|\rho|^2+(1-t)f_Q(A_0)+tf_Q(A_1)\\
&\leq|\rho|^2+f_Q(A)
=\Phi_Q(A).
\end{align*}
Every inequality is consequently an equality. Since $n\geq4$, Theorem \ref{thm:main} says that $\Phi_Q$ is strictly convex; because $0<t<1$, equality in the first line forces
\[
A_0=A_1.
\]
Thus
\[
D^2v_0^*(\rho)=D^2v_1^*(\rho)
\qquad\text{for every }\rho\neq0.
\]
Continuity gives the same identity at $\rho=0$. Hence $v_0^*-v_1^*$ is affine, and there is a fixed vector $a\in\mathbb R^n$ such that
\[
Dv_0^*(\rho)=Dv_1^*(\rho)+a
\qquad\text{for every }\rho\in\mathbb R^n.
\]
Taking the ranges of the two gradient maps yields
\[
K_0=K_1+a
\]
for the normalized domains. Undoing the normalization shows that the original domains are homothetic. This completes the proof.
\end{proof}

\subsection{Proofs of Theorem \ref{thm:mainssss}, Theorem \ref{thm:bmtau} and Theorem \ref{urysohn}}
The proof of Theorem \ref{urysohn} follows the standard argument in the proof of Corollary 2.2 of \cite{BianchiniSalani}. The remaining global steps in Theorems \ref{thm:mainssss} and \ref{thm:bmtau} follow the convex-envelope, boundary, and infimal-convolution arguments in \cite{ma2008convexity,liumaxu2010cc,salani2012cc}. We do not reproduce those cited arguments here. The internal point that must be verified in the present dimension is the following constant-rank theorem.

\begin{theorem}\label{lem:constant-rank}
Let $\Omega\subset\mathbb R^n$, $n\geq4$, be connected, and let $u\in C^4(\Omega)$ be a negative admissible solution of
\begin{equation}\label{eq:torsion-local}
\sigma_2(D^2u)=1\qquad\text{in }\Omega.
\end{equation}
Set
\[
v=-(-u)^{1/2}.
\]
If $D^2v\geq0$ in $\Omega$, then $D^2v$ has constant rank in $\Omega$.
\end{theorem}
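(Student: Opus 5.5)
The plan is to rewrite the torsion equation as a fully nonlinear equation for $v$ and then apply the Bian--Guan level-set constant rank theorem (Theorem \ref{thm:bg-level}), exactly as in the proof of Theorem \ref{thm:revised}. The new feature is that the transformed equation now depends on the zeroth-order variable $z=v$, so local convexity of the sublevel set has to be checked jointly in $(A,z)$.

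\textbf{Step 1: the transformed equation.} Since $v<0$, write $s=-v>0$, so that $u=-v^2=-s^2$. Then
\[
Du=2s\,Dv,\qquad D^2u=2s\,D^2v-2\,Dv\otimes Dv=2s\Bigl(D^2v-\tfrac1s\,Dv\otimes Dv\Bigr).
\]
By homogeneity of $\sigma_2$, \eqref{eq:torsion-local} becomes
\[
F(D^2v,Dv,v)=0,\qquad F(r,p,z):=\sigma_2\Bigl(r-\tfrac{1}{-z}\,p\otimes p\Bigr)-\frac{1}{4z^2},\qquad z<0 .
\]
Using the identity \eqref{eq:s2-identity} with $q=p/\sqrt{-z}$, this can also be written as
\[
F(r,p,z)=\sigma_2(r)-\frac1{-z}\operatorname{Tr}(P(p)r)-\frac1{4z^2}.
\]

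\textbf{Step 2: ellipticity and nondegeneracy.} These are verified as in Subsections \ref{Ellipticity} and \ref{nondegeneracy}.
\begin{itemize}
\item Ellipticity. Set $W=D^2v-\tfrac1s Dv\otimes Dv$. Then $F^{ij}=\sigma_2^{ij}(W)=T_1(W)_{ij}$. Since $W=D^2u/(2s)$, admissibility of $u$ gives $W\in\Gamma_2$, and Proposition \ref{prop:salaniphd} yields $T_1(W)>0$.
\item Nondegeneracy. For every $p$ and every $z<0$ we have $F(0,p,z)=-1/(4z^2)\neq0$.
\end{itemize}

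\textbf{Step 3: joint convexity of the sublevel set (main obstacle).} For fixed $p$ we must show that
\[
\bigl\{(A,z)\in S^n_{++}\times(-\infty,0):F(A^{-1},p,z)\le0\bigr\}
\]
is convex. The $z$-dependence mixes the homogeneities of $\sigma_2(A^{-1})$ and $\operatorname{Tr}(QA^{-1})$, so a naive splitting into convex and concave pieces fails.

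The remedy is to exploit homogeneity. Multiply the inequality $F(A^{-1},p,z)\le0$ by $s^2$ and set $B=A/s$. Since $\sigma_2(A^{-1})$ has degree $-2$ and $P(p)$ enters linearly through $\operatorname{Tr}(P(p)A^{-1})$ of degree $-1$, the condition becomes
\[
\sigma_2(B^{-1})-\operatorname{Tr}(P(p)B^{-1})-\tfrac14\le0,
\]
that is, $\sigma_2(B^{-1}-p\otimes p)\le\tfrac14$. In the notation of \eqref{eq:ce-level-set} this says $B\in\mathcal K_{p,1/4}$. That set is convex for every $p$: for $p\neq0$ this follows from Theorem \ref{thm:main} together with Proposition \ref{prop:matrix-convex} via \eqref{eq:H-convex}, and for $p=0$ from the intersection argument \eqref{eq:intersection}.

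Hence the $z$-sublevel set is $\{(A,s):A/s\in\mathcal K_{p,1/4}\}$ with $s=-z$, and any set of this form with $C$ convex is convex. Indeed, suppose $A_i/s_i\in C$ for $i=1,2$ and $\theta\in(0,1)$. Put $S=\theta s_1+(1-\theta)s_2$. Then
\[
\frac{\theta A_1+(1-\theta)A_2}{S}=\frac{\theta s_1}{S}\,\frac{A_1}{s_1}+\frac{(1-\theta)s_2}{S}\,\frac{A_2}{s_2},
\]
which is a convex combination of points of $C$ and therefore lies in $C$. Since $s=-z$ is affine in $z$ and $F$ does not depend on $x$, this gives the local convexity required in assumption (3) of Theorem \ref{thm:bg-level}.

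\textbf{Step 4: conclusion.} All hypotheses of Theorem \ref{thm:bg-level} now hold: $D^2v\ge0$ by assumption, $v\in C^4$ since $u\in C^4$ and $u<0$, together with Steps 2 and 3. Theorem \ref{thm:bg-level} then gives that $\operatorname{rank}D^2v$ is constant in $\Omega$.
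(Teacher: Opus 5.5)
Your proposal is correct and follows the paper's proof essentially step by step. You check the same three hypotheses of Theorem~\ref{thm:bg-level}, and you get the sublevel convexity the same way: rescale $A=sB$, reduce to the convex set $\mathcal K_{p,1/4}$ (via Theorem~\ref{thm:main}, Proposition~\ref{prop:matrix-convex}, and the intersection argument for $p=0$), then apply the perspective argument.

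The one difference is cosmetic. Your $F$ is the paper's $z^2\sigma_2(r)+z\operatorname{Tr}(P(p)r)-\frac14$ divided by $z^2>0$, which leaves the sublevel sets, ellipticity and nondegeneracy unchanged. The paper's polynomial normalization also avoids any question about the regularity of $F$ near $z=0$.
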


\begin{proof}
Write
\[
r=D^2v,
\qquad
p=Dv,
\qquad
z=v<0.
\]
Since $u=-v^2$,
\[
D^2u=-2(zr+p\otimes p).
\]
The homogeneity of $\sigma_2$ transforms \eqref{eq:torsion-local} into
\[
4\sigma_2(zr+p\otimes p)=1.
\]
The rank-one identity
\begin{equation}\label{eq:s2-identity-torsion}
\sigma_2(zr+p\otimes p)
=z^2\sigma_2(r)+z\operatorname{Tr}(P(p)r),
\qquad
P(p)=|p|^2I-p\otimes p,
\end{equation}
therefore gives
\begin{equation}\label{eq:F-torsion}
F(r,p,z)=0,
\end{equation}
where
\begin{equation}\label{eq:F-def-torsion}
F(r,p,z)
=z^2\sigma_2(r)+z\operatorname{Tr}(P(p)r)-\frac14.
\end{equation}
We verify the three hypotheses of Theorem \ref{thm:bg-level}.

First, the equation is elliptic along the solution. Let
\[
M=D^2u=-2(zr+p\otimes p)\in\Gamma_2.
\]
Since
\[
T_1(zr+p\otimes p)=zT_1(r)+P(p),
\]
we have
\begin{align*}
F^{ij}(r,p,z)
&=z^2T_1(r)_{ij}+zP(p)_{ij}\\
&=zT_1(zr+p\otimes p)_{ij}\\
&=-\frac z2T_1(M)_{ij}.
\end{align*}
Because $z<0$ and $M\in\Gamma_2$, Proposition \ref{prop:salaniphd} gives $T_1(M)>0$. Hence $(F^{ij})>0$.

Second,
\[
F(0,p,z)=-\frac14\neq0,
\]
so the nondegeneracy condition holds.

It remains to prove the inverse-sublevel convexity. Fix $p\in\mathbb R^n$ and consider
\begin{equation}\label{eq:torsion-level-set}
\mathcal C_p
=
\left\{(A,z)\in S^n_{++}\times(-\infty,0):
F(A^{-1},p,z)\leq0\right\}.
\end{equation}
Put $s=-z>0$. The defining inequality becomes
\begin{equation}\label{eq:torsion-level-s}
s^2\sigma_2(A^{-1})-s\operatorname{Tr}(P(p)A^{-1})\leq\frac14.
\end{equation}

Assume first that $p\neq0$, set
\[
\alpha=\frac p{|p|},
\qquad
Q_\alpha=I-\alpha\otimes\alpha,
\]
and write $A=sB$. Then $A^{-1}=s^{-1}B^{-1}$, and \eqref{eq:torsion-level-s} is equivalent to
\[
\sigma_2(B^{-1})-|p|^2\operatorname{Tr}(Q_\alpha B^{-1})\leq\frac14,
\]
or
\begin{equation}\label{eq:torsion-B-sublevel}
\frac{\sigma_2(B^{-1})-\frac14}
{\operatorname{Tr}(Q_\alpha B^{-1})}
\leq |p|^2.
\end{equation}
The function on the left is convex by Theorem \ref{thm:main} and Proposition \ref{prop:matrix-convex}. Thus the set
\[
\mathcal K_p
=
\left\{B\in S^n_{++}:\eqref{eq:torsion-B-sublevel}\ \text{holds}\right\}
\]
is convex. Moreover,
\[
\mathcal C_p
=
\{(A,z):s=-z>0,\ A=sB,\ B\in\mathcal K_p\}.
\]
This perspective set is convex: if $A_i=s_iB_i$ with $B_i\in\mathcal K_p$ and $s_i>0$, then for $0\leq\theta\leq1$,
\[
s_\theta=\theta s_1+(1-\theta)s_2>0
\]
and
\[
\frac{\theta A_1+(1-\theta)A_2}{s_\theta}
=
\frac{\theta s_1}{s_\theta}B_1
+
\frac{(1-\theta)s_2}{s_\theta}B_2
\in\mathcal K_p.
\]
Hence $\mathcal C_p$ is convex when $p\neq0$.

When $p=0$, the corresponding set of $B$-matrices is
\[
\mathcal K_0
=
\left\{B\in S^n_{++}:\sigma_2(B^{-1})\leq\frac14\right\}.
\]
Fix a unit vector $\alpha$ and put $p_\varepsilon=\varepsilon\alpha$. Then
\[
\mathcal K_0=\bigcap_{\varepsilon>0}\mathcal K_{p_\varepsilon}.
\]
Indeed, one inclusion follows by subtracting the nonnegative term
$\varepsilon^2\operatorname{Tr}(Q_\alpha B^{-1})$, and the converse follows by letting $\varepsilon\downarrow0$. Thus $\mathcal K_0$ is convex, and the same perspective argument proves that $\mathcal C_0$ is convex.

The operator $F$ is independent of $x$, so the preceding convexity is precisely the level-set structural condition in Theorem \ref{thm:bg-level}. Together with ellipticity, nondegeneracy, $D^2v\geq0$, and connectedness of $\Omega$, the Bian--Guan theorem yields that
\[
\operatorname{rank}D^2v
\]
is constant in $\Omega$.
\end{proof}

In the global torsion problem, the additional boundary condition $u=0$ on $\partial K$ is used in the cited convex-envelope and boundary arguments, not in the local constant-rank statement above. Combining those external steps with Theorem \ref{lem:constant-rank} gives Theorem \ref{thm:mainssss}; the standard infimal-convolution and homogeneity argument then gives Theorem \ref{thm:bmtau}, including its equality statement for $0<t<1$.


\section{Failure for the
\texorpdfstring{\(3\)}{3}-Hessian equation}
\label{part:sigma3-counterexample}

The square-root convexity theorem, Theorem~\ref{thm:mainssss}, depends on
a matrix structure special to \(\sigma_2\).  We now prove that no
unconditional analogue survives for the next intermediate Hessian
operator.  Throughout this part, \(X\) denotes the slow coordinate,
\(y=(s,z_1,z_2)\) the three fast coordinates, and
\(z=(z_1,z_2)\).

\subsection{The model profile}\label{sec:model-profile}

Define
\begin{equation}\label{eq:counter-CL}
  C(X)=\frac{1}{100}+X+\frac{X^2}{50},
  \qquad
  L(X)=-\frac{1}{3}-\frac{7}{10}X^2,
\end{equation}
and
\begin{equation}\label{eq:counter-profile}
  V(X,s,z)
  =
  \frac{s^4}{12}
  +\frac{|z|^2}{2s}
  +L(X)s+C(X),
  \qquad
  s>0,\quad z\in\mathbb R^2.
\end{equation}
It is convenient to write
\[
  y=(s,z_1,z_2)\in\mathbb R^3,
  \qquad
  \Phi(X,s)=C(X)+L(X)s+\frac{s^4}{12},
\]
so that
\[
  V(X,s,z)=\Phi(X,s)+\frac{|z|^2}{2s}.
\]

\begin{lemma}\label{lem:counter-transverse}
For every \(s>0\),
\begin{equation}\label{eq:counter-fast-hessian}
  A:=D_y^2V
  =
  \begin{pmatrix}
    s^2+\dfrac{|z|^2}{s^3}
      &-\dfrac{z_1}{s^2}&-\dfrac{z_2}{s^2}\\[6pt]
    -\dfrac{z_1}{s^2}&\dfrac1s&0\\[6pt]
    -\dfrac{z_2}{s^2}&0&\dfrac1s
  \end{pmatrix}
  >0,
  \qquad
  \det A=1.
\end{equation}
\end{lemma}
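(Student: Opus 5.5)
The plan is a direct computation in the fast variables $y=(s,z_1,z_2)$ with $X$ fixed. Since $\Phi(X,s)=C(X)+L(X)s+s^4/12$ depends on $s$ only through a term linear in $s$ plus $s^4/12$, its only contribution to $D_y^2V$ is $\partial_s^2\Phi=s^2$; the functions $C$ and $L$ play no role at all. The remaining term $|z|^2/(2s)$ gives
\[
\partial_s^2\frac{|z|^2}{2s}=\frac{|z|^2}{s^3},\qquad
\partial_s\partial_{z_i}\frac{|z|^2}{2s}=-\frac{z_i}{s^2},\qquad
\partial_{z_i}\partial_{z_j}\frac{|z|^2}{2s}=\frac{\delta_{ij}}{s}.
\]
Adding these produces exactly the matrix displayed in \eqref{eq:counter-fast-hessian}.

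For the determinant I would use the Schur complement with respect to the lower-right block $D=s^{-1}I_2$, which is positive definite for $s>0$. Writing $b=-z/s^2$, one gets
\[
\det A=\det D\cdot\Bigl(s^2+\frac{|z|^2}{s^3}-b^TD^{-1}b\Bigr)
=\frac1{s^2}\Bigl(s^2+\frac{|z|^2}{s^3}-s\cdot\frac{|z|^2}{s^4}\Bigr)=\frac{1}{s^2}\cdot s^2=1 .
\]

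Positive definiteness then follows from the same factorization. The block $D$ is positive definite, and its Schur complement equals the scalar $s^2>0$, so $A>0$. As an alternative check, one can use Sylvester's criterion on the trailing principal minors: these are $1/s$, $1/s^2$, and $\det A=1$, all positive.

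Equivalently, and more conceptually, $|z|^2/(2s)$ is the perspective of the convex function $|z|^2/2$. Its Hessian is therefore positive semidefinite of rank two, with kernel spanned by $(s,z)$. Adding $s^2\,e_s\otimes e_s$, whose direction is not orthogonal to this kernel, makes the matrix definite.

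There is no real obstacle here. The only point requiring care is the sign bookkeeping in the mixed derivatives, since the cancellation in the Schur complement depends on it; the rest is routine.
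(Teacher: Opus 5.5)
Your proposal is correct and follows the paper's proof. The paper also takes the Schur complement of the lower-right block $s^{-1}I_2$, finds it equals $s^2>0$, and reads off both $A>0$ and $\det A=s^{-2}\cdot s^2=1$; your explicit check of the Hessian entries and the perspective-function remark are correct additions.
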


\begin{proof}
The lower-right $2\times2$ block is $s^{-1}I_2>0$. Its Schur complement in $A$ is
\[
s^2+\frac{|z|^2}{s^3}
-
\left(-\frac z{s^2}\right)^T(sI_2)
\left(-\frac z{s^2}\right)
=s^2>0.
\]
Hence $A>0$. The same block determinant formula gives
\[
\det A
=\det(s^{-1}I_2)\,s^2
=s^{-2}s^2=1.
\]
\end{proof}

\subsection{Algebraic formulas on the central zero level}

On \(\{V=0\}\), the \(z\)-section over a fixed pair \((X,s)\) is
\[
  |z|^2=G(X,s),
  \qquad
  G(X,s):=-2s\Phi(X,s).
\]
Whenever \(G>0\), set
\[
  \mathcal R(X,s):=\sqrt{G(X,s)}.
\]
At \(X=0\),
\begin{equation}\label{eq:counter-G0}
  G(0,s)=-\frac{s}{150}P_0(s),
  \qquad
  P_0(s)=25s^4-100s+3.
\end{equation}
Since
\[
  P_0'(s)=100(s^3-1),
\]
\(P_0\) is strictly decreasing on \((0,1)\) and strictly increasing
on \((1,\infty)\).  Moreover,
\[
  P_0(0)=3>0,
  \qquad
  P_0(1)=-72<0,
  \qquad
  \lim_{s\to\infty}P_0(s)=+\infty.
\]
It therefore has exactly two positive roots, denoted by
\[
  a_0<b_0.
\]

Direct differentiation of \(\mathcal R=\sqrt G\) gives
\begin{equation}\label{eq:counter-radius-hessian}
  D^2_{(X,s)}\mathcal R
  =
  \frac{\mathcal M}{4G^{3/2}},
  \qquad
  \mathcal M
  =
  2G D^2G-\nabla G\otimes\nabla G.
\end{equation}
At \(X=0\), expansion and collection of terms yield
\begin{equation}\label{eq:counter-MXX}
  \mathcal M_{XX}(0,s)
  =
  -\frac{2s^2}{1875}H(s),
\end{equation}
and
\begin{equation}\label{eq:counter-detM}
  \det\mathcal M(0,s)
  =
  -\frac{s^2P_0(s)Q(s)}{7031250},
\end{equation}
where
\begin{equation}\label{eq:counter-H}
  H(s)
  =
  875s^5-25s^4-3500s^2+205s+3747
\end{equation}
and
\begin{equation}\label{eq:counter-Q}
  \begin{split}
  Q(s)
  ={}&
  109375s^9-3125s^8-700000s^6+46250s^5\\
  &\quad+749250s^4-105s+3.
  \end{split}
\end{equation}
At either tip of the central zero level, where \(z=0\), the
tangential quadratic form that will occur below is
\begin{equation}\label{eq:counter-J}
  \begin{split}
  J(s)
  &:=
  \left(\frac1{25}-\frac75s\right)
  \left(\frac{s^3-1}{3}\right)^2+s^2\\
  &=s^2-\frac{35s-1}{225}(s^3-1)^2.
  \end{split}
\end{equation}
We next establish all signs needed later by exact estimates.

\begin{lemma}
\label{lem:counter-quartic}
If
\[
  A>\frac{105^4}{256},
\]
then
\[
  As^4-105s+3>0
  \qquad\text{for every }s\geq0.
\]
\end{lemma}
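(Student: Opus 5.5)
The plan is to minimize the one-variable function $f(s)=As^4-105s+3$ on $[0,\infty)$ and show that the minimum value is positive. Since $f''(s)=12As^2\ge0$, the function $f$ is convex on $[0,\infty)$. Its derivative is $f'(s)=4As^3-105$, which vanishes at the unique point
\[
s_*=\left(\frac{105}{4A}\right)^{1/3}>0,
\]
so $f$ attains its global minimum on $[0,\infty)$ at $s_*$. At $s=0$ we simply have $f(0)=3>0$, so the only thing to check is the interior critical value.

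Next I evaluate $f$ at the minimizer. Using $As_*^4=s_*\cdot As_*^3=s_*\cdot\frac{105}{4}$, we get
\[
f(s_*)=\frac{105}{4}s_*-105s_*+3=3-\frac{315}{4}s_*.
\]
Positivity of $f(s_*)$ is therefore equivalent to $s_*<\frac{4}{105}$. Cubing, this is $\frac{105}{4A}<\frac{64}{105^3}$, that is,
\[
A>\frac{105^4}{256}.
\]
This is exactly the hypothesis of the lemma, so $f(s)\ge f(s_*)>0$ for every $s\ge0$.

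There is no genuine obstacle here. The only point needing care is the exact arithmetic $4\cdot 64=256$ and $105\cdot105^3=105^4$, which turns the threshold for $s_*$ into the stated bound on $A$. I expect this lemma to be used later to control the sign of the polynomials $P_0$, $Q$ or $H$, by absorbing the low-order terms $-105s+3$ into a large quartic coefficient.
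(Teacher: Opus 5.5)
Your proof is correct and follows the paper's argument essentially step for step. Both locate the unique critical point $(105/(4A))^{1/3}$, use $4As^3=105$ there to reduce the minimum value to $3-\frac{315}{4}s$, and turn the hypothesis $A>105^4/256$ into the condition that this point lies below $4/105$.
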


\begin{proof}
The function
\[
  f(s)=As^4-105s+3
\]
has a unique critical point on \([0,\infty)\), namely
\[
  r=\left(\frac{105}{4A}\right)^{1/3},
\]
and this point is its global minimum.  The assumption on \(A\) gives
\[
  r<\frac4{105}.
\]
Since \(4Ar^3=105\),
\[
  f(r)
  =
  3-\frac{315}{4}r
  >
  3-\frac{315}{4}\frac4{105}
  =0.
\]
\end{proof}

\begin{lemma}\label{lem:counter-signs}
The following statements hold.
\begin{enumerate}
\item
  \(H(s)>0\) and \(Q(s)>0\) for every \(0\leq s\leq8/5\).
\item
  The two positive roots of \(P_0\) satisfy
  \[
    \frac3{100}<a_0<\frac1{32},
    \qquad
    \frac32<b_0<\frac85.
  \]
\item
  \(J(a_0)>0\) and \(J(b_0)>0\).
\item
  The equation
  \begin{equation}\label{eq:counter-bad-quartic}
    125s^4-500s+51=0
  \end{equation}
  has a unique root \(s_*\) in
  \[
    \left(\frac1{10},\frac{21}{200}\right),
  \]
  and
  \[
    J(s)<0
    \qquad
    \text{throughout }
    \left[\frac1{10},\frac{21}{200}\right].
  \]
\end{enumerate}
\end{lemma}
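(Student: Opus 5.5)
The plan is to verify the four items in order by elementary one-variable estimates with exact rational arithmetic. Lemma~\ref{lem:counter-quartic} is used to absorb low-order negative terms. No computer algebra is required, only sign bookkeeping on explicit polynomials on bounded intervals.

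\textbf{Item (2).} This is an intermediate value argument. Since $P_0$ is decreasing on $(0,1)$ and increasing on $(1,\infty)$, it suffices to evaluate $P_0$ at the four endpoints:
\[
P_0(3/100)=3-3+25\cdot(3/100)^4>0,\qquad P_0(1/32)=25/32^4-100/32+3<0,
\]
\[
P_0(3/2)=25\cdot 81/16-147<0,\qquad P_0(8/5)=25\cdot 4096/625-157>0 .
\]
Each is a one-line rational computation.

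\textbf{Item (4).} The polynomial $125s^4-500s+51$ has derivative $500(s^3-1)<0$ on the interval, so it has at most one root there. Its values at $1/10$ and $21/200$ have opposite signs: at $1/10$ it is about $1.0125>0$, and at $21/200$ it is about $-0.5<0$.

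For $J<0$ on $[1/10,21/200]$, I use the second form of $J$ in \eqref{eq:counter-J}. Write it as $J(s)=s^2-\frac{35s-1}{225}(1-s^3)^2$. On this interval:
\begin{itemize}
\item $35s-1\geq 5/2$;
\item $(1-s^3)^2\geq (1-(21/200)^3)^2>0.99$;
\item $s^2\leq (21/200)^2<0.0111$.
\end{itemize}
The subtracted term is therefore at least $2.5\cdot0.99/225>0.011$. This comparison is tight, so I expect to sharpen it by splitting the interval into two subintervals, or by keeping the exact bound $35s-1\geq 35s-1$ and comparing $225s^2$ with $(35s-1)(1-s^3)^2$ as polynomials.

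\textbf{Item (3).} At $a_0\in(3/100,1/32)$ we have $35a_0-1\in(0.05,0.094)$. Hence the negative term is at most $0.094/225$, which is about $4.2\cdot10^{-4}$, while $a_0^2>9\cdot10^{-4}$, so $J(a_0)>0$.

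At $b_0\in(3/2,8/5)$ the factor $35s-1$ is positive and large, so the naive bound fails. Here I use the root relation $25b_0^4=100b_0-3$ to rewrite $(b_0^3-1)$. Multiplying by $25b_0$ gives
\[
25b_0(b_0^3-1)=75b_0-3 .
\]
So $(b_0^3-1)^2=(75b_0-3)^2/(625b_0^2)$, and $J(b_0)$ becomes a rational function of $b_0$ alone. Its sign reduces to a low-degree polynomial inequality on $(3/2,8/5)$, which I settle by monotonicity or endpoint bounds.

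\textbf{Item (1).} On $[0,8/5]$, I split $H$ and $Q$ into groups of terms, each shown positive.

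For $H$, take $875s^5-25s^4=s^4(875s-25)$. This is nonnegative only for $s\ge 1/35$, so I handle small $s$ separately, where $3747$ dominates. For the middle range I use $3747-3500s^2+205s$ combined with $875s^5$. Where $3500s^2$ is large ($s$ near $1.6$), $875s^5\approx 9175$ dominates $3500\cdot2.56=8960$.

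$Q$ is the main obstacle, being degree $9$ with large cancelling coefficients. I plan to treat it in two pieces:
\begin{itemize}
\item Write $Q=\big(109375s^9-700000s^6+749250s^4\big)+\big(-3125s^8+46250s^5\big)+\big(-105s+3\big)$.
\item The middle group is $\geq0$ for $s^3\le 14.8$, which covers $s\le 8/5$ since $(8/5)^3\approx 4.1$.
\item The first group equals $s^4\,(109375s^5-700000s^2+749250)$. Writing this as a quadratic-like expression in $s$ (or checking its minimum numerically with rational bounds), it should stay comfortably positive, with a definite amount $As^4$ to spare where $A>105^4/256\approx 4.75\cdot10^5$.
\item Lemma~\ref{lem:counter-quartic} then absorbs $-105s+3$.
\end{itemize}
If the inner quintic dips below $A$ near $s\approx1.3$, I will use the spare middle group $46250s^5$ there instead, where $s$ is bounded away from $0$ and $-105s+3$ is trivially dominated.
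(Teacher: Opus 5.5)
\textbf{The gap is in the positivity of $Q$.} You expect the first group $s^4q(s)$, with $q(s)=109375s^5-700000s^2+749250$, to ``stay comfortably positive''. That is false. The only positive critical point of $q$ is $s_c$ with $s_c^3=64/25$, and there $q(s_c)=749250-420000s_c^2\approx-3.67\cdot10^4$. So $q<0$ on roughly $(1.23,1.49)$, and the first group is about $-1.25\cdot10^5$ near $s=1.36$.

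Your fallback says the middle group is used only where ``$-105s+3$ is trivially dominated''. In fact the middle group must cancel a large negative quantity, and the proposal gives no argument that it can. It does, and the repair fits your decomposition:
\begin{itemize}
\item On $[1,8/5]$: since $1.368^3>64/25$, you get $q\ge q(s_c)>-36749$ for all $s\ge0$. The function $46250s-3125s^4$ is concave with endpoint values $43125$ and $53520$. Hence $Q\ge 6376s^4+3-105s>0$ there.
\item On $[0,1/2]$: $q$ is decreasing on $[0,s_c]$, so $q\ge q(1/2)>5.7\cdot10^5>105^4/256$, and Lemma~\ref{lem:counter-quartic} applies. No single constant $A$ works on all of $[0,8/5]$.
\item On $[1/2,1]$: $q\ge q(1)=158625$, which trivially dominates the linear term.
\end{itemize}

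\textbf{Comparison with the paper.} The paper splits at $1/20,1/2,1$. On $[1,8/5]$ it writes $Q=125s^4K(s)+3-105s$ and gets $K\ge116$ from $K''\ge6000$ and a quadratic expansion at $11/8$. Your bracket $q+46250s-3125s^4$ is exactly $125K$. In the repaired route, the exact critical value of the quintic plus concavity of the remainder replaces that strong-convexity estimate.

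\textbf{Remaining items.} These are sound in outline.
\begin{itemize}
\item Item~(2) and $J(a_0)>0$ coincide with the paper's argument.
\item $J(b_0)>0$ via $25b_0^4=100b_0-3$ is a valid alternative to the paper's argument ($J'<0$ on $[3/2,8/5]$ together with $J(8/5)>0$). Substitute the root relation once more: the sign of $J(b_0)$ is that of $562500b_0-16875-(35b_0-1)(75b_0-3)^2$. Endpoint bounds give at least $826875-752895>0$.
\item For $H$, the delicate range is near $s=7/6$, where $H\approx1067$, not near $8/5$. Monotone termwise bounds on subintervals of length $1/10$ suffice, or use the paper's $H''\ge10200$ argument.
\item In item~(4), your bound $2.5\cdot0.99/225$ equals $0.011$, which is less than $441/40000$, so it does not close. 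With the exact factor $(1-(21/200)^3)^2\approx0.99769$ you get $\approx0.011085$, which closes. This is precisely the paper's computation.
\item Also in item~(4), $125s^4-500s+51$ at $21/200$ is about $-1.48$, not $-0.5$; only the sign matters.
\end{itemize}
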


\begin{proof}
\emph{Step 1: positivity of \(H\).}
For \(0\leq s\leq1\), the inequality \(s^4\leq s^2\) gives
\[
  H(s)\geq3747+205s-3525s^2.
\]
The expression on the right is concave, so its minimum on
\([0,1]\) occurs at an endpoint.  Its endpoint values are \(3747\)
and \(427\).  Hence \(H>0\) on \([0,1]\).

For \(1\leq s\leq8/5\),
\[
  H''(s)
  =
  100(175s^3-3s^2-70)
  \geq10200,
\]
because the expression in parentheses is increasing on this interval
and equals \(102\) at \(s=1\).  At \(s_0=7/6\),
\[
  H(s_0)=\frac{8298407}{7776},
  \qquad
  H'(s_0)=-\frac{19745}{1296}.
\]
Strong convexity yields
\[
  \begin{split}
  H(s)
  &\geq
  H(s_0)+H'(s_0)(s-s_0)+5100(s-s_0)^2\\
  &\geq
  H(s_0)-\frac{H'(s_0)^2}{20400}.
  \end{split}
\]
The last number is exactly
\[
  \frac{1462628429591}{1370566656}>1000.
\]
Thus \(H>0\) on the whole interval \([0,8/5]\).

\emph{Step 2: positivity of \(Q\).}
Suppose first that \(0\leq s\leq1/20\).  Then
\[
  700000s^6+3125s^8
  \leq
  \left(1750+\frac5{256}\right)s^4.
\]
Dropping the positive terms \(46250s^5+109375s^9\), we obtain
\[
  Q(s)\geq3-105s+A_0s^4,
  \qquad
  A_0
  =
  749250-1750-\frac5{256}
  =
  \frac{191359995}{256}.
\]
Since
\[
  A_0>\frac{121550625}{256}=\frac{105^4}{256},
\]
Lemma~\ref{lem:counter-quartic} gives \(Q>0\).

If \(1/20\leq s\leq1/2\), then
\[
  -3125s^8+109375s^9
  =
  3125s^8(35s-1)\geq0,
\]
and
\[
  749250s^4-700000s^6
  \geq574250s^4.
\]
As \(574250>105^4/256\),
Lemma~\ref{lem:counter-quartic} again implies \(Q>0\).

If \(1/2\leq s\leq1\), the paired eighth- and ninth-degree terms remain
nonnegative, while
\[
  749250s^4-700000s^6\geq49250s^4.
\]
Consequently,
\[
  Q(s)\geq3-105s+49250s^4.
\]
The right-hand side is strictly increasing on \([1/2,1]\), because
its derivative \(197000s^3-105\) is positive there, and at \(s=1/2\)
its value is
\[
  \frac{24229}{8}>0.
\]

Finally, let \(1\leq s\leq8/5\).  Write
\[
  Q(s)=125s^4K(s)+3-105s,
\]
where
\[
  K(s)
  =
  875s^5-25s^4-5600s^2+370s+5994.
\]
On this interval,
\[
  K''(s)
  =
  100(175s^3-3s^2-112)
  \geq6000.
\]
At \(s_1=11/8\),
\[
  K(s_1)=\frac{4142337}{32768},
  \qquad
  K'(s_1)=\frac{1426695}{4096}.
\]
Therefore
\[
  K(s)
  \geq
  K(s_1)-\frac{K'(s_1)^2}{12000}
  =
  \frac{312200798733}{2684354560}
  >116.
\]
It follows that
\[
  Q(s)
  \geq
  125\cdot116+3-105\cdot\frac85
  =
  14335>0.
\]
This proves the first assertion.

\emph{Step 3: the zero-level roots and the tip signs.}
Exact substitution gives
\[
  P_0\left(\frac3{100}\right)
  =
  \frac{81}{4000000}>0,
  \qquad
  P_0\left(\frac1{32}\right)
  =
  -\frac{131047}{1048576}<0,
\]
and
\[
  P_0\left(\frac32\right)
  =
  -\frac{327}{16}<0,
  \qquad
  P_0\left(\frac85\right)
  =
  \frac{171}{25}>0.
\]
Together with the monotonicity of \(P_0\), these inequalities prove
the second assertion.

For \(a_0\in(3/100,1/32)\),
\[
  \frac1{25}-\frac75a_0\geq-\frac3{800},
  \qquad
  \left(\frac{a_0^3-1}{3}\right)^2\leq\frac19,
  \qquad
  a_0^2\geq\frac9{10000}.
\]
Hence
\[
  J(a_0)
  \geq
  \frac9{10000}-\frac1{2400}
  =
  \frac{29}{60000}>0.
\]

On \([3/2,8/5]\), formula~\eqref{eq:counter-J} gives
\[
  J'(s)
  =
  2s
  -\frac{35}{225}(s^3-1)^2
  -\frac{6s^2(35s-1)(s^3-1)}{225}.
\]
Moreover,
\[
  s(35s-1)(s^3-1)
  \geq
  \frac32\cdot\frac{103}{2}\cdot\frac{19}{8}
  =
  \frac{5871}{32}>75.
\]
Thus the absolute value of the last term in \(J'\) is strictly larger
than \(2s\), and consequently \(J'<0\) on this interval.  Since
\[
  J\left(\frac85\right)
  =
  \frac{16949}{78125}>0,
\]
we obtain
\[
  J(b_0)>J\left(\frac85\right)>0.
\]
This proves the third assertion.

\emph{Step 4: the bad interior level.}
Let
\[
  P_*(s)=125s^4-500s+51.
\]
On \((0,1)\),
\[
  P_*'(s)=500(s^3-1)<0.
\]
Also,
\[
  P_*\left(\frac1{10}\right)
  =
  \frac{81}{80}>0,
  \qquad
  P_*\left(\frac{21}{200}\right)
  =
  -\frac{19005519}{12800000}<0.
\]
Thus \eqref{eq:counter-bad-quartic} has exactly one root
\[
  s_*\in\left(\frac1{10},\frac{21}{200}\right).
\]

For \(1/10\leq s\leq21/200\),
\[
  \frac1{25}-\frac75s\leq-\frac1{10},
  \qquad
  s^2\leq\left(\frac{21}{200}\right)^2,
\]
and
\[
  (1-s^3)^2
  \geq
  \left(1-\left(\frac{21}{200}\right)^3\right)^2.
\]
Therefore
\[
  \begin{split}
  J(s)
  &\leq
  \left(\frac{21}{200}\right)^2
  -\frac1{90}
   \left(1-\left(\frac{21}{200}\right)^3\right)^2\\
  &=
  -\frac{347909766121}{5760000000000000}
  <0.
  \end{split}
\]
This proves the fourth assertion.
\end{proof}

\subsection{ Construction of Strictly Convex Regions}\label{sec:closing}

\begin{proposition}[Strict convexity of the central section]
\label{prop:counter-central-section}
Let
\[
  \Sigma_0
  =
  \{(0,s,z):V(0,s,z)=0\}.
\]
Then \(\Sigma_0\) is a smooth compact two-dimensional section.  The
full zero-level hypersurface \(\{V=0\}\) is smooth near \(\Sigma_0\),
and at every point of \(\Sigma_0\) its second fundamental form is
positive definite for the normal pointing toward \(\{V>0\}\).
\end{proposition}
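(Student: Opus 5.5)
The plan is to describe $\Sigma_0$ explicitly, check that $DV\neq0$ on it, and then verify positivity of the second fundamental form separately in two regimes: interior points, where $z\neq0$, and the two tips, where $z=0$. Throughout, for a regular level set $\{V=0\}$ with unit normal $\nu=DV/|DV|$ pointing toward $\{V>0\}$, the second fundamental form is $D^2V|_{T}/|DV|$, with $T=DV^{\perp}$ the tangent space. So it suffices to show that $D^2V$ is positive definite on $T$.

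\emph{Structure and smoothness.} At $X=0$ the section is $\{|z|^2=G(0,s)\}$, and by \eqref{eq:counter-G0} we have $G(0,s)>0$ exactly when $P_0(s)<0$, i.e. for $s\in(a_0,b_0)$. Hence $\Sigma_0$ is the surface of revolution about the $s$-axis obtained by rotating the profile $|z|=\mathcal R(0,s)$, $a_0\le s\le b_0$, and it closes at the two tips $(0,a_0,0)$ and $(0,b_0,0)$. It is compact because $a_0,b_0$ are finite by Lemma~\ref{lem:counter-signs}(2).

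Next I check $DV\neq0$ on $\Sigma_0$. We have $D_zV=z/s$, which is nonzero whenever $z\neq0$. At a tip, $V_s=\Phi_s(0,s)=L(0)+s^3/3=(s^3-1)/3$, which is nonzero since $a_0<1<b_0$. The implicit function theorem then gives smoothness of $\{V=0\}$ near $\Sigma_0$. It also shows that $\Sigma_0$ is a smooth $2$-sphere: near the tips $\partial_sV\neq0$, so one can write $s$ as a function of $z$ there.

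\emph{Interior points ($z\neq0$).} Near such points the hypersurface is $\{|z|=\mathcal R(X,s)\}$ with $\{V<0\}=\{|z|<\mathcal R\}$. For a hypersurface of this rotational form, the second fundamental form with respect to the outward normal (toward $\{V>0\}$) splits into two parts. One is a positive multiple of $1/\mathcal R$ on the rotational circle direction. The other is a positive multiple of $-D^2_{(X,s)}\mathcal R$ on the $(X,s)$-directions lifted to the surface. I would confirm this directly: parametrize by $(X,s,\theta)\mapsto(X,s,\mathcal R(X,s)e^{i\theta})$ and compute $D^2V$ on tangent vectors, using $V=\Phi+|z|^2/(2s)$.

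So it suffices to show that $\mathcal M$ in \eqref{eq:counter-radius-hessian} is negative definite at $X=0$, $s\in(a_0,b_0)$.
\begin{itemize}
\item By \eqref{eq:counter-MXX} and $H>0$ on $[0,8/5]\supset(a_0,b_0)$ (Lemma~\ref{lem:counter-signs}(1),(2)), we get $\mathcal M_{XX}<0$.
\item By \eqref{eq:counter-detM}, together with $P_0<0$ on $(a_0,b_0)$ and $Q>0$ there, we get $\det\mathcal M>0$.
\end{itemize}
Hence $D^2\mathcal R<0$, and the form is positive definite.

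\emph{Tips ($z=0$, $s\in\{a_0,b_0\}$).} Here $DV=(V_X,V_s,0,0)$ with $V_X=C'(0)+L'(0)s=1$ and $V_s=(s^3-1)/3$. The tangent space is therefore spanned by the $z$-directions together with $\tau=\bigl((s^3-1)/3,-1,0,0\bigr)$.

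At $z=0$ the Hessian entries are:
\begin{itemize}
\item $V_{zz}=s^{-1}I_2>0$;
\item the mixed entries $V_{Xz}$ and $V_{sz}$ vanish;
\item $V_{XX}=C''(0)+L''(0)s=\tfrac1{25}-\tfrac75 s$, $V_{Xs}=L'(0)=0$, and $V_{ss}=s^2$.
\end{itemize}
Thus $D^2V$ restricted to $T$ is block diagonal, namely $s^{-1}I_2\oplus\langle D^2V\tau,\tau\rangle$, and $\langle D^2V\tau,\tau\rangle=J(s)$ from \eqref{eq:counter-J}. Lemma~\ref{lem:counter-signs}(3) gives $J(a_0),J(b_0)>0$, so the form is positive definite at the tips as well.

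The main obstacle is the interior step: justifying cleanly that positive definiteness of the full second fundamental form is equivalent to $\mathcal R>0$ together with $D^2_{(X,s)}\mathcal R<0$, including the mixed $X$–$s$ tangent directions. I would handle it by the explicit parametrization, checking that the cross terms between the $\theta$-direction and the $(X,s)$-directions vanish by rotational symmetry.
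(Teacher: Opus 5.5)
Your proposal is correct and follows the paper's proof essentially step for step. It uses nonvanishing of $D_zV=z/s$ away from the tips and of $V_s=(s^3-1)/3$ at the tips, negative definiteness of $\mathcal M$ (hence of $D^2_{(X,s)}\mathcal R$) from $\mathcal M_{XX}<0$ and $\det\mathcal M>0$ at points with $z\neq0$, and the tangential form $J(s)\zeta_s^2+s^{-1}|\pi|^2$ at the two tips. The splitting you flag as the main obstacle is handled in the paper by switching to the defining function $|z|-\mathcal R(X,s)$, whose Hessian on a tangent vector $(\zeta,\pi)$ is $\bigl(|\pi|^2-(e\cdot\pi)^2\bigr)/\mathcal R-\zeta^TD^2\mathcal R\,\zeta$ with $e=z/|z|$, so there are visibly no cross terms.
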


\begin{proof}
By \eqref{eq:counter-G0}, \(G(0,s)>0\) exactly for
\(a_0<s<b_0\).  On this interval,
\eqref{eq:counter-MXX} and
Lemma~\ref{lem:counter-signs} give
\[
  \mathcal M_{XX}(0,s)<0.
\]
Equation~\eqref{eq:counter-detM}, together with
\(P_0<0\) and \(Q>0\), gives
\[
  \det\mathcal M(0,s)>0.
\]
Thus the symmetric \(2\times2\) matrix \(\mathcal M\) is negative
definite, and hence
\[
  D^2_{(X,s)}\mathcal R(0,s)<0.
\]

At a point of the zero level with \(z\neq0\), use the defining
function
\[
  F(X,s,z)=|z|-\mathcal R(X,s).
\]
Put \(e=z/|z|\).  If
\((\zeta,\pi)\in\mathbb R^2_{(X,s)}\times\mathbb R^2_z\)
is tangent to \(\{F=0\}\), then
\[
  e\cdot\pi=\nabla\mathcal R\cdot\zeta,
\]
and
\begin{equation}\label{eq:counter-II-radius}
  D^2F[(\zeta,\pi),(\zeta,\pi)]
  =
  \frac{|\pi|^2-(e\cdot\pi)^2}{\mathcal R}
  -
  \zeta^TD^2\mathcal R\,\zeta
  >0.
\end{equation}

It remains to consider the two tips \(z=0\), where
\(s\in\{a_0,b_0\}\).  There,
\[
  V_X=1,
  \qquad
  V_s=\frac{s^3-1}{3},
  \qquad
  V_{Xs}=0,
\]
\[
  V_{ss}=s^2,
  \qquad
  V_{XX}=\frac1{25}-\frac75s.
\]
The tangency condition is
\[
  \zeta_X+\frac{s^3-1}{3}\zeta_s=0.
\]
Consequently,
\[
  D^2V[(\zeta_X,\zeta_s,\pi),
       (\zeta_X,\zeta_s,\pi)]
  =
  J(s)\zeta_s^2+\frac1s|\pi|^2>0
\]
by Lemma~\ref{lem:counter-signs}.  At a tip \(V_s\neq0\); away from
the tips, \(z\neq0\) and
\[
  D_zV=\frac{z}{s}\neq0.
\]
Thus both the section in \(\{X=0\}\) and the full zero-level
hypersurface are smooth at the stated points.  Compactness follows
from
\[
  s\in[a_0,b_0],
  \qquad
  |z|^2=G(0,s).
\]
\end{proof}

\begin{lemma}[A strictly convex zero-level band]
\label{lem:counter-zero-band}
There exists \(\eta_0>0\), which may be chosen so that
\[
  4\eta_0\leq\frac1{10},
\]
such that
\[
  \widehat\Sigma
  =
  \{(X,s,z):V(X,s,z)=0,\ |X|<4\eta_0\}
\]
is a smooth strictly convex hypersurface band.  For every
\(|X|<4\eta_0\), the equation
\[
  \Phi(X,s)=0
\]
has exactly two simple positive roots
\[
  a(X)<b(X),
\]
where \(a\) is strictly convex and \(b\) is strictly concave.
Moreover,
\[
  D_*
  =
  \{(X,s):|X|<4\eta_0,\ a(X)<s<b(X)\}
\]
is convex, and
\[
  \mathcal R(X,s)
  =
  \sqrt{-2s\Phi(X,s)}
\]
is strictly concave on \(D_*\).
\end{lemma}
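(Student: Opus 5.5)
The proof is a perturbation argument from the central section $\{X=0\}$, where Proposition~\ref{prop:counter-central-section} and Lemma~\ref{lem:counter-signs} already give strict inequalities with margin. Everything is then extended to a thin band $|X|<4\eta_0$ by continuity and compactness. Note that $\Phi(X,s)=C(X)+L(X)s+s^4/12$ is polynomial in $(X,s)$, so all quantities are smooth in $X$.

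\textbf{Roots.} First we handle the roots. At $X=0$ one has $\Phi(0,s)=P_0(s)/300$. By Lemma~\ref{lem:counter-signs}(2), the roots $a_0<b_0$ are simple: $\Phi_s(0,s)=(s^3-1)/3\neq0$ there, since $a_0<1<b_0$. The implicit function theorem then gives smooth roots $a(X)$ and $b(X)$ near $X=0$. Uniqueness of these two positive roots for small $|X|$ follows from three observations:
\begin{itemize}
\item $\Phi_{ss}=s^2>0$, so $\Phi(X,\cdot)$ is strictly convex in $s$ and has at most two zeros;
\item $\Phi(X,0)=C(X)>0$;
\item $\Phi(X,1)<0$ by continuity.
\end{itemize}

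\textbf{Convexity of $a$ and concavity of $b$.} Implicit differentiation gives
\[
\sigma''(X)=-\frac{\Phi_{XX}+2\Phi_{Xs}\sigma'+\Phi_{ss}(\sigma')^2}{\Phi_s}
\]
for $\sigma=a,b$. At $X=0$ we have $\sigma'=-1/\Phi_s=-3/(s^3-1)$, and the numerator is exactly $J(s)\cdot 9/(s^3-1)^2$ in the notation of \eqref{eq:counter-J}. The signs are as follows:
\begin{itemize}
\item $J>0$ at $a_0$ and $b_0$ by Lemma~\ref{lem:counter-signs}(3);
\item $\Phi_s<0$ at $a_0$ and $\Phi_s>0$ at $b_0$.
\end{itemize}
Hence $a''(0)>0$ and $b''(0)<0$, and both signs persist for small $|X|$ by continuity.

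\textbf{Convexity of $D_*$.} Since $a$ is convex and $b$ is concave, $D_*$ is the intersection of the epigraph of $a$, the hypograph of $b$, and a slab, so it is convex.

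\textbf{Strict concavity of $\mathcal R$.} This is the main obstacle, because uniformity degenerates at the boundary curves $s=a(X)$ and $s=b(X)$, where $G=0$ and $\mathcal R$ is not smooth. I would work with $\mathcal M=2GD^2G-\nabla G\otimes\nabla G$ from \eqref{eq:counter-radius-hessian}, which is polynomial and hence smooth up to the boundary. It suffices to show $\mathcal M<0$ on the closure of $D_*$.

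At $X=0$:
\begin{itemize}
\item $\mathcal M_{XX}(0,s)<0$ on $[a_0,b_0]$ by \eqref{eq:counter-MXX} and positivity of $H$ on $[0,8/5]$;
\item $\det\mathcal M(0,s)=-s^2P_0Q/7031250\geq0$, vanishing only at $s=a_0,b_0$.
\end{itemize}
The determinant therefore degenerates exactly at the tips, and the fix is to treat the boundary separately. On the boundary $G=0$, so $\mathcal M=-\nabla G\otimes\nabla G$, which is only negative semidefinite. We recover strictness through the identity
\[
D^2\mathcal R=\frac{D^2G}{2\mathcal R}-\frac{\nabla G\otimes\nabla G}{4\mathcal R^3}.
\]
Near the boundary the second term dominates in the direction of $\nabla G$, while in the tangential direction $\zeta\perp\nabla G$ the sign is governed by $\zeta^TD^2G\,\zeta$. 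At $X=0$ this tangential quantity is, up to a positive factor, $-J$, and hence negative.

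Concretely, I would split $\overline{D_*}$ into two regions:
\begin{itemize}
\item a compact interior region $\{G\geq\delta\}$, where $\mathcal M<0$ follows by continuity from $X=0$ (by strictness on $[a_0+\epsilon,b_0-\epsilon]$ and compactness);
\item a boundary collar, where one writes $\zeta=t\nabla G+w$ and uses that $w^TD^2G\,w<0$ on the tangent line, a consequence of the continuity of $J$'s sign at $a(X)$ and $b(X)$. A Cauchy--Schwarz absorption of the cross terms then gives $\zeta^T\mathcal M\zeta<0$ for all $\zeta\neq0$, with constants uniform for small $|X|$ and small $G$.
\end{itemize}
Both estimates are stable under small $X$, so choosing $\eta_0$ small enough (also with $4\eta_0\le1/10$) yields strict concavity of $\mathcal R$ on $D_*$.

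\textbf{The hypersurface band.} Finally, strict convexity of $\widehat\Sigma$ follows exactly as in Proposition~\ref{prop:counter-central-section}:
\begin{itemize}
\item away from $z=0$, we use \eqref{eq:counter-II-radius} with $D^2\mathcal R<0$;
\item at the tips, the quadratic form $J$ is evaluated at $(X,a(X))$ and $(X,b(X))$, where it stays positive by continuity;
\item smoothness follows from $V_s\neq0$ at the tips and $D_zV\neq0$ elsewhere.
\end{itemize}
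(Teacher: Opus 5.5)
Your proof is correct, but for the last two assertions it runs in the opposite direction from the paper.

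\textbf{The paper's order.} It first proves that the band $\widehat\Sigma$ is strictly convex. Positivity of the second fundamental form is an open condition on the compact section $\Sigma_0$ of Proposition~\ref{prop:counter-central-section}, and a compactness argument shows that all zero-level points with $|X|$ small lie near $\Sigma_0$. Only afterwards does it deduce $D^2\mathcal R<0$ on $D_*$, by inserting the tangent vector $(\zeta,(\nabla\mathcal R\cdot\zeta)e)$ into \eqref{eq:counter-II-radius}; this choice kills the first term.

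\textbf{Your order.} You prove $\mathcal M<0$ on $D_*$ directly, using a compact interior region plus a boundary collar with Cauchy--Schwarz absorption. You then obtain the band's convexity pointwise from \eqref{eq:counter-II-radius} and the tip forms. The key input of your collar estimate is $w^TD^2G\,w<0$ for $w\perp\nabla G$ on $\{G=0\}$. This carries the same information as the tip curvature: there $D^2G=-2s\,D^2\Phi$ on tangent vectors, which at $X=0$ equals $-2sJ\zeta_s^2$.

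\textbf{What each route buys.} The paper's route avoids the degeneracy of $\mathcal R=\sqrt G$ at $\partial D_*$ entirely, because $\{V=0\}$ is smooth across $z=0$. Yours handles that degeneracy by hand, at the cost of ordering the constants (collar neighbourhood, then $\delta$, then $\eta_0$). In exchange, it makes explicit why the vanishing of $\det\mathcal M(0,\cdot)$ at $a_0,b_0$ is harmless. It also spares you the paper's localization step, since the explicit form $V=\Phi+|z|^2/(2s)$ already describes the whole zero level over $\overline{D_*}$. Your root-counting via strict convexity of $\Phi(X,\cdot)$ is equivalent to the paper's use of the unique positive zero of $\Phi_s$.

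\textbf{Two small corrections.}
\begin{enumerate}
\item The sentence ``it suffices to show $\mathcal M<0$ on the closure of $D_*$'' cannot hold as written. On $\partial D_*$ one has $\mathcal M=-\nabla G\otimes\nabla G$, which has rank one. The right target is the open set $D_*$, and that is what your collar argument actually establishes.
\item $J$ is defined only at $X=0$, so ``$J$ evaluated at $(X,a(X))$'' is imprecise. A cleaner way to get both the tip sign and the collar sign for $X\neq0$: along the direction $(1,\sigma')$ at $(X,\sigma(X))$, the tangential Hessian of $\Phi$ equals $-\Phi_s\sigma''$. This quantity is positive on both branches precisely because $a''>0$ and $b''<0$.
\end{enumerate}
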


\begin{proof}
Proposition~\ref{prop:counter-central-section} gives strict convexity
of the full zero-level hypersurface along the compact section
\(\Sigma_0\).  Choose numbers
\[
  0<\underline s<a_0<b_0<\overline s
\]
such that
\[
  \Phi(0,s)>0
  \qquad
  \text{for }
  s\in(0,\underline s]\cup[\overline s,\infty).
\]
The growth of \(s^4/12\), together with continuous dependence of the
coefficients on \(X\), shows that, after shrinking \(\eta_0\), this
positivity remains uniform for \(|X|<4\eta_0\).  Consequently all
zero-level points in this \(X\)-slab lie in a fixed compact cylinder.
If there were zero-level points \(q_j=(X_j,s_j,z_j)\) with
\(X_j\to0\) that stayed outside every fixed neighborhood of
\(\Sigma_0\), a subsequence would converge in that cylinder to a
point \(q_\infty\) with \(X=0\) and \(V(q_\infty)=0\), hence to a
point of \(\Sigma_0\), a contradiction.  Thus all such zero-level
points lie uniformly close to \(\Sigma_0\) after another shrinking of
\(\eta_0\).

Continuity of the gradient and of the tangential quadratic form now
implies that the zero level is smooth and has uniformly positive
tangential Hessian for \(|X|<4\eta_0\).

At \(X=0\), the two roots are simple because
\[
  \Phi_s(0,s)=\frac{s^3-1}{3}\neq0
  \qquad(s=a_0,b_0).
\]
The implicit function theorem therefore gives smooth branches
\(a(X)\) and \(b(X)\) on a neighborhood of
\([-4\eta_0,4\eta_0]\), after shrinking \(\eta_0\) if needed.
Furthermore,
\[
  \Phi_s(X,s)=\frac{s^3}{3}+L(X)
\]
has exactly one positive zero for every sufficiently small \(X\).
Hence \(\Phi(X,\cdot)\) has a unique positive minimum, and the two
simple roots already obtained are all of its positive roots.

For either root branch \(s=s(X)\),
\[
  s'=-\frac{\Phi_X}{\Phi_s}.
\]
At \(X=0\), since \(\Phi_{Xs}=0\) and \(\Phi_X=1\),
\[
  s''(0)
  =
  -\frac{\Phi_{XX}+\Phi_{ss}(s')^2}{\Phi_s}
  =
  -\frac{J(s)}{\Phi_s(0,s)^3}.
\]
At the lower root \(a_0<1\), one has \(\Phi_s<0\), while at the
upper root \(b_0>1\), one has \(\Phi_s>0\).  Therefore
Lemma~\ref{lem:counter-signs} gives
\[
  a''(0)>0,
  \qquad
  b''(0)<0.
\]
Continuity and another shrinking of \(\eta_0\) yield
\[
  a''>0,
  \qquad
  b''<0
\]
throughout the interval.

Thus \(D_*\) is convex: it is the intersection of the epigraph of the
convex function \(a\) and the hypograph of the concave function \(b\).
At points of the zero level with \(z\neq0\), fix \(e=z/|z|\) and, for a given nonzero \(\zeta\in\mathbb R^2\), choose
\[
  \pi=(\nabla\mathcal R\cdot\zeta)e.
\]
Then \((\zeta,\pi)\) is tangent and the first term in
\eqref{eq:counter-II-radius} vanishes. Positivity of the second
fundamental form therefore gives
\[
  -\zeta^TD^2\mathcal R\,\zeta>0.
\]
Hence
\[
  D^2\mathcal R<0
  \qquad\text{on }D_*.
\]
Finally, shrink \(\eta_0\) once more so that
\(4\eta_0\leq1/10\).
\end{proof}

Define
\begin{equation}\label{eq:counter-Kstar}
  K_*
  =
  \left\{
    (X,s,z):
    (X,s)\in\overline{D_*},
    \ |z|\leq\mathcal R(X,s)
  \right\},
\end{equation}
where \(\mathcal R\) is continuously extended to \(\overline{D_*}\); it vanishes on the two boundary graphs \(s=a(X)\) and \(s=b(X)\).

\begin{lemma}[Convexity and boundary structure of \(K_*\)]
\label{lem:counter-Kstar}
The set \(K_*\) is a compact convex body with nonempty interior, and
\begin{equation}\label{eq:counter-Kstar-boundary}
  \partial K_*\cap\{|X|<4\eta_0\}
  =
  \{V=0,\ |X|<4\eta_0\}.
\end{equation}
The right-hand side is a smooth strictly convex hypersurface band.
The boundary \(\partial K_*\) is generally not globally smooth: at
\(X=\pm4\eta_0\) it also contains artificial truncation faces, which
meet the zero-level band along edges.
\end{lemma}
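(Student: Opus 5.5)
The plan is to describe $K_*$ as a solid of revolution in the $z$-variables over the planar base $\overline{D_*}$, and then read off compactness, convexity and the boundary from Lemma~\ref{lem:counter-zero-band}.

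\emph{Compactness and interior.} The base $\overline{D_*}$ is compact, since $|X|\le4\eta_0$ and $a(X)\le s\le b(X)$ with $a,b$ continuous on the closed interval. The radius $\mathcal R$ is continuous on $\overline{D_*}$, so $K_*$ is closed and bounded. At $X=0$ and $s\in(a_0,b_0)$ we have $\mathcal R>0$; a small open ball around such a point $(0,s,0)$ therefore lies in $K_*$, and the interior is nonempty.

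\emph{Convexity.} Take $q_i=(X_i,s_i,z_i)\in K_*$ for $i=1,2$ and $\theta\in[0,1]$. Because $\overline{D_*}$ is convex, as the closure of the convex set $D_*$ from Lemma~\ref{lem:counter-zero-band}, the base point $(X_\theta,s_\theta)$ lies in $\overline{D_*}$. By strict concavity of $\mathcal R$ on $D_*$, extended to the closure by continuity,
\[
  |z_\theta|\le\theta|z_1|+(1-\theta)|z_2|\le\theta\mathcal R(X_1,s_1)+(1-\theta)\mathcal R(X_2,s_2)\le\mathcal R(X_\theta,s_\theta).
\]
Hence $q_\theta\in K_*$. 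The only care needed is that concavity passes to $\overline{D_*}$ through limits of interior segments, which is routine.

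\emph{Boundary identification in the slab $|X|<4\eta_0$.} First I show that $\mathrm{int}K_*\cap\{|X|<4\eta_0\}$ equals $\{(X,s,z):(X,s)\in D_*,\ |z|<\mathcal R\}$. On $D_*$ we have $\mathcal R>0$, since $\Phi<0$ strictly between the two roots, and $\mathcal R$ is continuous; so this set is open and contained in $K_*$. Conversely, a point with $|z|=\mathcal R(X,s)$ is a boundary point, because increasing $|z|$ leaves $K_*$. A point with $s=a(X)$ or $s=b(X)$ forces $z=0$, and moving $s$ outward leaves $\overline{D_*}$. This gives
\[
  \partial K_*\cap\{|X|<4\eta_0\}=\{(X,s,z):(X,s)\in\overline{D_*},\ |z|=\mathcal R(X,s),\ |X|<4\eta_0\}.
\]
Since $V=\Phi+|z|^2/(2s)$, the condition $|z|^2=\mathcal R^2=-2s\Phi$ is exactly $V=0$. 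For the reverse inclusion, note that $V=0$ forces $\Phi\le0$ (as $s>0$). Because $a(X),b(X)$ are the only positive roots and $\Phi(X,\cdot)$ has a unique positive minimum, we get $s\in[a(X),b(X)]$ and hence $(X,s)\in\overline{D_*}$. This proves \eqref{eq:counter-Kstar-boundary}.

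\emph{Smoothness and the truncation faces.} Smoothness and strict convexity of the band are already provided by Lemma~\ref{lem:counter-zero-band}. The remaining part of $\partial K_*$ is the pair of planar slices $X=\pm4\eta_0$, $|z|\le\mathcal R$. These are flat faces meeting the band transversally along the edges $|z|=\mathcal R(\pm4\eta_0,s)$, since $V_X\neq0$ is not parallel to $e_X$ there in general. This is only descriptive and needs no real proof.

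The main obstacle is the bookkeeping at the tips $s=a(X),b(X)$ and the claim that $\{V=0\}$ in the slab contains no stray points outside $\overline{D_*}$. Both reduce to the root description of $\Phi(X,\cdot)$ given in Lemma~\ref{lem:counter-zero-band}.
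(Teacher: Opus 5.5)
Your proposal is correct and follows essentially the same route as the paper. Compactness comes from the compact base $\overline{D_*}$ and continuity of $\mathcal R$; convexity comes from concavity of $\mathcal R$ on the convex base via $|z_\theta|\le\theta|z_1|+(1-\theta)|z_2|\le\mathcal R(X_\theta,s_\theta)$; and the boundary identification comes from the equivalence $V\le0\iff|z|\le\mathcal R(X,s)$ in the slab, which you spell out in more detail (including the check that $\{V=0\}$ has no points outside $\overline{D_*}$).

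The one loose phrase is in the descriptive edge remark. There the clean reason for transversality is that $D_yV\neq0$ everywhere on the zero level: $D_zV=z/s\neq0$ off the axis, and $V_s=\Phi_s\neq0$ at the simple roots. Hence $\nabla V$ is never parallel to the $X$-direction.
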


\begin{proof}
Compactness follows from compactness of \(\overline{D_*}\) and
continuity of \(\mathcal R\).  Since \(\mathcal R>0\) on \(D_*\), the
interior is nonempty.

Let \((w_i,z_i)\in K_*\), where \(w_i=(X_i,s_i)\), and let
\(0\leq\theta\leq1\).  By concavity of \(\mathcal R\) and convexity
of \(\overline{D_*}\),
\[
  \begin{split}
  |\theta z_1+(1-\theta)z_2|
  &\leq
  \theta|z_1|+(1-\theta)|z_2|\\
  &\leq
  \theta\mathcal R(w_1)+(1-\theta)\mathcal R(w_2)\\
  &\leq
  \mathcal R(\theta w_1+(1-\theta)w_2).
  \end{split}
\]
Hence \(K_*\) is convex.

For \(|X|<4\eta_0\),
\[
  V\leq0
  \quad\Longleftrightarrow\quad
  |z|^2\leq-2s\Phi(X,s)
  \quad\Longleftrightarrow\quad
  |z|\leq\mathcal R(X,s),
\]
which proves \eqref{eq:counter-Kstar-boundary}.  The artificial end
faces arise directly from the closed truncation in the \(X\)-variable.
\end{proof}

The desired completion is also a consequence of Ghomi's convex
completion theorem \cite[Theorem~1.1.1]{Ghomi}.  We give a direct
construction adapted to the present band.  The artificial end faces
will be removed by two large balls, and the resulting corners will be
rounded by a regularized minimum.  The support-distance branch is used
only in a fixed tubular neighborhood of the smooth central band, where
it is smooth and uniformly strictly concave.

\begin{lemma}[Uniformly convex closing]\label{lem:counter-closing}
There exists a smooth bounded uniformly convex domain
\[
  \widetilde\Omega
  \subset
  \mathbb R_X\times\mathbb R_y^3
\]
such that
\begin{equation}\label{eq:counter-closing-equality}
  \overline{\widetilde\Omega}\cap\{|X|\leq3\eta_0\}
  =
  K_*\cap\{|X|\leq3\eta_0\}.
\end{equation}
In fact, the two closed sets \(\overline{\widetilde\Omega}\) and \(K_*\) agree in a neighborhood of
\[
  K_*\cap\{|X|\leq3\eta_0\}.
\]
Consequently, in a neighborhood of
\[
  \mathcal S
  =
  \{V=0,\ |X|\leq3\eta_0\},
\]
the boundary \(\partial\widetilde\Omega\) agrees exactly with
\(\{V=0\}\).  In particular, for
\[
  I=(-2\eta_0,2\eta_0),
\]
one has
\begin{equation}\label{eq:counter-central-channel}
  \widetilde\Omega\cap(I\times\mathbb R^3)
  =
  \{(X,y):X\in I,\ V(X,y)<0\}.
\end{equation}
\end{lemma}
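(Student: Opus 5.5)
The plan is to realize \(\widetilde\Omega\) as a smooth sublevel set \(\{\Theta<0\}\) of a globally convex function \(\Theta\). Near the central band, \(\Theta\) will agree exactly with a defining function of \(K_*\) that is strictly convex there. Far out, \(\Theta\) will be built from large balls that cut off the artificial faces at \(X=\pm4\eta_0\).

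\textbf{Local defining function.} Near the band \(\{V=0,\ |X|<4\eta_0\}\), I will not use \(V\) directly, since \(V\) is not convex. Instead I take the signed distance \(d\) to \(\partial K_*\), negative inside. By Lemma~\ref{lem:counter-Kstar}, \(\partial K_*\) is smooth and strictly convex on \(\{|X|<4\eta_0\}\). Fix a tubular neighborhood \(\mathcal N\) of \(\mathcal S'=\{V=0,\ |X|\le 7\eta_0/2\}\), of radius smaller than the minimal radius of curvature. On \(\mathcal N\), \(d\) is smooth and \(|Dd|=1\), and its Hessian is positive definite in directions tangent to the level sets. To get a function that is uniformly convex in all directions, I replace \(d\) by \(\Theta_0=d+Cd^2\) with \(C\) large, or equivalently by \(e^{Cd}-1\). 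The added term contributes \(2C\,Dd\otimes Dd\), which compensates in the normal direction, so \(D^2\Theta_0>0\) on a possibly smaller neighborhood. Moreover \(\{\Theta_0\le0\}\cap\mathcal N=K_*\cap\mathcal N\).

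\textbf{Capping the ends.} Next I build a second convex function \(\Theta_1\) that is negative on \(K_*\cap\{|X|\le3\eta_0\}\) and positive near the end faces. Take two huge balls \(B_\pm\) whose centers lie far out along the \(\mp X\) axis. Their boundaries pass between \(X=3\eta_0\) and \(X=4\eta_0\), so that each is almost a hyperplane \(\{X=\pm c\}\) with \(c\in(3\eta_0,4\eta_0)\) but has positive curvature. Set
\[
\Theta_1=\max\bigl(|x-x_+|^2-R^2,\ |x-x_-|^2-R^2\bigr)
\]
after normalization. I will then define \(\Theta\) as a regularized maximum of \(\Theta_0\), \(\Theta_1\), and a large outer strictly convex function \(\Theta_2\). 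The role of \(\Theta_2\) is to handle the region away from \(\mathcal N\). Since \(K_*\) is convex and compact, a support-type function of \(K_*\) plus \(\varepsilon|x|^2\) can be glued in outside \(\mathcal N\).

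A regularized maximum of strictly convex functions is smooth and strictly convex. It equals the actual maximum wherever one argument exceeds the others by a fixed margin. I will choose the smoothing parameter so that near \(K_*\cap\{|X|\le3\eta_0\}\) the function \(\Theta_0\) dominates with margin. There the other two functions are very negative: \(\Theta_1\) because the ball caps lie beyond \(|X|=3\eta_0\), and \(\Theta_2\) because it is scaled down. This gives exact agreement with \(K_*\) near that set, which is \eqref{eq:counter-closing-equality}.

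\textbf{Conclusion and main obstacle.} Once \(\Theta\) is in hand, \(\{\Theta=0\}\) is a regular level with nonvanishing gradient by convexity, since \(\min\Theta<0\). Strict convexity of \(\Theta\) then gives uniform convexity of the boundary. Equation \eqref{eq:counter-central-channel} follows immediately from \eqref{eq:counter-closing-equality} and \eqref{eq:counter-Kstar-boundary}.

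The hard part is making \(\Theta\) globally convex. The function \(\Theta_0\) is only defined and convex on the tube \(\mathcal N\), so it has to be extended convexly to all of \(\mathbb R^4\) before the max can be taken. I will try the standard device first: for \(x\) in the tube, write \(\Theta_0(x)=\sup\) over boundary points \(b\) near \(\mathcal S'\) of the local quadratic supports of \(\Theta_0\) at \(b\), and then use this supremum of convex quadratics as the global definition. This is convex by construction. It coincides with \(\Theta_0\) near \(\mathcal S\) because of strict convexity, but that coincidence has to be checked using uniform curvature bounds. The delicate point is the edges, where the band meets the truncation faces: I must make sure that the ball caps dominate there, and do not dominate near \(|X|\le3\eta_0\).
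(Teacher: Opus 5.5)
Your architecture is the paper's: a defining function for $K_*$ with definite Hessian near the smooth band, two huge balls with boundaries in $3\eta_0<|X|<4\eta_0$ to remove the artificial faces, a regularized max (the paper uses a regularized min of concave functions), and exact protection of $K_*\cap\{|X|\le3\eta_0\}$ by a margin. There is, however, a genuine gap at exactly the step you flag as the main obstacle, and that step is where the proof lives. The missing observation is that no extension is needed. For a convex body, the signed distance (negative inside) is already globally convex, because
\[
d(x)=\max_{|\omega|=1}\bigl(\omega\cdot x-h_{K_*}(\omega)\bigr)
\]
is a supremum of affine functions. Hence $e^{d}-1$ is convex on all of $\mathbb R^4$, and its zero sublevel set is $K_*$. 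At points of $\partial K_*$ its Hessian is $D^2d+Dd\otimes Dd$, which is positive definite, so even $C=1$ suffices. The paper uses exactly this, in concave form: $d(q)=\min_{|\omega|=1}\{h(\omega)-\omega\cdot q\}$ and $\rho=1-e^{-d}$.

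Given this, smoothness and a definite Hessian are needed only on a tube $U$ around $\partial K_*\cap\{|X|\le\chi\}$, for some fixed $\chi\in(3\eta_0,4\eta_0)$. The remaining argument is the active-branch localization, which you mention only as ``delicate''. In your sign convention it runs as follows. If the $d$-branch is active at a point $q$ with $\Theta(q)=0$, the regularized-max bounds give $-O(\mu)\le e^{d(q)}-1\le0$. The ball branch gives $|X(q)|\le\chi$. Now $(K_*\cap\{|X|\le\chi\})\setminus U$ is compact and disjoint from $\partial K_*$, so $d$ is bounded away from $0$ there. For small $\mu$ this forces $q\in U$. This is the paper's Step~4, and with it your $\Theta_2$ is unnecessary.

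Your proposed substitutes do not close the gap. First, the tube $\mathcal N$ is not convex. A local quadratic support of $\Theta_0$ at a boundary point $b$ is therefore not automatically below $\Theta_0$ at distant points of the tube. Proving that it is requires two global facts: each support hyperplane at a band point meets $K_*$ only there, and a compactness bound $-\nu(b)\cdot(x-b)\ge c|x-b|^2$. That is the same convex geometry that already makes $d$ globally convex. Second, a supremum of quadratics is in general only Lipschitz. You would still have to prove it is smooth wherever it is active on $\{\Theta=0\}$, including at the junction with the ball caps. Third, a ``support-type function of $K_*$ plus $\varepsilon|x|^2$'' is not smooth along the faces and edges of $K_*$. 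So $\Theta_2$ either is never active at the new boundary, and is then superfluous, or it destroys smoothness.

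Two smaller points. The choice $d+Cd^2$ is not a monotone function of $d$, so it cannot be convex globally; use $e^{Cd}-1$. A regularized maximum is smooth only where its inputs are smooth. Your plain $\max$ in $\Theta_1$ is harmless only because both ball functions are very negative near $\{X=0\}\cap K_*$, and this should be stated.
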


\begin{proof}
Fix a number
\[
  3\eta_0<\chi<4\eta_0.
\]
Let \(y_0=(1,0,0)\).  By \eqref{eq:counter-profile},
\[
  V(X,y_0)
  =
  -\frac6{25}+X-\frac{17}{25}X^2.
\]
Since \(4\eta_0\leq1/10\), for \(|X|\leq4\eta_0\),
\[
  V(X,y_0)
  \leq
  -\frac6{25}+\frac1{10}
  =
  -\frac7{50}<0.
\]
Thus
\[
  q_0=(0,y_0)
\]
is an interior point of the central model region.

\smallskip
\noindent
\emph{Step 1: a smooth strictly concave defining function near the
central band.}
Let
\[
  h(\omega)
  =
  \max_{p\in K_*}\omega\cdot p,
  \qquad
  |\omega|=1,
\]
be the support function of \(K_*\), and define
\begin{equation}\label{eq:counter-support-distance}
  d(q)
  =
  \min_{|\omega|=1}
  \{h(\omega)-\omega\cdot q\}.
\end{equation}
As an infimum of affine functions, \(d\) is concave.  The supporting
half-space representation of a convex body gives
\[
  K_*=\{d\geq0\},
  \qquad
  \partial K_*=\{d=0\}.
\]

Set
\[
  \Gamma_\chi
  =
  \partial K_*\cap\{|X|\leq\chi\}.
\]
Since \(\chi<4\eta_0\), this compact set is contained in the interior
of the smooth strictly convex band furnished by
Lemma~\ref{lem:counter-zero-band}.  At each \(p\in\Gamma_\chi\), the
supporting direction is the outward unit normal \(\nu(p)\), and its
support hyperplane meets \(K_*\) only at \(p\).  Indeed, if a second
point \(p'\neq p\) lay in the same support hyperplane, convexity would
place the segment \([p,p']\) in \(\partial K_*\).  This would give a
nontrivial boundary segment issuing from \(p\), contradicting the
positive definite second fundamental form there.

We record the localization consequence carefully.  On a slightly
larger compact subband, the Gauss map is a local diffeomorphism because
the second fundamental form is positive definite.  It is also
injective on the set of supporting normals relevant to
\(\Gamma_\chi\): two points with the same normal would lie in the same
support hyperplane, contradicting the preceding uniqueness.  Hence the
normal exponential map
\[
  (p,t)\longmapsto p-t\nu(p)
\]
is a diffeomorphism for \(|t|\) smaller than a uniform positive
constant, after restricting to a neighborhood of \(\Gamma_\chi\).

Equivalently, there is an open tube \(U\) around \(\Gamma_\chi\) in
which the minimizer in \eqref{eq:counter-support-distance} is unique.
For completeness, minimizers coming from the artificial faces cannot
enter this tube.  Otherwise there would be points \(q_j\to p\in
\Gamma_\chi\) and minimizing directions \(\omega_j\) not belonging to
the local normal image.  Passing to a subsequence,
\(\omega_j\to\omega_\infty\).  Since \(d\) is Lipschitz and
\(d(p)=0\),
\[
  h(\omega_\infty)-\omega_\infty\cdot p=0,
\]
so \(\omega_\infty\) supports \(K_*\) at \(p\).  Smoothness at \(p\)
forces \(\omega_\infty=\nu(p)\), and the local normal
diffeomorphism then forces \(\omega_j\) into the local normal image
for large \(j\), a contradiction.

It follows that, on \(U\), \(d\) is precisely the signed Euclidean
distance to \(\partial K_*\), taken positive inside.  In particular,
for \(p\in\Gamma_\chi\) and
\(\tau\in T_p\partial K_*\),
\begin{equation}\label{eq:counter-distance-hessian}
  \nabla d(p)=-\nu(p),
  \qquad
  D^2d(p)[\tau,\tau]
  =
  -\operatorname{II}_p[\tau,\tau],
  \qquad
  D^2d(p)[\nu(p),\cdot]=0.
\end{equation}
No smoothness of \(d\) near the artificial faces or their edges is
being asserted.

Define
\begin{equation}\label{eq:counter-rho}
  \rho=1-e^{-d}.
\end{equation}
The scalar function \(t\mapsto1-e^{-t}\) is increasing and concave;
hence \(\rho\) is concave and has the same sign as \(d\).  On
\(\{d=0\}\),
\[
  D^2\rho
  =
  D^2d-\nabla d\otimes\nabla d.
\]
If \(\xi=\tau+a\nu(p)\), then
\eqref{eq:counter-distance-hessian} yields
\begin{equation}\label{eq:counter-rho-boundary-hessian}
  D^2\rho(p)[\xi,\xi]
  =
  -\operatorname{II}_p[\tau,\tau]-a^2<0
  \qquad(\xi\neq0).
\end{equation}
By compactness of \(\Gamma_\chi\), after shrinking \(U\) there is
\(\lambda>0\) such that
\begin{equation}\label{eq:counter-rho-strong-concavity}
  D^2\rho\leq-\lambda I_4
  \qquad\text{on }U.
\end{equation}

\smallskip
\noindent
\emph{Step 2: two large balls remove the artificial faces.}
For \(\Lambda>0\), define
\begin{align}
  g_+(X,y)
  &=
  (\Lambda+\chi)^2-(X+\Lambda)^2-|y-y_0|^2,
  \label{eq:counter-gplus}\\
  g_-(X,y)
  &=
  (\Lambda+\chi)^2-(X-\Lambda)^2-|y-y_0|^2.
  \label{eq:counter-gminus}
\end{align}
Both functions are strictly concave:
\begin{equation}\label{eq:counter-ball-hessian}
  D^2g_+=D^2g_-=-2I_4.
\end{equation}
The ball \(\{g_+\geq0\}\) lies in \(\{X\leq\chi\}\), whereas
\(\{g_-\geq0\}\) lies in \(\{X\geq-\chi\}\).

Let
\[
  E=K_*\cap\{|X|\leq3\eta_0\}.
\]
This set is compact.  On \(E\),
\[
  g_+(X,y)
  =
  2\Lambda(\chi-X)+\chi^2-X^2-|y-y_0|^2,
\]
\[
  g_-(X,y)
  =
  2\Lambda(\chi+X)+\chi^2-X^2-|y-y_0|^2.
\]
Set
\[
  B_E
  =
  \max_E\bigl(X^2+|y-y_0|^2-\chi^2\bigr)<\infty.
\]
Choose \(\Lambda\) so large that
\[
  2\Lambda(\chi-3\eta_0)\geq B_E+3.
\]
Then
\begin{equation}\label{eq:counter-ball-gap}
  g_+\geq3,
  \qquad
  g_-\geq3
  \qquad\text{on }E.
\end{equation}

\smallskip
\noindent
\emph{Step 3: regularized minima and exact protection of the central
patch.}
Choose an even smooth convex function \(\vartheta_\mu\) such that
\[
  |t|\leq\vartheta_\mu(t)\leq|t|+\mu,
  \qquad
  \vartheta_\mu(t)=|t|\ \text{if }|t|\geq\mu,
  \qquad
  |\vartheta_\mu'(t)|\leq1.
\]
For two functions \(a,b\), put
\begin{equation}\label{eq:counter-regularized-min}
  a\wedge_\mu b
  =
  \frac{a+b-\vartheta_\mu(a-b)}2.
\end{equation}
Then
\begin{equation}\label{eq:counter-regularized-min-bounds}
  \min(a,b)-\frac\mu2
  \leq
  a\wedge_\mu b
  \leq
  \min(a,b).
\end{equation}
Moreover, \(a\wedge_\mu b\) equals \(\min(a,b)\) wherever
\(|a-b|\geq\mu\).  It is nondecreasing in each argument and concave
as a function of the pair \((a,b)\).  Hence the regularized minimum of
concave functions is concave.

Set
\begin{equation}\label{eq:counter-rhomu}
  \rho_\mu
  =
  (\rho\wedge_\mu g_+)\wedge_\mu g_-,
  \qquad
  \widetilde K=\{\rho_\mu\geq0\}.
\end{equation}
The function \(\rho_\mu\) is concave, so \(\widetilde K\) is convex.
Also,
\[
  \rho_\mu\leq\rho,
  \qquad
  \rho_\mu\leq g_+,
  \qquad
  \rho_\mu\leq g_-,
\]
and therefore
\[
  \widetilde K
  \subset
  K_*\cap\{g_+\geq0\}\cap\{g_-\geq0\}
  \subset
  K_*\cap\{|X|\leq\chi\}.
\]
Thus \(\widetilde K\) is compact.

Inside \(K_*\), one has \(0\leq\rho<1\).  In view of
\eqref{eq:counter-ball-gap}, if \(0<\mu<1\), then on \(E\)
\[
  g_\pm-\rho>2>\mu.
\]
Both regularized minima therefore select \(\rho\) exactly:
\begin{equation}\label{eq:counter-protected-branch}
  \rho_\mu=\rho
  \qquad\text{on }E.
\end{equation}
This is a strict protected-branch gap.  Since \(E\) is compact and
all the functions involved are continuous, the same branch selection
holds on an open neighborhood \(N_E\) of \(E\):
\begin{equation}\label{eq:counter-protected-neighborhood}
  \rho_\mu=\rho
  \qquad\text{on }N_E.
\end{equation}
If \(q\notin K_*\), then \(\rho(q)<0\) and
\(\rho_\mu(q)\leq\rho(q)\), so \(q\notin\widetilde K\).  Together
with \eqref{eq:counter-protected-branch}, this proves
\begin{equation}\label{eq:counter-Ktilde-central}
  \widetilde K\cap\{|X|\leq3\eta_0\}
  =
  K_*\cap\{|X|\leq3\eta_0\},
\end{equation}
and \eqref{eq:counter-protected-neighborhood} gives equality in a
neighborhood of the protected central set.  In particular \(q_0\) is
an interior point of \(\widetilde K\).

\smallskip
\noindent
\emph{Step 4: active-branch localization and smooth strict
concavity near the new boundary.}
For smooth \(a,b\), differentiation of
\eqref{eq:counter-regularized-min} gives
\begin{equation}\label{eq:counter-min-hessian}
  \begin{split}
  D^2(a\wedge_\mu b)
  ={}&
  \alpha D^2a+(1-\alpha)D^2b\\
  &-
  \frac{\vartheta_\mu''(a-b)}2
  (Da-Db)\otimes(Da-Db),
  \end{split}
\end{equation}
where \(0\leq\alpha\leq1\).  The last term is negative
semidefinite.

Let \(q\in\partial\widetilde K\), and put
\[
  h=\rho\wedge_\mu g_+.
\]
Since
\[
  0=(h\wedge_\mu g_-)(q)
\]
and the regularized minimum is no larger than either input,
\[
  h(q),\ g_-(q),\ \rho(q),\ g_+(q)\geq0.
\]
By \eqref{eq:counter-regularized-min-bounds},
\[
  \min\{h(q),g_-(q)\}\leq\frac\mu2.
\]
Suppose first that the outer regularized minimum depends on \(h\) in
every neighborhood of \(q\).  Then continuity implies
\[
  h(q)-g_-(q)\leq\mu;
\]
otherwise it would equal \(g_-\) and be independent of \(h\) in a
neighborhood of \(q\).  If \(h(q)\leq g_-(q)\), then
\(h(q)=\min\{h(q),g_-(q)\}\leq\mu/2\).  If
\(g_-(q)<h(q)\), then \(g_-(q)\leq\mu/2\) and
\(h(q)\leq g_-(q)+\mu\).  Thus, including the threshold case
\(|h-g_-|=\mu\),
\begin{equation}\label{eq:counter-h-small}
  h(q)\leq\frac{3\mu}{2}.
\end{equation}
Applying \eqref{eq:counter-regularized-min-bounds} to
\(h=\rho\wedge_\mu g_+\), we find
\[
  \min\{\rho(q),g_+(q)\}\leq2\mu.
\]
If the inner regularized minimum depends on \(\rho\) in every
neighborhood of \(q\), the same argument gives
\(\rho(q)-g_+(q)\leq\mu\).  If \(\rho(q)\leq g_+(q)\), then
\(\rho(q)\leq2\mu\); otherwise
\(g_+(q)\leq2\mu\) and \(\rho(q)\leq g_+(q)+\mu\).  Hence
\begin{equation}\label{eq:counter-rho-small}
  0\leq\rho(q)\leq3\mu.
\end{equation}

Now the compact set
\[
  \bigl(K_*\cap\{|X|\leq\chi\}\bigr)\setminus U
\]
does not meet \(\partial K_*\). If it is nonempty, \(\rho\) has a strictly positive
minimum there; denote it by \(m_U\). If it is empty, fix any
\(m_U>0\). Choose \(\mu\) so small that
\[
  0<\mu<1,
  \qquad
  3\mu<m_U.
\]
Then
\begin{equation}\label{eq:counter-active-localization}
  \{0\leq\rho\leq3\mu,\ |X|\leq\chi\}\subset U.
\end{equation}

We can now inspect all possible active branches at \(q\).
\begin{itemize}
\item
If there is a neighborhood of \(q\) on which the outer minimum is
independent of \(h\), then it equals \(g_-\) there.
\item
Otherwise the estimate \eqref{eq:counter-h-small} applies.  If there
is a neighborhood of \(q\) on which the inner minimum is independent
of \(\rho\), then the inner minimum equals \(g_+\) there, so the
outer minimum involves only the two smooth ball branches.
\item
If neither of the preceding neighborhoods exists, then the estimate
\eqref{eq:counter-rho-small} applies, and
\eqref{eq:counter-active-localization} puts \(q\) in \(U\), where
\(\rho\) is smooth and satisfies
\eqref{eq:counter-rho-strong-concavity}.
\end{itemize}
Thus every branch active near the new boundary is smooth there.  The
ball branches have Hessian \(-2I_4\), and the \(\rho\)-branch, whenever
active, has Hessian at most \(-\lambda I_4\).  Formula
\eqref{eq:counter-min-hessian} therefore shows, point by point on
\(\partial\widetilde K\), that \(\rho_\mu\) is smooth and has negative
definite Hessian in a neighborhood of that point.  By compactness of
the new boundary, there is a single neighborhood on which
\begin{equation}\label{eq:counter-rhomu-strict}
  D^2\rho_\mu<0.
\end{equation}

By \eqref{eq:counter-protected-branch},
\(\rho_\mu(q_0)>0\). Concavity first shows that
\begin{equation}\label{eq:counter-positive-interior}
  \operatorname{int}\widetilde K=\{\rho_\mu>0\}.
\end{equation}
Indeed, if an interior point \(q\) satisfied \(\rho_\mu(q)=0\), then for sufficiently small \(\delta>0\) the point
\[
  q'=q+\delta(q-q_0)
\]
would still lie in \(\widetilde K\), and
\[
  q=\frac1{1+\delta}q'
  +\frac{\delta}{1+\delta}q_0.
\]
Concavity would then give \(\rho_\mu(q)>0\), a contradiction. Thus every zero of \(\rho_\mu\) lies on \(\partial\widetilde K\), where the preceding argument shows that \(\rho_\mu\) is smooth.

It remains to prove that zero is a regular value. If \(\rho_\mu(q)=0\), concavity gives
\[
  0<\rho_\mu(q_0)
  \leq
  \rho_\mu(q)+
  \nabla\rho_\mu(q)\cdot(q_0-q)
  =
  \nabla\rho_\mu(q)\cdot(q_0-q).
\]
Hence \(\nabla\rho_\mu(q)\neq0\).  Define
\[
  \widetilde\Omega
  =
  \operatorname{int}\widetilde K
  =
  \{\rho_\mu>0\},
\]
where the last identity is \eqref{eq:counter-positive-interior}.
Its outward normal is
\[
  -\frac{\nabla\rho_\mu}{|\nabla\rho_\mu|}.
\]
For every nonzero tangent vector \(\tau\),
\[
  \operatorname{II}_{\partial\widetilde\Omega}(\tau,\tau)
  =
  -\frac{D^2\rho_\mu[\tau,\tau]}
         {|\nabla\rho_\mu|}
  >0.
\]
The boundary is compact, so its principal curvatures have a uniform
positive lower bound.  Thus \(\widetilde\Omega\) is uniformly convex.
Since \(\widetilde K\) is a convex body with nonempty interior,
\(\overline{\widetilde\Omega}=\widetilde K\). Therefore
Equation~\eqref{eq:counter-closing-equality} follows from
\eqref{eq:counter-Ktilde-central}; the neighborhood assertion follows
from \eqref{eq:counter-protected-neighborhood}; and
\eqref{eq:counter-central-channel} follows immediately from
\eqref{eq:counter-Kstar-boundary}.
\end{proof}

\subsection{Approximation in the slow channel}
\label{sec:channel-comparison}

For \(0<\varepsilon<1\), define the physical domain
\begin{equation}\label{eq:counter-stretched-domain}
  \Omega_\varepsilon
  =
  \{(x,y)\in\mathbb R\times\mathbb R^3:
    (\varepsilon x,y)\in\widetilde\Omega\}.
\end{equation}
It is obtained from \(\widetilde\Omega\) by an invertible linear
stretch in the \(X\)-direction.  Hence, for every fixed
\(\varepsilon>0\), the domain \(\Omega_\varepsilon\) is smooth,
bounded, and uniformly convex.  Classical Hessian Dirichlet theory
\cite{caffarelli1985dirichlet} gives a unique
\[
  u_\varepsilon\in C^\infty(\overline{\Omega_\varepsilon})
\]
satisfying
\begin{equation}\label{eq:counter-physical-problem}
  \begin{cases}
  \sigma_3(D^2u_\varepsilon)=1
    &\text{in }\Omega_\varepsilon,\\
  u_\varepsilon=0
    &\text{on }\partial\Omega_\varepsilon,\\
  D^2u_\varepsilon\in\Gamma_3
    &\text{in }\overline{\Omega_\varepsilon}.
  \end{cases}
\end{equation}

Define the pullback to slow coordinates by
\begin{equation}\label{eq:counter-Ueps}
  U_\varepsilon(X,y)
  =
  u_\varepsilon(X/\varepsilon,y),
  \qquad
  (X,y)\in\widetilde\Omega.
\end{equation}

\begin{lemma}[An \(\varepsilon\)-independent bound]
\label{lem:counter-coarse-bound}
There is a constant \(C_0>0\), independent of \(\varepsilon\), such
that
\begin{equation}\label{eq:counter-coarse-bound}
  -C_0\leq u_\varepsilon\leq0
  \qquad\text{in }\Omega_\varepsilon.
\end{equation}
\end{lemma}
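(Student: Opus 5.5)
Both bounds in \eqref{eq:counter-coarse-bound} will come from the comparison principle for admissible solutions of $\sigma_3(D^2u)=1$. The upper bound $u_\varepsilon\le0$ is immediate: every $3$-admissible function is subharmonic, since $\Gamma_3\subset\Gamma_1$. Because $u_\varepsilon=0$ on $\partial\Omega_\varepsilon$, the maximum principle gives $u_\varepsilon\le0$.

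For the lower bound I will build an explicit admissible barrier $w$ on $\Omega_\varepsilon$ with $\sigma_3(D^2w)\ge1$ and $w\le0$ on $\partial\Omega_\varepsilon$, and with $\inf w\ge-C_0$ for a constant $C_0$ not depending on $\varepsilon$. The comparison principle then gives $u_\varepsilon\ge w\ge-C_0$. The difficulty is that $\Omega_\varepsilon$ has $x$-length of order $1/\varepsilon$. A radial quadratic barrier $\tfrac c2(|x|^2+|y|^2-R^2)$ would have depth of order $\varepsilon^{-2}$, so it is useless here.

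Instead I will use a barrier that depends only on the fast variables $y$. Since $\widetilde\Omega$ is bounded, there is $R>0$, independent of $\varepsilon$, with $\widetilde\Omega\subset\mathbb R_X\times\{|y-y_0|<R\}$. Because the stretch acts only on $X$, it follows that $\Omega_\varepsilon\subset\mathbb R_x\times\{|y-y_0|<R\}$ for every $\varepsilon$. I set
\[
w(x,y)=\frac c2\bigl(|y-y_0|^2-R^2\bigr),
\]
so that $D^2w=\operatorname{diag}(0,c,c,c)$ in the $(x,y)$ coordinates. Its eigenvalues are $(0,c,c,c)$, so $\sigma_1,\sigma_2,\sigma_3>0$, which means $D^2w\in\Gamma_3$, and $\sigma_3(D^2w)=c^3$. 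Taking $c=1$ gives $\sigma_3(D^2w)=1$, and $w\le0$ on $\partial\Omega_\varepsilon$. The barrier is degenerate in the $x$-direction, but only $\Gamma_3$-admissibility and the equation matter for comparison, not uniform convexity. Its depth is $R^2/2$, so $C_0=R^2/2$ works.

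The comparison step is the only point needing care. Both $w$ and $u_\varepsilon$ are smooth up to the boundary and $3$-admissible, with $\sigma_3(D^2w)=\sigma_3(D^2u_\varepsilon)$ in $\Omega_\varepsilon$ and $w\le u_\varepsilon$ on $\partial\Omega_\varepsilon$. The standard comparison principle for $k$-Hessian equations on admissible functions, as in \cite{caffarelli1985dirichlet}, then yields $w\le u_\varepsilon$. Should a strict inequality be preferred to avoid the degenerate direction, I would replace $w$ by $w_\delta=w+\tfrac\delta2\bigl(\varepsilon^2x^2-M\bigr)$ with small $\delta$, where $M$ bounds $X^2$ on $\widetilde\Omega$. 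Then $w_\delta\le w$ on $\partial\Omega_\varepsilon$ and $\sigma_3(D^2w_\delta)>1$. The depth of $w_\delta$ is still bounded uniformly in $\varepsilon$, since $\varepsilon^2x^2=X^2\le M$ on $\Omega_\varepsilon$. Letting $\delta\to0$ recovers the bound.
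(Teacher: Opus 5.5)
Your proof is correct and essentially the same as the paper's. Both use subharmonicity ($\Gamma_3\subset\Gamma_1$) with the maximum principle for $u_\varepsilon\le 0$. Both use the fast-variable barrier $\tfrac12(|y-y_0|^2-R^2)$, whose Hessian has eigenvalues $(0,1,1,1)$ and lies in $\Gamma_3$ with $\sigma_3=1$, together with the $\Gamma_3$ comparison principle, giving $C_0=R^2/2$. Your $\delta$-perturbation is valid but not needed, since the degenerate barrier's Hessian already lies in the open cone $\Gamma_3$.
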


\begin{proof}
Since \(D^2u_\varepsilon\in\Gamma_3\),
\[
  \Delta u_\varepsilon
  =
  \sigma_1(D^2u_\varepsilon)>0.
\]
The maximum principle gives \(u_\varepsilon\leq0\).

Choose \(R_*>0\) so that the \(y\)-projection of
\(\widetilde\Omega\) is contained in \(B_{R_*}^3\).  In physical
variables, set
\[
  \psi(x,y)
  =
  \frac12(|y|^2-R_*^2).
\]
Its Hessian has eigenvalues \((0,1,1,1)\).  In particular,
\[
  \sigma_1(D^2\psi)=3,
  \qquad
  \sigma_2(D^2\psi)=3,
  \qquad
  \sigma_3(D^2\psi)=1,
\]
so \(D^2\psi\in\Gamma_3\).  On \(\partial\Omega_\varepsilon\),
\[
  \psi\leq0=u_\varepsilon.
\]
The \(\Gamma_3\) comparison principle gives
\(\psi\leq u_\varepsilon\).  Thus one may take
\[
  C_0=\frac{R_*^2}{2}.
\]
\end{proof}

For a symmetric \(3\times3\) matrix \(A\), let
\[
  T_1(A)=\sigma_1(A)I_3-A
\]
be its first Newton transformation.

\begin{lemma}[\(3+1\) block formula]\label{lem:counter-block}
If \(A\in\operatorname{Sym}(3)\), \(\beta\in\mathbb R^3\), and
\(d\in\mathbb R\), then
\begin{equation}\label{eq:counter-block}
  \sigma_3
  \begin{pmatrix}
    A&\beta\\
    \beta^T&d
  \end{pmatrix}
  =
  \det A+d\sigma_2(A)-\beta^TT_1(A)\beta.
\end{equation}
\end{lemma}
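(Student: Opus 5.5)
We will obtain \eqref{eq:counter-block} from the expansion \eqref{detexpansion}: \(\sigma_3(M)\) is the coefficient of \(t\) in \(\det(M+tI_4)\). The symmetric matrix
\[
M=\begin{pmatrix}A&\beta\\ \beta^T&d\end{pmatrix}
\]
is split off along its last row and column. Since both sides of \eqref{eq:counter-block} are polynomials in the entries of \((A,\beta,d)\), we may assume \(A\) invertible and argue by density, exactly as was done for \eqref{eq:g-block}.

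\textbf{Step 1: Schur complement.} For small \(t\) such that \(A+tI_3\) is invertible, the Schur complement formula gives
\[
\det(M+tI_4)=\det(A+tI_3)\bigl(d+t\bigr)-\beta^T\operatorname{adj}(A+tI_3)\beta .
\]
This uses \(\det(A+tI_3)\,(A+tI_3)^{-1}=\operatorname{adj}(A+tI_3)\), which is \eqref{adjA}. The right-hand side is now manifestly polynomial in \(t\), and the identity extends to all \(t\).

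\textbf{Step 2: extract the coefficient of \(t\).} We read off the first-order coefficient of each piece.

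For the determinant term, \(\det(A+tI_3)=\det A+t\sigma_2(A)+O(t^2)\). Hence \(\det(A+tI_3)(d+t)\) contributes \(\det A+d\,\sigma_2(A)\) to the coefficient of \(t\).

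For the adjugate term, we use the standard \(3\times3\) formula \(\operatorname{adj}(B)=B^2-\sigma_1(B)B+\sigma_2(B)I_3\), which is Cayley--Hamilton and is the Newton transform \(T_2(B)\). Substituting \(B=A+tI_3\) and differentiating at \(t=0\) gives
\[
\frac{d}{dt}\Big|_{t=0}\operatorname{adj}(A+tI_3)=2A-\sigma_1(A)I_3-3A+\sigma_1(A)\cdot 2\,I_3\;\text{(from }\sigma_2(A+tI)=\sigma_2(A)+2t\sigma_1(A)+\dots).
\]
More carefully, the derivative of \(\sigma_1(A+tI)(A+tI)\) is \(3A+\sigma_1(A)I_3\). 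Therefore the derivative equals
\[
2A-3A-\sigma_1(A)I_3+2\sigma_1(A)I_3=\sigma_1(A)I_3-A=T_1(A).
\]
Equivalently, \(D_I\operatorname{adj}A=T_1(A)\), the analogue of the identity used in \eqref{eq:g-block0}.

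\textbf{Step 3: collect terms.} Combining the two contributions gives exactly \(\det A+d\sigma_2(A)-\beta^TT_1(A)\beta\), which is \eqref{eq:counter-block}.

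The only delicate point is the derivative of the adjugate, and it reduces to the Cayley--Hamilton bookkeeping above. As a cross-check, one can take \(A\) diagonal and expand \(\sigma_3(M)\) directly as a sum of principal \(3\times3\) minors. The minors avoiding index \(4\) give \(\det A\). A minor on indices \(\{i,j,4\}\) gives \(d\,a_ia_j-a_i\beta_j^2-a_j\beta_i^2\), and summing over \(i<j\) yields \(d\sigma_2(A)-\sum_k\beta_k^2(\sigma_1(A)-a_k)\), which matches \(\beta^TT_1(A)\beta\).
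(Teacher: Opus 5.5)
Your proof is correct, and it takes a different route from the paper.

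\textbf{The paper's route.} The paper argues directly from the definition of $\sigma_3$ as the sum of the four principal $3\times3$ minors. The minor omitting index $4$ is $\det A$. In the three minors on $\{i,j,4\}$, the terms containing $d$ add up to $d\,\sigma_2(A)$, and the terms quadratic in $\beta$ add up to $-\beta^TT_1(A)\beta$. Your diagonal-$A$ cross-check is exactly this computation. It becomes a complete proof once you conjugate by $\operatorname{diag}(O,1)$ with $O$ orthogonal: this preserves $\sigma_3$ of the bordered matrix, $\det A$, $\sigma_2(A)$, and $\beta^TT_1(A)\beta$ after $\beta\mapsto O^T\beta$.

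\textbf{Your route.} You read $\sigma_3$ off as the $t$-coefficient of $\det(M+tI_4)$, apply the Schur complement, and identify $\frac{d}{dt}\big|_{t=0}\operatorname{adj}(A+tI_3)=T_1(A)$ via Cayley--Hamilton. This is the same mechanism the paper uses for \eqref{eq:g-block0}. In fact $\sigma_3(M)=D_{I_4}\det M=D_{P}\det M+D_{e_4e_4^T}\det M$ with $P=\operatorname{diag}(I_3,0)$, so the lemma is just \eqref{eq:g-block} with $m=3$ plus the cofactor term $\det A$.

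\textbf{What each buys.} The paper's route is shorter and purely elementary for this fixed size, but it leaves the off-diagonal bookkeeping to the reader. Yours explains why a Newton transform appears. It also generalizes verbatim, via $\operatorname{adj}(A+tI_m)=\sum_{j=0}^{m-1}t^{m-1-j}T_j(A)$, to $\sigma_k(M)=\sigma_k(A)+d\,\sigma_{k-1}(A)-\beta^TT_{k-2}(A)\beta$ for bordered matrices of any size.

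\textbf{Cosmetic points.} The density reduction to invertible $A$ is unnecessary: $A+tI_3$ is invertible for all but finitely many $t$, and the identity is polynomial in $t$. Your first display in Step 2 is the same computation as the corrected one that follows, so you can drop it.
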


\begin{proof}
The quantity \(\sigma_3\) is the sum of the four \(3\times3\)
principal minors.  The minor obtained by deleting the last row and
column is \(\det A\).  In the other three minors, the terms containing
\(d\) add up to \(d\sigma_2(A)\), while the terms quadratic in
\(\beta\) add up to
\(-\beta^TT_1(A)\beta\).  Expanding these three minors proves
\eqref{eq:counter-block}.
\end{proof}

For a slow-coordinate function \(W=W(X,y)\), define its physical
Hessian by
\begin{equation}\label{eq:counter-Heps}
  \mathcal H_\varepsilon[W]
  =
  \begin{pmatrix}
    D_y^2W&\varepsilon D_yW_X\\
    \varepsilon(D_yW_X)^T&\varepsilon^2W_{XX}
  \end{pmatrix}.
\end{equation}
The change of variables \(X=\varepsilon x\) shows that
\begin{equation}\label{eq:counter-Ueq}
  \sigma_3(\mathcal H_\varepsilon[U_\varepsilon])=1.
\end{equation}

For the model profile, write
\[
  A=D_y^2V,
  \qquad
  b=D_yV_X,
  \qquad
  c=V_{XX}.
\]
From \eqref{eq:counter-CL},
\begin{equation}\label{eq:counter-bc}
  b=
  \begin{pmatrix}
    -\dfrac75X\\0\\0
  \end{pmatrix},
  \qquad
  c=\frac1{25}-\frac75s.
\end{equation}
Using \(\det A=1\) and Lemma~\ref{lem:counter-block},
\begin{equation}\label{eq:counter-residual}
  \sigma_3(\mathcal H_\varepsilon[V])
  =
  1+\varepsilon^2\mathcal E,
  \qquad
  \mathcal E
  =
  c\sigma_2(A)-b^TT_1(A)b.
\end{equation}

\begin{lemma}[Exact quadratic-barrier identity]
\label{lem:counter-barrier-identity}
Let \(\theta>0\), \(A_0\in\mathbb R\), and \(X_0\in\mathbb R\), and
set
\[
  W=\theta V+A_0(X-X_0)^2.
\]
Then
\begin{equation}\label{eq:counter-barrier-identity}
  \sigma_3(\mathcal H_\varepsilon[W])
  =
  \theta^3(1+\varepsilon^2\mathcal E)
  +
  2A_0\theta^2\varepsilon^2\sigma_2(A).
\end{equation}
\end{lemma}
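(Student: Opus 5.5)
We compute $\mathcal H_\varepsilon[W]$ blockwise and then apply the block formula of Lemma~\ref{lem:counter-block}. Since $A_0(X-X_0)^2$ depends only on $X$, it contributes nothing to $D_y^2W$ or to $D_yW_X$. Its only contribution is $2A_0$ to $W_{XX}$. Hence
\[
  \mathcal H_\varepsilon[W]
  =
  \begin{pmatrix}
    \theta A&\varepsilon\theta b\\
    \varepsilon\theta b^T&\varepsilon^2(\theta c+2A_0)
  \end{pmatrix},
\]
where $A=D_y^2V$, $b=D_yV_X$ and $c=V_{XX}$ are as in \eqref{eq:counter-bc}.

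Next we apply \eqref{eq:counter-block} with the $3\times3$ block $\theta A$, the vector $\beta=\varepsilon\theta b$ and the corner entry $d=\varepsilon^2(\theta c+2A_0)$. We use three homogeneity facts. First, $\det(\theta A)=\theta^3\det A=\theta^3$ by Lemma~\ref{lem:counter-transverse}. Second, $\sigma_2(\theta A)=\theta^2\sigma_2(A)$. Third, $T_1(\theta A)=\theta T_1(A)$, since $T_1$ is linear in its argument. These give
\[
  \sigma_3(\mathcal H_\varepsilon[W])
  =
  \theta^3+\varepsilon^2(\theta c+2A_0)\theta^2\sigma_2(A)
  -\varepsilon^2\theta^2\,\theta\, b^TT_1(A)b .
\]

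Finally we collect terms. The $\theta^3$ terms combine to $\theta^3\bigl(1+\varepsilon^2(c\sigma_2(A)-b^TT_1(A)b)\bigr)=\theta^3(1+\varepsilon^2\mathcal E)$, using the definition of $\mathcal E$ in \eqref{eq:counter-residual}. The remaining term is $2A_0\theta^2\varepsilon^2\sigma_2(A)$. Together these give \eqref{eq:counter-barrier-identity}.

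The only point needing care is the bookkeeping of the powers of $\theta$ and $\varepsilon$. The off-diagonal entries carry one factor of $\varepsilon$ each, so $\beta$ is quadratic-order in $\varepsilon$ overall. The quadratic term in $\beta$ is $\theta^2$ from $\beta$ times $\theta$ from $T_1(\theta A)$, which gives $\theta^3$. The $d$-term carries $\theta^2$ from $\sigma_2(\theta A)$ times $\theta c+2A_0$. We also note that the identity holds exactly, with no restriction on the sign of $A_0$, because the block formula is a polynomial identity.
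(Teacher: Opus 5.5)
Your proof is correct and follows the paper's own argument: you read off $D_y^2W=\theta A$, $D_yW_X=\theta b$, $W_{XX}=\theta c+2A_0$, substitute these into the $3+1$ block formula, and use $\det A=1$ together with the homogeneity of $\det$, $\sigma_2$, and $T_1$. One small wording slip: $\beta=\varepsilon\theta b$ is first order in $\varepsilon$, and it is the quadratic form $\beta^TT_1(\theta A)\beta$ that is of order $\varepsilon^2$.
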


\begin{proof}
We have
\[
  D_y^2W=\theta A,
  \qquad
  D_yW_X=\theta b,
  \qquad
  W_{XX}=\theta c+2A_0.
\]
Substitution into \eqref{eq:counter-block}, together with
\[
  \det(\theta A)=\theta^3\det A,
  \qquad
  \sigma_2(\theta A)=\theta^2\sigma_2(A),
  \qquad
  T_1(\theta A)=\theta T_1(A),
\]
gives \eqref{eq:counter-barrier-identity}.
\end{proof}

\begin{lemma}[\(\Gamma_3\) comparison principle]
\label{lem:counter-comparison}
Let \(D\) be bounded, and suppose that
\[
  u,v\in C^2(\overline D),
  \qquad
  D^2u(x),D^2v(x)\in\Gamma_3
  \quad\text{for every }x\in\overline D.
\]
If
\[
  \sigma_3(D^2u)\geq\sigma_3(D^2v)
  \quad\text{in }D,
  \qquad
  u\leq v
  \quad\text{on }\partial D,
\]
then \(u\leq v\) in \(D\).
\end{lemma}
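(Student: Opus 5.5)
We argue by contradiction via the classical maximum-principle argument, using that $\sigma_3$ is elliptic on $\Gamma_3$ and that $\Gamma_3$ is a convex cone. Suppose $M:=\max_{\overline D}(u-v)>0$. The first move is to replace $u$ by a strict subsolution. We set
\[
  u_\delta(x)=u(x)+\delta\bigl(|x-x_1|^2-R^2\bigr),
\]
where $x_1\in D$ and $R\geq\operatorname{diam}D$, so that $u_\delta\leq u\leq v$ on $\partial D$. Then $D^2u_\delta=D^2u+2\delta I$. Since $2\delta I\in\Gamma_n\subset\Gamma_3$ and $\Gamma_3$ is a convex cone, we have $D^2u_\delta\in\Gamma_3$ on $\overline D$. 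Moreover
\[
  \sigma_3(D^2u+2\delta I)>\sigma_3(D^2u).
\]
This strict increase holds because, by Proposition~\ref{prop:salaniphd}, the derivative $\frac{d}{dt}\sigma_3(D^2u+tI)=\operatorname{Tr}T_2(D^2u+tI)$ is a sum of the eigenvalues $\sigma_{2}(\lambda|i)$, and these are positive on $\Gamma_3$ by the standard fact $\sigma_{k-1}(\lambda|i)>0$ on $\Gamma_k$. Hence $\sigma_3(D^2u_\delta)>\sigma_3(D^2v)$ in $D$. For $\delta$ small, $\max_{\overline D}(u_\delta-v)\geq M-\delta R^2>0$, so the maximum of $u_\delta-v$ is attained at an interior point $x_0$.

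At $x_0$ we have $D^2u_\delta(x_0)\leq D^2v(x_0)$. We then interpolate linearly: put $A_t=(1-t)D^2u_\delta(x_0)+tD^2v(x_0)$. Each $A_t$ lies in $\Gamma_3$ by convexity of the cone (Gårding, recalled in Section~2). This gives
\[
  \sigma_3(D^2v)-\sigma_3(D^2u_\delta)=\int_0^1\operatorname{Tr}\bigl(T_2(A_t)\,(D^2v-D^2u_\delta)\bigr)\,dt\geq0,
\]
since $T_2(A_t)=(\partial\sigma_3/\partial a_{ij})(A_t)$ is positive definite on $\Gamma_3$ (again by Proposition~\ref{prop:salaniphd}) and $D^2v-D^2u_\delta\geq0$ at $x_0$. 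This contradicts the strict inequality $\sigma_3(D^2u_\delta)>\sigma_3(D^2v)$ at $x_0$. Therefore $u\leq v+\delta R^2$ for every small $\delta$, and letting $\delta\to0$ gives $u\leq v$.

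The only point needing care, and the main obstacle, is the positivity $\sigma_2(\lambda|i)>0$ for $\lambda\in\Gamma_3$, which is used both for ellipticity along the segment $A_t$ and for strict monotonicity in the direction $I$. I would cite it from the hyperbolic-polynomial facts of Section~2. By Theorem~\ref{thm:directional-hyperbolic}, $D_{e_i}\sigma_3=\sigma_2(\cdot|i)$ is hyperbolic with cone containing $\Gamma_3$, so it is positive there. Alternatively, one can invoke the classical Caffarelli--Nirenberg--Spruck inequality directly.
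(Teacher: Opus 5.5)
Your proof is correct and is essentially the paper's argument. Both rely on the convexity of $\Gamma_3$, so that the segment between the two Hessians stays in the cone, and on the positive definiteness of $T_2=\partial\sigma_3/\partial a_{ij}$ on $\Gamma_3$, fed into the mean-value linearization of $\sigma_3$ along that segment. The only difference is how the maximum principle step is handled. The paper linearizes on all of $D$ to get $a^{ij}(u-v)_{ij}\geq0$ and then quotes the linear weak maximum principle. You instead inline that principle through the strict-subsolution perturbation $u+\delta(|x-x_1|^2-R^2)$ and linearize only at the interior maximum, at the small extra cost of the strict monotonicity of $\sigma_3$ in the direction $I$, which you justify correctly.
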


\begin{proof}
The G{\aa}rding cone \(\Gamma_3\) is convex.  Put \(w=u-v\).
The differential identity
\[
  D\sigma_3(M)=T_2(M)
\]
gives
\[
  0
  \leq
  \sigma_3(D^2u)-\sigma_3(D^2v)
  =
  a^{ij}w_{ij},
\]
where
\[
  a^{ij}
  =
  \int_0^1
  T_2\bigl(D^2v+t(D^2u-D^2v)\bigr)_{ij}\,dt.
\]
For matrices in \(\Gamma_3\), the second Newton transformation
\(T_2\) is positive definite.  Since the segment of Hessians is a
compact subset of \(\Gamma_3\), the matrix \(a^{ij}\) is uniformly
positive definite.  The linear maximum principle yields \(w\leq0\).
\end{proof}

Choose intervals
\[
  I_1\Subset I_2\Subset I=(-2\eta_0,2\eta_0),
  \qquad
  0\in I_1,
\]
and set
\[
  d_0=\operatorname{dist}(\overline I_1,\partial I_2)>0,
  \qquad
  r=\frac{d_0}{2}.
\]
Define the compact channel
\begin{equation}\label{eq:counter-Q2}
  Q_2
  =
  \{(X,y):X\in\overline I_2,\ V(X,y)\leq0\}.
\end{equation}
By \eqref{eq:counter-central-channel}, this is the genuine central
channel in \(\widetilde\Omega\).  On \(Q_2\), put
\begin{equation}\label{eq:counter-channel-constants}
  M_V=\sup_{Q_2}(-V),
  \qquad
  E_0=\sup_{Q_2}|\mathcal E|,
  \qquad
  S_0=\sup_{Q_2}\sigma_2(A).
\end{equation}
These quantities are finite, and \(S_0>0\).

\begin{theorem}[\(O(\varepsilon^2)\) slow-channel approximation]
\label{thm:counter-channel}
There are constants \(C>0\) and \(\varepsilon_1>0\) such that, for
every \(0<\varepsilon<\varepsilon_1\),
\begin{equation}\label{eq:counter-channel-estimate}
  \sup_{\{X\in\overline I_1,\ V(X,y)\leq0\}}
  |U_\varepsilon(X,y)-V(X,y)|
  \leq C\varepsilon^2.
\end{equation}
\end{theorem}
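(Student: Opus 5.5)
The plan is a two-sided barrier argument on the channel $Q_2$, using the $\Gamma_3$ comparison principle (Lemma~\ref{lem:counter-comparison}) and the exact identity of Lemma~\ref{lem:counter-barrier-identity}. Pulled back to slow coordinates, $U_\varepsilon$ solves $\sigma_3(\mathcal H_\varepsilon[U_\varepsilon])=1$ by \eqref{eq:counter-Ueq}. We compare it with
\[
  W_\pm=\theta_\pm V+A_\pm(X-X_0)^2+\kappa_\pm,
\]
posed on a truncated channel $\{X\in X_0+(-r,r),\ V\le0\}\subset Q_2$ for each center $X_0\in\overline I_1$. This domain has two kinds of boundary: the genuine lateral boundary $\{V=0\}$, which by \eqref{eq:counter-central-channel} is part of $\partial\widetilde\Omega$, so $U_\varepsilon=0$ there; and the artificial end caps $X=X_0\pm r$, where we only know $-C_0\le U_\varepsilon\le0$ from Lemma~\ref{lem:counter-coarse-bound}. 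The quadratic term in $X$ is chosen to absorb the end caps.

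\textbf{Subsolution (lower bound).} Take $\theta_-=1+K\varepsilon^2$ and $A_-=-B\varepsilon^{2}$, with $K,B$ fixed. Lemma~\ref{lem:counter-barrier-identity} then gives
\[
  \sigma_3=\theta_-^3(1+\varepsilon^2\mathcal E)+2A_-\theta_-^2\varepsilon^2\sigma_2(A)\ge1+\varepsilon^2(3K-E_0)-O(\varepsilon^4),
\]
since $A_-\varepsilon^2=O(\varepsilon^4)$. This is $\ge1$ once $3K>E_0$. Admissibility $\mathcal H_\varepsilon[W_-]\in\Gamma_3$ holds for small $\varepsilon$, because the $y$-block $\theta A$ is positive definite with $\det A=1$ (Lemma~\ref{lem:counter-transverse}) and the off-diagonal and $XX$ entries are $O(\varepsilon)$. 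Alternatively, one can perturb the matrix continuously from $\mathcal H_0$, which lies in $\Gamma_3$, via the block formula.

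The trouble is the end caps: $W_-=\theta V-B\varepsilon^2r^2$ must be $\le U_\varepsilon\ge -C_0$ there, which forces $B\varepsilon^2r^2\ge C_0$, i.e.\ $B\sim\varepsilon^{-2}$. Then $A_-=O(1)$, and the correction $2A_-\varepsilon^2\sigma_2\ge -2|A_-|\varepsilon^2S_0$ is still $O(\varepsilon^2)$. So we set $A_-=-C_0/r^2$, independent of $\varepsilon$, and choose $K$ with $3K>E_0+2|A_-|S_0+1$. On the caps, $\theta V\le0$ gives $W_-\le -C_0\le U_\varepsilon$. On $\{V=0\}$, $W_-\le0=U_\varepsilon$. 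Comparison then yields $W_-\le U_\varepsilon$. Evaluating at $X=X_0$ gives
\[
  U_\varepsilon(X_0,y)\ge(1+K\varepsilon^2)V\ge V-K\varepsilon^2M_V.
\]

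\textbf{Supersolution (upper bound).} Symmetrically take $\theta_+=1-K\varepsilon^2$ and $A_+=0$; the caps are harmless here since $U_\varepsilon\le0$. We need $W_+=\theta_+V\ge U_\varepsilon$ on the boundary: on $\{V=0\}$ both vanish, but on the caps $\theta_+V<0$ while $U_\varepsilon\le0$, so that fails. The fix is to use a positive quadratic $A_+=C'/r^2$ so that $W_+\ge0\ge U_\varepsilon$ on the caps (with $C'\ge M_V$). On the lateral boundary $W_+=A_+(X-X_0)^2\ge0$, which is fine. The identity gives $\sigma_3\le1+\varepsilon^2(E_0+2A_+S_0-3K)+O(\varepsilon^4)\le1$ for $K$ large, and comparison with the roles reversed yields $U_\varepsilon\le W_+$, hence at $X=X_0$,
\[
  U_\varepsilon\le V+K\varepsilon^2M_V.
\]

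\textbf{Main obstacle.} The delicate step is verifying $\mathcal H_\varepsilon[W_\pm]\in\Gamma_3$ uniformly on $Q_2$, with $\varepsilon_1$ independent of $X_0$. This is needed for Lemma~\ref{lem:counter-comparison}, which requires both functions admissible up to the boundary, including the corners where the caps meet $\{V=0\}$. I would check it through the block formula together with the $\sigma_1,\sigma_2$ analogues: $\sigma_k$ of the block matrix equals $\sigma_k(\theta A)+O(\varepsilon^2)$, and $\sigma_k(\theta A)\ge c>0$ uniformly on compact $Q_2$, since $s$ is bounded away from $0$ there. A secondary point is that comparison must be applied on a domain whose boundary is only Lipschitz; Lemma~\ref{lem:counter-comparison} only needs boundedness and $C^2(\overline D)$ regularity, which $U_\varepsilon$ and the barriers have.
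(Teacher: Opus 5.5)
Your proposal is correct and is essentially the paper's proof. The paper uses the same barriers $(1\pm\delta_\varepsilon)V\mp A_0(X-X_0)^2$ on the short cylinder $D_{X_0}$, with $\delta_\varepsilon=K_0\varepsilon^2$ and a single $A_0$ chosen so that $A_0r^2\ge C_0+M_V+1$, then applies the identity of Lemma~\ref{lem:counter-barrier-identity}, proves admissibility by perturbing from $\operatorname{diag}(\theta A,0)$, and uses comparison after the linear change of variables. The only difference is cosmetic: the paper imposes $\delta_\varepsilon\le\frac12$ and uses explicit bounds such as $(1+\delta_\varepsilon)^3\le\frac{27}{8}$ to avoid $O(\varepsilon^4)$ remainders, whereas you absorb those remainders by choosing $\varepsilon_1$ after fixing $K$, which is equally valid.
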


\begin{proof}
We choose the constants in a noncircular order.  First fix the global
bound \(C_0\) from Lemma~\ref{lem:counter-coarse-bound}, and choose
\(A_0>0\) so that
\begin{equation}\label{eq:counter-A0-choice}
  A_0r^2\geq C_0+M_V+1.
\end{equation}
Next choose \(K_0>0\) sufficiently large that
\begin{equation}\label{eq:counter-K0-choice}
  \frac32K_0
  \geq
  E_0+2A_0S_0+1,
  \qquad
  3K_0
  \geq
  \frac{27}{8}E_0+\frac92A_0S_0+1.
\end{equation}
Only after \(A_0\) and \(K_0\) have been fixed do we shrink
\(\varepsilon_1\) so that, for \(0<\varepsilon<\varepsilon_1\),
\begin{equation}\label{eq:counter-deltaeps}
  \delta_\varepsilon
  :=
  K_0\varepsilon^2
  \leq\frac12,
\end{equation}
and all barriers below are \(\Gamma_3\)-admissible.

The last assertion follows uniformly from compactness.  At
\(\varepsilon=0\), the barrier matrices have the form
\[
  \operatorname{diag}(\theta A,0),
  \qquad
  \frac12\leq\theta\leq\frac32.
\]
Since \(A>0\) and \(\det A=1\), their first three elementary symmetric
functions have uniform positive lower bounds on the compact set
\(Q_2\).  The actual barrier matrices are a uniformly small
\(\varepsilon\)-perturbation of this compact family, and hence remain
in the open cone \(\Gamma_3\) when \(\varepsilon_1\) is sufficiently
small.

Fix \(X_0\in\overline I_1\) and consider the artificial short cylinder
\begin{equation}\label{eq:counter-short-cylinder}
  D_{X_0}
  =
  \{(X,y):|X-X_0|<r,\ V(X,y)<0\}
  \subset Q_2.
\end{equation}
Define
\begin{align}
  \underline W
  &=
  (1+\delta_\varepsilon)V-A_0(X-X_0)^2,
  \label{eq:counter-lower-barrier}\\
  \overline W
  &=
  (1-\delta_\varepsilon)V+A_0(X-X_0)^2.
  \label{eq:counter-upper-barrier}
\end{align}
On the genuine side boundary \(V=0\),
\eqref{eq:counter-central-channel} gives \(U_\varepsilon=0\), and
hence
\[
  \underline W\leq U_\varepsilon\leq\overline W.
\]
On the artificial ends \(|X-X_0|=r\), because \(V\leq0\),
\[
  \underline W
  \leq
  -A_0r^2
  \leq
  -C_0
  \leq
  U_\varepsilon,
\]
while
\[
  \overline W
  \geq
  V+A_0r^2
  \geq
  -M_V+A_0r^2
  \geq
  0
  \geq
  U_\varepsilon.
\]
Thus the boundary ordering holds.

The exact identity
\eqref{eq:counter-barrier-identity} gives
\[
  \sigma_3(\mathcal H_\varepsilon[\overline W])
  =
  (1-\delta_\varepsilon)^3
  (1+\varepsilon^2\mathcal E)
  +
  2A_0(1-\delta_\varepsilon)^2
  \varepsilon^2\sigma_2(A),
\]
and
\[
  \sigma_3(\mathcal H_\varepsilon[\underline W])
  =
  (1+\delta_\varepsilon)^3
  (1+\varepsilon^2\mathcal E)
  -
  2A_0(1+\delta_\varepsilon)^2
  \varepsilon^2\sigma_2(A).
\]
For \(0\leq\delta_\varepsilon\leq1/2\),
\[
  (1-\delta_\varepsilon)^3
  \leq
  1-\frac32\delta_\varepsilon,
  \qquad
  (1+\delta_\varepsilon)^3
  \geq
  1+3\delta_\varepsilon,
\]
and
\[
  (1+\delta_\varepsilon)^3\leq\frac{27}{8},
  \qquad
  (1+\delta_\varepsilon)^2\leq\frac94.
\]
Using \eqref{eq:counter-K0-choice}, we obtain
\[
  \begin{split}
  \sigma_3(\mathcal H_\varepsilon[\overline W])
  &\leq
  1+
  \varepsilon^2
  \left(
    -\frac32K_0+E_0+2A_0S_0
  \right)
  \leq1,
  \end{split}
\]
and
\[
  \begin{split}
  \sigma_3(\mathcal H_\varepsilon[\underline W])
  &\geq
  1+
  \varepsilon^2
  \left(
    3K_0-\frac{27}{8}E_0-\frac92A_0S_0
  \right)
  \geq1.
  \end{split}
\]

Apply the linear change of variables
\[
  (X,y)\longmapsto(X/\varepsilon,y)
\]
to \(D_{X_0}\) and to the three functions.  Under this map,
\(\mathcal H_\varepsilon\) becomes the ordinary physical Hessian.
Lemma~\ref{lem:counter-comparison}, applied on the physical image,
therefore gives
\[
  \underline W\leq U_\varepsilon\leq\overline W
  \qquad\text{in }D_{X_0}.
\]
At \(X=X_0\),
\[
  (1+\delta_\varepsilon)V(X_0,y)
  \leq
  U_\varepsilon(X_0,y)
  \leq
  (1-\delta_\varepsilon)V(X_0,y).
\]
Since \(V\leq0\),
\[
  |U_\varepsilon(X_0,y)-V(X_0,y)|
  \leq
  \delta_\varepsilon(-V(X_0,y))
  \leq
  K_0M_V\varepsilon^2.
\]
As \(X_0\in\overline I_1\) is arbitrary,
\eqref{eq:counter-channel-estimate} follows with
\[
  C=K_0M_V.
\]
\end{proof}

\subsection{A nonconvex interior level}\label{sec:midpoint}

Set
\begin{equation}\label{eq:counter-cstar}
  c_*=-\frac3{125}.
\end{equation}
On \(X=0\) and \(z=0\), the equation \(V=c_*\) is equivalent to
\[
  \frac{s^4}{12}-\frac{s}{3}+\frac1{100}
  =
  -\frac3{125},
\]
which is precisely \eqref{eq:counter-bad-quartic}.  By
Lemma~\ref{lem:counter-signs}, there is a unique
\[
  \frac1{10}<s_*<\frac{21}{200}
\]
solving this equation, and \(J(s_*)<0\).  Let
\[
  q_*=(X,s,z_1,z_2)=(0,s_*,0,0).
\]
At this point,
\[
  V_X(q_*)=1,
  \qquad
  V_s(q_*)=\frac{s_*^3-1}{3}.
\]
Consequently,
\begin{equation}\label{eq:counter-bad-tangent}
  \tau
  =
  \left(
    \frac{s_*^3-1}{3},-1,0,0
  \right)
\end{equation}
is tangent to the level \(\{V=c_*\}\), and
\begin{equation}\label{eq:counter-negative-tangent}
  D^2V(q_*)[\tau,\tau]
  =
  J(s_*)<0.
\end{equation}

\begin{lemma}[A midpoint gap from negative tangential curvature]
\label{lem:counter-midpoint}
Let \(f\in C^3\), let \(f(q)=c\), and suppose that
\(\nabla f(q)\neq0\).  If there is a vector \(\tau\neq0\) such that
\[
  \nabla f(q)\cdot\tau=0,
  \qquad
  D^2f(q)[\tau,\tau]=-\kappa<0,
\]
then every sufficiently small neighborhood of \(q\) contains points
\(q_+,q_-\) and a number \(\gamma>0\) such that
\begin{equation}\label{eq:counter-midpoint-abstract}
  f(q_\pm)\leq c-4\gamma,
  \qquad
  f\left(\frac{q_++q_-}{2}\right)\geq c+4\gamma.
\end{equation}
\end{lemma}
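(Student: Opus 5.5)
We will use Taylor's formula at \(q\) along the tangent direction \(\tau\), corrected by a small multiple of the normal direction \(\nu:=\nabla f(q)/|\nabla f(q)|\). For small \(t>0\) put
\[
  q_\pm=q\pm t\tau-\beta t^2\nu,
  \qquad
  m=\frac{q_++q_-}{2}=q-\beta t^2\nu,
\]
where \(\beta>0\) is a constant to be fixed. Both points approach \(q\) as \(t\downarrow0\), so they lie in any prescribed neighborhood of \(q\).

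First we expand \(f(q_\pm)\) to second order, with a \(C^3\) remainder that is uniform on a fixed compact ball. Since \(\nabla f(q)\cdot\tau=0\),
\[
  f(q_\pm)=c-\beta t^2|\nabla f(q)|+\frac{t^2}{2}D^2f(q)[\tau,\tau]+O(t^3)
  =c-t^2\Bigl(\beta|\nabla f(q)|+\frac\kappa2\Bigr)+O(t^3).
\]
The terms that are odd in \(t\) have the same size for both signs, and the mixed Hessian terms involving \(\nu\) are \(O(t^3)\). For the midpoint,
\[
  f(m)=c-\beta t^2|\nabla f(q)|+O(t^4).
\]

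The key step is choosing \(\beta\) so that the midpoint lies strictly above level \(c\) while the endpoints lie strictly below. As written, \(f(m)\) is below \(c\), so the sign of the normal shift must be reversed: take
\[
  q_\pm=q\pm t\tau+\beta t^2\nu .
\]
Then
\[
  f(q_\pm)=c+t^2\Bigl(\beta|\nabla f(q)|-\frac\kappa2\Bigr)+O(t^3),
  \qquad
  f(m)=c+\beta t^2|\nabla f(q)|+O(t^4).
\]
Choose \(\beta=\kappa/(4|\nabla f(q)|)\), which is possible because \(\nabla f(q)\neq0\). This gives
\[
  f(q_\pm)=c-\frac{\kappa}{4}t^2+O(t^3),
  \qquad
  f(m)=c+\frac\kappa4t^2+O(t^4).
\]

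Finally set \(\gamma=\kappa t^2/32\). Taking \(t\) small enough that the remainders are below \(\kappa t^2/8\) in absolute value, we get \(f(q_\pm)\le c-\kappa t^2/8=c-4\gamma\) and \(f(m)\ge c+\kappa t^2/8=c+4\gamma\). This is \eqref{eq:counter-midpoint-abstract}.

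The only delicate points are the sign bookkeeping in the normal correction and making the \(O(t^3)\) constants uniform. The latter follows from the \(C^3\) bound of \(f\) on a closed ball around \(q\).
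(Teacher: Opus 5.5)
Your proof is correct. The final choices $\beta=\kappa/(4|\nabla f(q)|)$ and $\gamma=\kappa t^2/32$ give exactly the required margins. In a final write-up, simply delete the abandoned first ansatz with $-\beta t^2\nu$.

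Your route differs slightly from the paper's. Write $g=|\nabla f(q)|$ and let $\nu$ be the unit normal, as in your notation.

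\emph{The paper's route.} It first uses the implicit function theorem to produce two points $r_\pm(h)=q\pm h\tau+\beta_\pm(h)\nu$ lying exactly on $\{f=c\}$, with $\beta_\pm(h)=\frac{\kappa}{2g}h^2+O(h^3)$. It then shifts both points by $-\frac{\kappa}{8g}h^2\nu$. After this shift the endpoints drop to level $c-\frac{\kappa}{8}h^2+O(h^3)$, while the midpoint sits at level $c+\frac{3\kappa}{8}h^2+O(h^3)$.

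\emph{Your route.} You skip the level-set points and write down the parabolic ansatz $q\pm t\tau+\beta t^2\nu$ directly. So you need only a second-order Taylor expansion at $q$; $C^2$ with an $o(t^2)$ remainder would already suffice.

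\emph{What each buys.} The paper's version is more geometric: it exhibits a chord of $\{f=c\}$ whose midpoint lies in $\{f>c\}$, which is what negative tangential curvature means. Yours is shorter, gives explicit constants, and avoids the implicit function theorem.
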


\begin{proof}
Put
\[
  g=|\nabla f(q)|,
  \qquad
  n=\frac{\nabla f(q)}{g}.
\]
By the implicit function theorem, for all sufficiently small \(h\)
there are two points on the same level,
\[
  r_\pm(h)
  =
  q\pm h\tau+\beta_\pm(h)n,
  \qquad
  f(r_\pm(h))=c.
\]
Taylor expansion, using
\(\nabla f(q)\cdot\tau=0\) and
\(D^2f(q)[\tau,\tau]=-\kappa\), yields
\[
  \beta_\pm(h)
  =
  \frac{\kappa}{2g}h^2+O(h^3).
\]
Let
\[
  \rho_0=\frac{\kappa}{8g}
\]
and define
\[
  q_\pm=r_\pm(h)-\rho_0h^2n.
\]
Expanding from the same-level points \(r_\pm(h)\), we obtain
\[
  f(q_\pm)
  =
  c-\frac{\kappa}{8}h^2+O(h^3).
\]
On the other hand,
\[
  \frac{q_++q_-}{2}
  =
  q+
  \left(
    \frac{\kappa}{2g}-\frac{\kappa}{8g}
  \right)h^2n
  +O(h^3)
  =
  q+\frac{3\kappa}{8g}h^2n+O(h^3),
\]
and hence
\[
  f\left(\frac{q_++q_-}{2}\right)
  =
  c+\frac{3\kappa}{8}h^2+O(h^3).
\]
Choose \(h>0\) sufficiently small.  Both displayed margins are then
strictly positive.  Choosing \(\gamma\) smaller than one quarter of
the smaller margin proves \eqref{eq:counter-midpoint-abstract}.
\end{proof}

Apply Lemma~\ref{lem:counter-midpoint} to
\[
  f=V,
  \qquad
  q=q_*,
  \qquad
  c=c_*.
\]
Since \(c_*<0\), the points may be chosen sufficiently close to
\(q_*\) that they and their midpoint lie in the fixed central channel
\[
  \{X\in I_1,\ V<0\}.
\]
Thus there are fixed points \(q_+,q_-\), their midpoint
\[
  m=\frac{q_++q_-}{2},
\]
and a number \(\gamma>0\) such that
\begin{equation}\label{eq:counter-fixed-gap}
  V(q_\pm)\leq c_*-4\gamma,
  \qquad
  V(m)\geq c_*+4\gamma.
\end{equation}
The points and \(\gamma\) are independent of \(\varepsilon\).

\subsection{Completion of the counterexample}\label{sec:counterexample-proof}

\begin{proof}[Proof of Theorem~\ref{thm:main-E}]
The smoothness, boundedness, and uniform convexity of
\(\Omega_\varepsilon\) follow from
Lemma~\ref{lem:counter-closing} and the invertible linear stretch
\eqref{eq:counter-stretched-domain}.  Classical solvability
\cite{caffarelli1985dirichlet} provides the unique smooth admissible solution
\(u_\varepsilon\).

Apply Theorem~\ref{thm:counter-channel} to the three fixed points in
\eqref{eq:counter-fixed-gap}.  For all sufficiently small
\(\varepsilon\),
\[
  \|U_\varepsilon-V\|_{L^\infty}<\gamma
\]
on a fixed compact central channel containing these points.  Therefore
\[
  U_\varepsilon(q_\pm)\leq c_*-3\gamma,
  \qquad
  U_\varepsilon(m)\geq c_*+3\gamma.
\]
It follows that
\[
  q_+,q_-\in\{U_\varepsilon<c_*\},
  \qquad
  \frac{q_++q_-}{2}=m\notin\{U_\varepsilon<c_*\}.
\]
Thus the slow-coordinate sublevel set is nonconvex.

Let
\[
  T_\varepsilon^{-1}(X,y)
  =
  (X/\varepsilon,y)
\]
be the map from slow to physical coordinates, and set
\[
  p_\pm=T_\varepsilon^{-1}(q_\pm),
  \qquad
  p_m=T_\varepsilon^{-1}(m).
\]
This is an invertible real linear map, so it preserves line segments
and midpoints:
\[
  p_m=\frac{p_++p_-}{2}.
\]
Moreover,
\[
  p_\pm\in\{u_\varepsilon<c_*\},
  \qquad
  p_m\notin\{u_\varepsilon<c_*\}.
\]
Hence
\[
  \left\{
    u_\varepsilon<-\frac3{125}
  \right\}
\]
is nonconvex.
\end{proof}

\begin{proof}[Proof of Corollary~\ref{thm:main-F}]
Let $c_*=-3/125$. Since $\Psi$ is strictly increasing on the range of $u$,
\[
\{\Psi(u)<\Psi(c_*)\}=\{u<c_*\}.
\]
The set on the right is nonconvex by Theorem~\ref{thm:main-E}. If $\Psi\circ u$ were convex, all of its sublevel sets would be convex, which is impossible. Taking $\Psi(s)=-(-s)^\alpha$ gives the final assertion for every $\alpha>0$.
\end{proof}

\begin{remark}
\label{rem:counter-scope}
Each fixed domain \(\Omega_\varepsilon\) is smooth, bounded, and
uniformly convex, which is exactly what is needed to disprove an
unconditional extension of Theorem~\ref{thm:mainssss}.  The family is, however, increasingly elongated as \(\varepsilon\downarrow0\), and
its quantitative uniform-convexity and eccentricity constants are not
uniform in \(\varepsilon\).  The example therefore does not rule out
positive convexity results under additional quantitative geometric
hypotheses imposed uniformly on the domain.  Nor does it make a claim
about the principal \(3\)-Hessian eigenfunction or its
Brunn--Minkowski theory.
\end{remark}
\section*{Acknowledgments}

Jiahuan Li and Xi-Nan Ma were supported by the National Key R\&D
Program of China (Grant No.\ 2025YFA1017601).   Guohuan Qiu was supported by the National Natural Science Foundation of China (Grant No. 12571227). Paolo Salani was
partially supported by INdAM through GNAMPA and by the project
\emph{Geometric-Analytic Methods for PDEs and Applications} (GAMPA),
funded by the European Union--NextGenerationEU under the PRIN 2022
program (D.D.\ 104, 02/02/2022, Ministero dell'Universit\`a e della
Ricerca).

\small
\bibliographystyle{plain}
\bibliography{reference}

\end{document}